\newtheorem{thm}{Theorem}[section]
\newtheorem{lem}[thm]{Lemma}
\newtheorem{cor}[thm]{Corollary}
\newtheorem{definition}{Definition}
\newcommand{\half}{\sfrac{1}{2}}
\renewcommand{\le}{\leqslant} 
\renewcommand{\ge}{\geqslant}
\newcommand{\ra}{\rangle}
\newcommand{\la}{\langle}
\providecommand{\1}{\mathds{1}}
\newcommand{\eps}{\varepsilon}
\newcommand{\norm}[1]{\bigl\Vert#1\bigr\Vert}
\newcommand{\abs}[1]{\bigl\vert#1\bigr\vert}
\newcommand{\Real}{\mathds{R}}
\newcommand{\ie}{\emph{i.e.,}}
\newcommand{\equald}{\stackrel{\mathrm{d}}{=}}
\newcommand{\tod}{\stackrel{\mathrm{(d)}}{\longrightarrow}}
\newcommand{\toP}{\stackrel{\mathrm{P}}{\to}}
\def\qed{ \hfill $\blacksquare$}  
\let\ga=\alpha \let\gb=\beta \let\gc=\gamma \let\gd=\delta 
     \let\gl=\lambda      \let\go=\omega   \let\gs=\sigma
\let\gO=\Omega         \let\gS=\Sigma  
\newcommand{\cE}{\mathcal{E}}\newcommand{\cF}{\mathcal{F}}
\newcommand{\cG}{\mathcal{G}}\newcommand{\cH}{\mathcal{H}}
\newcommand{\cM}{\mathcal{M}}\newcommand{\cN}{\mathcal{N}}
\newcommand{\cR}{\mathcal{R}}
\newcommand{\cV}{\mathcal{V}}\newcommand{\cX}{\mathcal{X}}
\newcommand{\mv}[1]{\boldsymbol{#1}}
\newcommand{\mva}{\boldsymbol{a}}
\newcommand{\mvi}{\boldsymbol{i}}
\newcommand{\fm}{\mathfrak{m}}
\newcommand{\dN}{\mathds{N}}
\newcommand{\dR}{\mathds{R}}
\newcommand{\dT}{\mathds{T}}
\newcommand{\dZ}{\mathds{Z}} 
\newcommand{\sR}{\mathscr{R}}
\DeclareMathOperator{\E}{\mathds{E}}
\DeclareMathOperator{\pr}{\mathds{P}}
\renewcommand{\th}{ {\textrm{th}} }
\DeclareMathOperator{\var}{Var}
\DeclareMathOperator{\cov}{Cov}
\DeclareMathOperator{\tr}{Tr} 
\DeclareMathOperator{\sech}{sech}
\DeclareMathOperator{\N}{N}
\DeclareMathOperator{\diag}{Diag}
\newcommand{\ol}[1]{\overline{#1}}
\begin{document}

\title{Disordered Monomer-Dimer model on Cylinder graphs}
\author[Dey]{Partha S.~Dey$^\star$}
\author[Krishnan]{Kesav Krishnan$^\dagger$}

\address{Department of Mathematics, University of Illinois at Urbana Champaign, 1409 W Green Street, Urbana, Illinois 61801}
\email{$^\star$psdey@illinois.edu, $^\dagger$kesavsk2@illinois.edu, }

\subjclass[2020]{Primary: 82B44, 60F05, 60B10; Secondary: 37H15}
\keywords{Disordered systems; Monomer-dimer models; Random dimer activities; Central Limit Theorems}

\begin{abstract}
	We consider the disordered monomer-dimer model on cylinder graphs $\mathcal{G}_n$, \emph{i.e.}, graphs given by the Cartesian product of the line graph on $n$ vertices, and a deterministic graph. The edges carry i.i.d.~random weights, and the vertices also carry i.i.d.~random weights, not necessarily from the same distribution. Given the random weights, we define a Gibbs measure on the space of monomer-dimer configurations on $\mathcal{G}_n$. We show that the associated free energy converges to a limit, and with suitable scaling and centering, satisfies a central limit theorem. We also show that the number of monomers in a typical configuration satisfies a law of large numbers and a central limit theorem with appropriate centering and scaling. Finally, for an appropriate height function associated with a matching, we show convergence to a limiting function and prove the Brownian motion limit about the limiting height function in the sense of finite-dimensional distributions.
\end{abstract}

\maketitle

\section{Introduction}

Given a finite graph, a matching or monomer-dimer configuration is a collection of pairwise vertex-disjoint edges. Edges in the configuration are called dimers and uncovered vertices are called monomers. Monomer-dimer models were introduced almost a century ago by Roberts~\cite{ROB} to study the adsorption of hydrogen and diatomic oxygen on a tungsten surface and similar physical phenomena. Formally, a monomer-dimer configuration on a graph $G$ with vertex set $V$ and edge set $E$ can be considered as a matching, \ie\ a collection of non overlapping edges $\fm \subseteq E$. We will denote the collection of all matchings on $G$ by $\cM=\cM(G)$. With the monomer activity given by $\nu_v$ for vertex $v\in V$ and with dimer activity $\go_e$ associated to edge $e\in E$, we define the Hamiltonian on $\cM$ as
\begin{align}\label{def:H}
	\cH(\fm)= \sum_{v\notin \fm}\nu_v + \sum_{e\in \fm} \go_e,\text{ } \fm \in \cM,
\end{align}
where $v\notin \fm$ means the vertex $v$ is not covered by any of the edges in the matching $\fm$. With the Hamiltonian given in~\eqref{def:H}, we can define the Gibbs measure, or monomer-dimer model as
\begin{align}\label{def:Gibbs}
	\mu(\fm)= Z^{-1} \exp(\cH(\fm)),\text{ }\fm \in \cM
\end{align}
where
\begin{align}\label{def:Z}
	Z=Z_{G}:=\sum_{\fm\in \cM} \exp(\cH(\fm))
\end{align}
is the partition function for the Gibbs measure. One can introduce an inverse temperature parameter $\gb>0$ in the model, but this can be absorbed into the weights for simplicity.

The rigorous analysis for the monomer-dimer model was spearheaded by Heilmann and Lieb~\cite{HL}, essentially via analyzing a natural recurrence associated with the partition functions. Let $G$ be the original graph and $u,v \in V_{G}$. Let $G^{u}$ and $G^{u,v}$ denote the principal subgraphs obtained on the sequential removal of $u$ and $v$. For every vertex $u \in V$, the associated partition functions satisfy a recurrence relation
\begin{align} \label{def:recur}
	Z_{G}=\exp\left(\nu_u\right)Z_{G^{u}} +\sum_{v\sim u} \exp\left(\go_{(u,v)}\right)Z_{G^{u,v}}.
\end{align}

With constant monomer weights $x$, the partition function is a polynomial in $e^x$, the monomer fugacity. Lee-Yang zeroes are the roots of this polynomial. Heilmann and Lieb showed that these zeroes are strictly imaginary and exist in conjugate pairs. The recurrence~\eqref{def:recur} implies an interlacing condition for the roots, which will be discussed further in Section~\ref{sec:LeeYang}. They used the interlacing and localization result for the zeroes to prove the absence of phase transition. In particular, it implies that the limiting free energy is an analytic function in $x$, assuring the convergence of all the cumulants. For problems of statistical physics, exact solvability means the explicit computation of the associated partition function. Heilmann and Lieb wrote down exact solutions for the one-dimensional graph with free or periodic boundary conditions, the complete graph, and the Bethe lattice, all with constant weights. One can write the partition function $Z_n$ as a weighted adjacency matrix determinant in the one-dimensional case. We will briefly discuss this in Section~\ref{sec:Jacobi}. Using the analysis from~\cite{HL}, Godsil~\cite{CDG} showed that for certain graphs where the variance of the number of edges goes to infinity faster than a specific rate, the number of edges could be scaled appropriately to show Gaussian fluctuations. Lebowitz et.~al.~in~\cite{LEB} extended this result further for a family of probability measures whose partition functions are graph counting polynomials, who in addition prove a local central limit theorem using localization of Lee-Yang zeroes. The notion of Lee-Yang zeroes was introduced in~\cite{LEE1}. Their use to study phase transition behavior and fluctuations is well known, for instance, see~\cite{BIKS}. In particular, for their use to prove central limit theorems, we refer the reader to~\cite{CLT}. 

For the pure dimer or perfect matching model (with no monomers) on planar graphs, exact solutions were previously found by Kasteleyn~\cite{KAST1} and Fisher and Temperley~\cite{TF}, giving rise to Kasteleyn theory~\cite{Kast2}. Kasteleyn proved that the partition function for the dimer model on a surface graph, \emph{i.e.}, a graph embedded in a surface of genus $g$, could be written as the sum of $2^{2g}$ Pfaffians. Using a determinantal kernel for planar bipartite graphs, one can evaluate the partition function and the correlation function between edges. Moreover, one can define a height function associated with a dimer configuration, uniquely defined up to a constant. In~\cite{Rken1}, Kenyon proved convergence to a limiting height function and Gaussian free field fluctuations with Dirichlet boundary conditions about the limiting surface. The dimer model in 2-dimensional lattices is intimately connected to the study of spin structures and the analysis of free fermions~\cite{SPIN}. This result is related to the fact that there is a one-to-one correspondence between monomer-dimer models at non-zero monomer fugacity and the Ising model at a non-zero external field as explored in~\cite{HL}.

In a series of works~\cite{AC2, AC1, AC3}, the authors analyzed the monomer-dimer model in the mean-field setting, that is, where the vertices are exchangeable. In~\cite{AC1}, the authors obtained an exact solution for the model on locally tree-like graphs, such as Galton-Watson Tree and sparse Erd\"os-Renyi random graph. Their methods employ a rigorous version of a statistical mechanics technique known as the cavity method. When the size of the vertex set is large, the removal of a vertex essentially yields a copy of the system. The authors used this idea to prove the convergence of the mean free energy and the limiting monomer density in terms of a distributional fixed point equation solution. In~\cite{AC3}, a solution for the complete graph with constant edge weights and i.i.d.~random vertex weights was given in terms of a fixed point. The analysis in~\cite{AC3} proceeds via the Gaussian integral representation of the partition function. In the mean-field case where the edge weights are the same constant, the problem reduces to computing a one-dimensional integral involving a polynomial of Gaussian variables. The asymptotics can be evaluated using Laplace's method. Finally, in~\cite{AC2}, the authors introduce an imitative potential, an attractive interaction between pairs of adjacent dimers (or pairs of adjacent monomers), which breaks the symmetry of being able to swap a dimer and adjacent monomer and preserving the Hamiltonian and thus induces a phase transition. They characterize the limiting free energy via a variational principle and evaluate the phase diagram with respect to the monomer fugacity and the potential strength.

The problem of counting monomer-dimer configurations is of interest to the computer science community as well. In the pure dimer model on bipartite surface graphs, the Kasteleyn determinant formula enables efficient computation. In stark contrast, counting configurations with non-zero monomer fugacity is computationally intractable, see~\cite{MKJ}. On bipartite graphs, the problem of counting the number of monomer-dimer configurations is equivalent to the problem of evaluating the permanent of a matrix with $\{0,1\}$ entries which is known to be in the $\# P$ class \cite{PERM}. However, reasonably quick probabilistic algorithms for approximating the number configurations have been described~in~\cite{KRS}. They define Glauber dynamics over the space of matchings and show that the Markov Chain has a sufficiently fast mixing property; to be close to stationarity in polynomial time. One can view the Random Assignment Problem as a version of the monomer-dimer model, and several constraint satisfiability problems can be studied using this framework.

In this work, we address the monomer-dimer problem in the context of disordered weights on cylinder graphs. We will prove the convergence of the mean free energy and monomer density and establish central limit theorems for both. There are contributions to the fluctuations from both the ensemble and the environment for which we establish bounds and explicitly characterize the monomer density. Finally, we show that the spatial monomer density displays white noise fluctuations at the level of finite-dimensional distributions.

\subsection{Model}

As stated in the introduction, we will be working on cylinder graphs. We now make this notion precise.

\begin{definition}
	Let $H=(V_{H},E_{H})$ be a fixed graph with $|V_{H}|=h$ and $G_n$ be the line graph on $n$ vertices. A cylinder graph $\cG_n$ is given by the graph Cartesian product
	\begin{align}
		\cG_n:=G_n\times H \label{def:Cylgraph}
	\end{align}
	with vertex set $V_{\cG_n}=V_{G_n}\times V_{H}$ and the adjacency relation given by $u=(i,j)\sim v=(i',j')$ if either $i\sim i'$ in $G_n$ and $j=j'$ or $i=i'$ and $j\sim j'$ in $H$.
\end{definition}

For simplicity, we will use $V$ for $V_{\cG_n}$ and $E=E_{\cG_n}$. Given a vertex $u=(i,j)\in V$, $i$ and $j$ will be referred to as the $G_n$ and $H$ components of $u$, respectively. We will denote the total number of vertices in the cylinder graph by
\begin{align}\label{def:N}
	N:=|V_{\cG_n}|=nh.
\end{align}

We will work with the following scheme of weights. The weight associated to the vertex $v \in V$ is given by $\nu_v$ where $\{\nu_v\}_{v\in V}$ are i.i.d.~real-valued random variables. Similarly, the weight associated to $e\in E$ is given by $\go_e$ where $\{\go_e\}_{e\in E}$ are i.i.d.~and real-valued, and independent of the $\nu_v$'s though not necessarily with the same distribution. Unless explicitly mentioned, we will work with the assumption that $\E(|\nu_v|^{2+\eps} + |\go_e|^{2+\eps})$ is finite for some $\eps>0$. With these choice of weights, we define the Hamiltonian $\cH$ for the model as described in~\eqref{def:H}, the corresponding Gibbs measure as in~\eqref{def:Gibbs} and finally the partition function $Z_n:=Z_{\cG_n}$ as in~\eqref{def:Z}.

We are also interested in the behavior of a typical matching $\fm$ chosen from the Gibbs measure $\mu$, on specific sections of the graph corresponding to given ranges of $G_n$--components. We define the \emph{restricted partition functions} as follows.

\begin{definition}[Restricted Partition Function]\label{def:RestZ}
	Let $\cG_{[k:l]}$ denote the principal subgraph of $\cG_n$ generated by the vertices with $G_n$ components in the interval $[k,l]$. The restricted partition function $Z_{[k:l]}$ is defined as the partition function of the monomer-dimer model on $\cG_{[k:l]}$.
\end{definition}

\subsection{Main results}

Here we state and briefly explain the main results about the model described in the previous section. 
\subsubsection{Limit theorems for the free energy}
As is typical for problems in statistical physics, we begin with the analysis of the mean free energy. Due to the disordered environment, the free energy has random fluctuations, which we aim to characterize.

\begin{thm}[Mean and Variance for the log-partition function] \label{thm:Zmeanvar}
	Let $Z_n$ be the partition function as defined in~\eqref{def:Z}. Assume that $\E (\nu_v^{2}+\go_e^{2})<\infty$. There exist constants $f\in\dR, \gs_F\in (0,\infty)$ depending on the distributions of $\go$ and $\nu$, such that
	\begin{align*}
		n^{-1} \log Z_n \toP f \qquad \text{ and }\qquad n^{-1} \var(\log Z_n) \to \gs_{F}^2 \text{ as } n\to \infty.
	\end{align*}
\end{thm}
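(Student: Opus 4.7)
The plan is to encode $Z_n$ via a transfer-matrix representation that exploits the product structure $\cG_n = G_n\times H$ with fixed $|V_H|=h$. For each subset $S\subseteq V_H$, let $\phi_k(S)$ denote the restricted partition function over matchings of $\cG_{[1:k]}$ in which the vertices $\{k\}\times S$ are unmatched and contribute no monomer weight. Elementary bookkeeping shows the vector $\phi_k = (\phi_k(S))_{S\subseteq V_H}\in \dR_+^{2^h}$ satisfies a recursion $\phi_{k+1} = T_{k+1}\phi_k$ for i.i.d.\ random $2^h\times 2^h$ matrices $T_k$ with strictly positive entries (since $e^{\nu_v}, e^{\go_e}>0$), where $T_k$ depends only on the column-$k$ weights and the horizontal edge weights between columns $k-1$ and $k$; in particular $Z_n = \phi_n(\eset) = \la \ve_{\eset}, T_n T_{n-1}\cdots T_1 \phi_0\ra$ for a deterministic $\phi_0$.

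\textbf{Step 1 (convergence of the mean).} I first prove $a_n := \E \log Z_n$ is approximately additive. The lower bound $Z_n \geq Z_{[1:k]}\cdot Z_{[k+1:n]}$ comes from restricting to matchings that avoid the $h$ horizontal edges $W_k$ joining columns $k$ and $k+1$. For the matching upper bound I decompose a matching $\fm \in \cM(\cG_n)$ according to the subset $A \subseteq W_k$ of boundary dimers it uses, and iterate the Heilmann--Lieb consequence $Z_{G^u}\leq e^{-\nu_u}Z_G$ of \eqref{def:recur} at each endpoint of $A$; this yields $Z_n \leq Z_{[1:k]}\cdot Z_{[k+1:n]} \cdot B_k$, where $B_k$ depends only on the weights of $W_k$ and the adjacent $\nu$-weights and satisfies $\E|\log B_k|\leq C(h)$. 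Fekete's lemma then gives $n^{-1}a_n\to f\in\dR$.

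\textbf{Step 2 (variance bound and convergence in probability).} Let $\cF_k$ be the $\sigma$-algebra generated by all weights associated with columns $1,\dots,k$, and set $D_k := \E[\log Z_n\mid \cF_k] - \E[\log Z_n\mid \cF_{k-1}]$, so that $\log Z_n - a_n = \sum_{k=1}^n D_k$ and $\var(\log Z_n) = \sum_k \E D_k^2$. The transfer-matrix representation yields a one-column sensitivity bound: replacing the column-$k$ data by an i.i.d.\ copy (equivalently $T_k \mapsto T_k'$) changes $\log Z_n$ by at most $\log\|T_k\|+\log\|T_k'\|+O(1)$, since Birkhoff's theorem for positive matrices forces the right-hand vector $T_{k-1}\cdots T_1\phi_0$ to enter only as an $O(1)$ additive shift in log-scale. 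Combined with $\E|\log\|T_k\||^{2+\eps}<\infty$, which follows immediately from the $(2+\eps)$-moment hypothesis on $\nu,\go$, an Efron--Stein-type estimate gives $\sup_{n,k}\E D_k^2\leq C$, hence $\var(\log Z_n)=O(n)$, and $n^{-1}\log Z_n\toP f$ follows from Chebyshev.

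\textbf{Step 3 (exact variance asymptotics and main obstacle).} Birkhoff contraction also implies that the projective chain $\pi_k := \phi_k/\|\phi_k\|_1$ is geometrically ergodic on the $(2^h-1)$-simplex with a unique stationary law $\pi_\infty$. Telescoping $\log\|\phi_{k+1}\|_1 - \log\|\phi_k\|_1 = \Phi(\pi_k, T_{k+1})$, with $\Phi(\pi,T):=\log\|T\pi\|_1$, gives $\log Z_n = \sum_{k=0}^{n-1}\Phi(\pi_k, T_{k+1}) + O(1)$. Started from stationarity, $X_k := \Phi(\pi_{k-1}, T_k)$ is strictly stationary with exponentially decaying covariances, and Gordin's martingale--coboundary approximation yields
\begin{align*}
n^{-1}\var\!\left(\sum_{k=1}^n X_k\right)\ \longrightarrow\ \sigma_F^2\ :=\ \var(X_1) + 2\sum_{j\geq 1}\cov(X_1,X_{1+j})\ \in\ [0,\infty),
\end{align*}
whence $n^{-1}\var(\log Z_n)\to \sigma_F^2$. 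The main obstacle is this final step: I need a quantitative Birkhoff contraction rate that is integrable (not merely finite) in order to obtain summable correlation decay and a finite $\sigma_F^2$, and I must show that the boundary correction from initializing at $\phi_0$ rather than at $\pi_\infty$ is $O_{L^2}(1)$. Positivity of $T_k$ gives qualitative contraction, but quantifying it under only a $(2+\eps)$-moment assumption reduces to a careful tail estimate on the contraction coefficient $\log(\max_{i,j} T_k(i,j)/\min_{i,j} T_k(i,j))$.
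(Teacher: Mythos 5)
Your Step 1 matches the paper's argument (approximate subadditivity of $\E\log Z_n$ with an $L^1$-bounded boundary error, then a Fekete/Hammersley lemma), and a variance upper bound of order $n$ plus Chebyshev is also how the paper obtains $n^{-1}\log Z_n\toP f$ — though the paper gets the bound by Efron--Stein directly on the weights: resampling one edge weight changes $\log Z_n$ by at most $|\go_e-\go_e'|$ because $Z_n=\ga_e+\gb_e e^{\go_e}$ is affine in $e^{\go_e}$, so no transfer matrices or Birkhoff contraction are needed. The problems are in Step 3, and they are real. The obstacle you flag — an integrable contraction coefficient, summable correlation decay, and an $O_{L^2}(1)$ boundary correction — is not a deferrable technicality: without it you have no proof that $n^{-1}\var(\log Z_n)$ converges. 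The paper avoids all of this machinery. Writing $\ol{\log Z_n}=\ol{\log Z_{[1:k]}}+\ol{\log Z_{[k+1:n]}}+\ol{R_{n,k}}$, independence of the two blocks together with Cauchy--Schwarz, the uniform $L^2$ bound on $R_{n,k}$, and the $O(n)$ variance bound give $\var(\log Z_n)\le \var(\log Z_k)+\var(\log Z_{n-k})+C\sqrt{n}$; Hammersley's subadditive theorem with error term $b_n=C\sqrt{n}$ (summable against $n^{-2}$) then yields convergence of $n^{-1}\var(\log Z_n)$ with no mixing input whatsoever. You should either carry out the tail estimates you describe or switch to this route. (A secondary flaw: your transfer matrix does not have strictly positive entries — $T_k(S',S)=0$ whenever $S\cap S'\ne\eset$ under the natural identification of the two columns, since a vertex cannot be matched both leftward and rightward — so Birkhoff's theorem applies only after grouping consecutive matrices.)

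Independently of the above, the theorem asserts $\gs_F\in(0,\infty)$ and your argument only delivers $\sigma_F^2\in[0,\infty)$: you never rule out degeneracy. This is a substantive component of the paper's proof. There, one passes to the periodic (torus) version of the graph at the cost of a bounded correction, sets up the Doob martingale in the edge weights of a fixed $H$-layer, uses edge-transitivity to reduce to a single conditional-expectation function $g_n(\go_1)=\E(\log W_n\mid\go_1)$, and lower-bounds $g_n(\go_1)-g_n(\go_1')$ on the event $\{\go_1>\go_1'\}$ by $(\go_1-\go_1')_+\,\E(U)$ for an explicit bounded random variable $U$ with strictly positive mean, giving $\var(\log Z_n)\ge cn$. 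Some argument of this kind must be added before your proposal proves the statement as written.
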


We can also prove a Gaussian Central limit theorem for the free energy as given below in Theorem~\ref{thm:Zclt}.

\begin{thm}[Central Limit Theorem for the log-partition function]\label{thm:Zclt}
	Assume that $\E(|\nu_v|^{2+\eps}+ |\go_e|^{2+\eps})$ is finite for some $\eps >0$. We have
	\begin{align*}
		n^{-\half}\cdot (\log Z_n - \E\log Z_n) \tod \N(0,\gs_F^2)
		\text{ as } n\to \infty.
	\end{align*}
\end{thm}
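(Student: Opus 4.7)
The plan is to prove Theorem~\ref{thm:Zclt} via the martingale central limit theorem applied to a column-revealing filtration. Let $\xi_k$ denote all random weights localized in column $k$ of $\cG_n$: the vertex weights $\nu_v$ for $v$ with $G_n$-component equal to $k$, the intra-column edge weights inside column $k$, and the $h$ inter-column edge weights connecting columns $k-1$ and $k$. The $\xi_k$ are i.i.d.\ across $k$. Setting $\cF_k=\sigma(\xi_1,\ldots,\xi_k)$ and $D_k=\E[\log Z_n\mid\cF_k]-\E[\log Z_n\mid\cF_{k-1}]$, the identity $\log Z_n-\E\log Z_n=\sum_{k=1}^n D_k$ telescopes into martingale differences, and it suffices to verify (i) the Lyapunov condition $\sum_{k=1}^n\E|D_k|^{2+\eps}=o(n^{1+\eps/2})$ and (ii) the conditional-variance convergence $n^{-1}\sum_{k=1}^n\E[D_k^2\mid\cF_{k-1}]\toP\gs_F^2$.

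For (i), I would use the independent-resampling representation $D_k=\E[\log Z_n-\log\widetilde Z_n^{(k)}\mid\cF_k]$, where $\widetilde Z_n^{(k)}$ is the partition function with $\xi_k$ replaced by an independent copy $\xi_k'$. Resampling shifts each Hamiltonian value $\cH(\fm)$ by at most $C_h(\|\xi_k\|_1+\|\xi_k'\|_1)$ uniformly in $\fm$, where $C_h$ depends only on $h=|V_H|$; hence $|\log Z_n-\log\widetilde Z_n^{(k)}|\leq C_h(\|\xi_k\|_1+\|\xi_k'\|_1)$ deterministically, and so $\E|D_k|^{2+\eps}\leq C$ uniformly in $n,k$ by the $(2+\eps)$-moment hypothesis, which is more than enough for Lyapunov.

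For (ii), the essential ingredient is a finite-range-of-dependence estimate. The partition function admits a transfer-matrix representation $Z_n=\langle u_0,\,T_1T_2\cdots T_n\,u_{n+1}\rangle$, where $T_k=T_k(\xi_k)$ is a $2^h\times 2^h$ matrix with positive entries indexed by subsets of $V_H$ recording which vertices in the current column are already consumed by a dimer extending from the left. Positive random matrix products exhibit exponential contraction in the Hilbert projective metric (Birkhoff), which translates, via the restricted partition functions of Definition~\ref{def:RestZ}, to a decoupling estimate $\log Z_n=\log Z_{[1:k]}+\log Z_{[k+1:n]}+R_k$ with $R_k$ locally measurable and having uniform $L^2$-bounds, and more generally lets one approximate $D_k$ by $D_k^{(L)}$ depending only on $\xi_{k-L},\ldots,\xi_{k+L}$, with error exponentially small in $L$ in $L^2$. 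The truncated sequence $(D_k^{(L)})$ is stationary ergodic away from the boundary, so Birkhoff's ergodic theorem yields $n^{-1}\sum_k\E[(D_k^{(L)})^2\mid\cF_{k-1}]\toP\gamma_L$; combined with Theorem~\ref{thm:Zmeanvar} this forces $\lim_{L\to\infty}\gamma_L=\gs_F^2$, and a standard martingale CLT (e.g.~McLeish) completes the proof.

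The hard part will be the decoupling estimate --- obtaining quantitative exponential mixing for the i.i.d.\ positive random-matrix product $T_1\cdots T_n$ strong enough to yield a remainder $R_k$ with uniform moments in the quenched disorder. This reduces to spectral-gap / Hilbert-contraction properties of $(T_k)$, accessible via Furstenberg--Kesten / Hennion theory once positivity and irreducibility of the $T_k$ are verified from the monomer-dimer recursion~\eqref{def:recur}; it is also essentially the same input needed for the variance limit in Theorem~\ref{thm:Zmeanvar}, so the CLT should come as an incremental addition to that analysis.
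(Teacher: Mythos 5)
Your overall skeleton (martingale differences along a column-revealing filtration, Lyapunov condition from bounded differences, conditional-variance convergence, McLeish) is a legitimate and genuinely different route from the paper, which instead performs a dyadic subdivision of $\cG_n$ into independent blocks, controls the cut errors $R$ and $T$ uniformly in $L^{2+\eps}$ via Lemma~\ref{lem:errbound}, bounds $\norm{\ol{\log Z_n}}_{2+\eps}$ by $C\sqrt n$ with Rosenthal's inequality, and then applies the Lyapunov CLT to the resulting triangular array of i.i.d.\ block free energies. Your step (i) is correct and is essentially the paper's Efron--Stein computation in Lemma~\ref{lem:efron} upgraded to $(2+\eps)$-moments: resampling the column weights shifts every $\cH(\fm)$ by at most $\norm{\xi_k}_1+\norm{\xi_k'}_1$, so the increments have uniformly bounded $(2+\eps)$-moments. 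The paper's design choice is precisely to avoid your step (ii): by cutting into blocks that are exactly independent, it never needs any mixing or decay-of-correlations input, only the a priori variance convergence of Lemma~\ref{lem:zvarconv} (itself obtained by Hammersley subadditivity, not by ergodicity).

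The genuine gap is in step (ii), and it is the load-bearing step. First, a local error: the transfer matrix $T_k$ indexed by subsets of $V_H$ is \emph{not} entrywise positive --- the entry indexed by $(S,S')$ vanishes whenever $S\cap S'\neq\eset$, since a vertex cannot be matched both leftward and rightward. This is repairable (two-step products are strictly positive because $T_k[S,\eset]>0$ and $T_k[\eset,S'']>0$), but it must be said. More seriously, the statement you actually need --- that $D_k$ is approximated in $L^2$ by a function $D_k^{(L)}$ of $\xi_{k-L},\ldots,\xi_{k+L}$, independent of $n$, with error summable (you claim exponential) in $L$, uniformly in $k$ and $n$ --- is asserted, not proved. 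Under only a $(2+\eps)$-moment hypothesis the Birkhoff contraction coefficient of an individual block of transfer matrices is a random variable that can be arbitrarily close to $1$, so the projective contraction is only ``exponential on average''; converting Hennion/Furstenberg--Kesten almost-sure contraction into the uniform quantitative $L^2$ bound on $D_k-D_k^{(L)}$ requires controlling the ratio $\log(Z^{A}_{[1:k]}/Z_{[1:k]})$ and its dependence on distant columns with explicit moment estimates, none of which is supplied. Without that estimate the stationarity of $(D_k^{(L)})$, the ergodic-theorem step, and the identification $\lim_{L}\gamma_L=\gs_F^2$ all remain conditional. As written, the proposal reduces the theorem to an unproved exponential-mixing lemma that is at least as hard as the theorem itself; the paper's block decomposition is exactly the device that makes this lemma unnecessary.
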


\subsubsection{Quenched and Annealed Limit of a typical matching}

The random variable of central importance is the number of unpaired vertices corresponding to a typical matching $\fm$, which we will denote by $U=U(\fm)$. The number of unpaired vertices occurring in the section $\cG_{[k:l]}$ will be denoted by $U_{[k:l]}$.

\begin{definition}\label{def:GibbsAverage}
	Let $\cM$ be the collection of all matchings on the graph $\cG_n$. Let $\mu$ be the Gibbs measure on $\cM$ as defined in~\eqref{def:Gibbs}. Consider a function $X:\cM \to \dR$. The Gibbs average of $X$ with respect to $\mu$, denoted by $\la X\ra_n$, is defined as
	\begin{align*}
		\la X \ra_n:=\sum_{\fm \in \cM} X(\fm) \mu(\fm).
	\end{align*}
\end{definition}

First, we look at the mean and fluctuation behavior of $U$ and $\la U\ra$.

\begin{thm}[Law of Large Numbers for Unpaired Vertices]\label{thm:Mvar}

	Let $U(\fm)$ denote the number of unpaired vertices in a matching $\fm$ chosen from $\mu$. There exists a constant $u\in [0,1]$ depending on the distributions of $\go$ and $\nu$ such that
	\begin{align*}
		n^{-1} \la U\ra_n \toP u \text{ as } n \to \infty.
	\end{align*}
	Moreover, there exist constants $\gs_Q\in (0,\infty)$ and $\gs_{A}\in [0,\infty)$ depending on the distributions of $\go$ and $\nu$ such that
	\begin{align*}
		n^{-1}(\la U^{2}\ra_n-\la U\ra^{2}_n) & \toP \gs_Q^2
		\text{ and }
		n^{-1}{\var \la U\ra _n} \to \gs_{A}^2 \text{ as } n \to \infty.
	\end{align*}
	Moreover, let the edge and vertex weights be compactly supported and $d_{max}$ denote the maximal degree of $\cG_n$. If $\go_e-2\nu_v<-\log d_{max}\text{ a.s.}$,~then
	\begin{align*}
		\gs_{A}>0.
	\end{align*}

\end{thm}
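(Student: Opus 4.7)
The central tool is the tilted partition function
\[
Z_n^{(t)} := \sum_{\fm \in \cM} \exp\bigl(t \, U(\fm) + \cH(\fm)\bigr) = \sum_{\fm} \exp\Bigl(\sum_{v \notin \fm}(\nu_v+t) + \sum_{e \in \fm} \go_e\Bigr),
\]
i.e.\ the partition function of the same disordered model with each monomer weight $\nu_v$ replaced by $\nu_v + t$. Differentiating at $t=0$ recovers the Gibbs moments of $U$: $\partial_t \log Z_n^{(t)}|_{t=0} = \la U\ra_n$ and $\partial_t^2 \log Z_n^{(t)}|_{t=0} = \la U^2\ra_n - \la U\ra_n^2$. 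Applying Theorem~\ref{thm:Zmeanvar} to the shifted system, for each $t$ in a neighborhood of $0$ one gets $n^{-1}\log Z_n^{(t)} \toP f(t)$ for a deterministic $f$. The map $t \mapsto \log Z_n^{(t)}$ is convex (log-sum-exp of affine functions), so $f$ is convex; Heilmann--Lieb Lee--Yang analyticity (Section~\ref{sec:LeeYang}) moreover makes $f$ real-analytic in $t$. A standard lemma (pointwise-in-probability convergence of convex functions to a differentiable deterministic limit entails convergence of one-sided derivatives in probability) then yields $n^{-1}\la U\ra_n \toP f'(0) =: u$, with $u$ in the natural range dictated by $0 \le U \le nh$.

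The quenched variance is handled by the same scheme one order higher. For each $s > 0$, the second divided difference
\[
\frac{\log Z_n^{(s)} + \log Z_n^{(-s)} - 2\log Z_n^{(0)}}{n s^2} \toP \frac{f(s) + f(-s) - 2f(0)}{s^2}
\]
converges in probability, and as $s \downarrow 0$ the right-hand side tends to $f''(0)$ by analyticity. Together with the uniform bound $0 \le n^{-1}\partial_t^2 \log Z_n^{(t)} \le h$ coming from $\var_\mu(U) \le N = nh$, an interchange-of-limits argument gives $n^{-1}(\la U^2\ra_n - \la U\ra_n^2) \toP f''(0) =: \gs_Q^2$. Strict positivity $\gs_Q^2 > 0$ reflects strict convexity of $f$: under non-degenerate i.i.d.~weights the Gibbs measure cannot asymptotically concentrate on matchings of fixed monomer count.

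The annealed variance is more delicate and I would attack it through the cylinder decomposition. Let $\cF_k$ be the $\sigma$-algebra generated by all weights inside $\cG_{[1:k]}$, and form the Doob martingale increments $D_k := \E[\la U\ra_n \mid \cF_k] - \E[\la U\ra_n \mid \cF_{k-1}]$, giving the identity $\var \la U\ra_n = \sum_{k=1}^n \E D_k^2$. The recurrence~\eqref{def:recur} realizes $\la U\ra_n$ as an additive functional of the transfer-matrix Markov chain generated by partition-function ratios across slices of $\cG_n$, and the Heilmann--Lieb Lee--Yang localization supplies exponential decay of correlations in $|k-k'|$. Consequently $\{D_k\}$ is approximately stationary and mixing, so $n^{-1}\sum_k \E D_k^2 \to \gs_A^2$ for some $\gs_A^2 \ge 0$ by a Gordin-type argument for stationary mixing sequences.

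The main obstacle is strict positivity $\gs_A > 0$ under the condition $\go_e - 2\nu_v < -\log d_{max}$ a.s. Rewritten as $e^{\go_e - 2\nu_v} < 1/d_{max}$, this is precisely the Heilmann--Lieb smallness criterion that makes the cluster/polymer expansion around the all-monomer configuration absolutely convergent. Within this radius the one-point function $\la \mathds{1}\{v \notin \fm\}\ra_n$ is an absolutely convergent power series in the surrounding weights, and, crucially, its partial derivative with respect to $\nu_v$ is bounded below by a positive constant uniformly in $n$ and in the realization of the remaining weights. Non-degeneracy of the i.i.d.~weights then forces $\E D_k^2 \ge c > 0$ uniformly in $k$, yielding $n^{-1}\var\la U\ra_n \ge c$ and hence $\gs_A > 0$. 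The technical heart of the proof is exactly this step: converting the cluster-expansion convergence into a \emph{uniform} pathwise lower bound on $\partial_{\nu_v}\la U\ra_n$, independent of $n$ and of the environment outside $v$.
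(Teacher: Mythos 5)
Your starting point (exponential tilting, so that $x$-derivatives of $\log Z_n(x)$ give the cumulants of $U$) is the same as the paper's, but several of your subsequent steps have genuine gaps. The most serious is the quenched variance. Pointwise convergence in probability of the convex functions $n^{-1}\log Z_n^{(t)}$ to $f(t)$, even together with the uniform bound $0\le n^{-1}\partial_t^2\log Z_n^{(t)}\le h$, does \emph{not} imply $n^{-1}\partial_t^2\log Z_n^{(t)}|_{t=0}\to f''(0)$: take $g_n(t)=t^2/2+n^{-2}\cos(nt)$, which is convex, converges uniformly to $t^2/2$, has $g_n''\in[0,2]$, yet $g_n''(0)=0\ne 1$. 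Second derivatives of convex functions can oscillate under the hood of pointwise convergence, so your ``interchange-of-limits argument'' is not available. The paper's actual route is to prove weak convergence (in probability) of the empirical measure $\rho_n$ of the Lee--Yang zeroes --- via tightness (Lemma~\ref{lem:tightness}) plus a uniqueness-of-transform lemma (Lemma~\ref{lem:doublelaplace}) showing that convergence of $\int z/(z+\gl^2)\,d\rho_n$ for all $z>0$ pins down the limit --- and then to read off $n^{-1}\la\widehat U^2\ra_n$ as a bounded continuous linear statistic of $\rho_n$. Relatedly, strict positivity $\gs_Q>0$ is part of the theorem (with no extra hypothesis) and you only assert it (``the Gibbs measure cannot asymptotically concentrate\dots''); the paper proves it by an anti-concentration estimate for the zeroes (Lemma~\ref{lem:zeroanticonc}: a positive fraction of zeroes lie in $[\gd,\infty)$) combined with tightness. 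Even for the first-order LLN your convexity argument needs $f$ to be differentiable at $0$, and your appeal to ``Lee--Yang analyticity'' of the random limit $f$ is itself an unproven claim; the paper instead gets convergence of $n^{-1}\E\la U\ra_n^x$ directly by subadditivity, using the key bound $|\partial_x R_{n,k}|\le C$ on the derivative of the cut error (Lemma~\ref{lem:cutbound}), which is where the interlacing of zeroes actually enters.

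For the annealed part, both of your load-bearing claims are left unproven and neither is how the paper proceeds. You invoke ``exponential decay of correlations'' to run a Gordin-type argument, but no such decay is established anywhere (the paper explicitly avoids correlation analysis, obtaining convergence of $n^{-1}\var\la U\ra_n$ again by subadditivity of $\var(\partial_x\log Z_n)$ through the cut decomposition and Lemma~\ref{lem:cutbound}). For $\gs_A>0$ you correctly identify that the hypothesis $\go_e-2\nu_v<-\log d_{max}$ is a smallness condition and that one needs a uniform single-site influence lower bound, but you defer exactly that bound (``the technical heart'') to an unexecuted cluster expansion. The paper's mechanism is different and concrete: the hypothesis forces the gauge-transformed weighted degrees $\Delta_v$ below $1$, hence localizes all Lee--Yang zeroes in $[-M,M]$ with $M<1$; interlacing then gives the deterministic lower bound $|\partial_x\log Z_1-\partial_x\log Z_2|\ge (e^{2x}-M^2)/(e^{2x}+M^2)$ for removal of two vertices (Lemma~\ref{lem:derlowerbound}), valid at $x=0$ precisely because $M<1$, and this feeds a Doob-martingale/Efron--Stein lower bound on the variance (Lemma~\ref{lem:uannealedvarlb}). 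Without the localization $M<1$ your program has no quantitative input at $x=0$.
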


The exact form of $u$ can be explicitly written in terms of the limiting empirical distribution of the Lee-Yang Zeroes, as given in Section~\ref{sec:LeeYang}. Convergence of the Gibbs average of $U$ also implies convergence of its higher cumulants, particularly the following quenched Central Limit Theorem for $U$.

\begin{thm}[Quenched CLT for $U$]\label{thm:uclt}
	Let $\fm$ be a typical matching chosen from the Gibbs measure $\mu$ as defined in~\eqref{def:Gibbs}. Let $U=U(\fm)$ be the number of unpaired vertices in the matching $\fm$. We have
	\begin{align*}
		n^{-\half} \cdot (U - \la U\ra_n) \tod \N\left(0,\gs^2_Q\right) \text{ as } n \to \infty \text{ in probability. }
	\end{align*}

\end{thm}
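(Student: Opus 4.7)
The plan is to reduce the quenched CLT to a Lindeberg--Feller CLT by exploiting the Heilmann--Lieb theorem: conditionally on the disorder, the law of $U$ under $\mu$ coincides with that of a deterministic integer plus a sum of order $N=nh$ independent bounded random variables, whose parameters are determined by the purely imaginary Lee--Yang zeros of the tilted partition function. The only nontrivial input I will need about the disorder is the variance asymptotic already furnished by Theorem~\ref{thm:Mvar}.

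Concretely, I would define the tilted partition function
\begin{align*}
    Z_n(t) := \sum_{\fm \in \cM} e^{\cH(\fm)+tU(\fm)},
\end{align*}
and set $y := e^t$. Then $Z_n(t)$ is a polynomial in $y$ of degree at most $N$ with strictly positive coefficients. By Heilmann--Lieb (see Section~\ref{sec:LeeYang}), all of its nonzero roots in $y$ are purely imaginary and occur in conjugate pairs $\pm i\theta_j$, so
\begin{align*}
    \la e^{tU}\ra_n \;=\; \frac{Z_n(t)}{Z_n(0)} \;=\; y^{k_n}\prod_{j} \frac{y^2 + \theta_j^2}{1+\theta_j^2},
\end{align*}
where $k_n\ge 0$ is the multiplicity of $0$ as a root (a deterministic shift in $U$ forced by graph parity and matching constraints) and the $\theta_j>0$ are random. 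The right-hand side is the moment generating function of $k_n+\sum_j X_j$, where, conditionally on the disorder, the $X_j$'s are independent with $X_j\in\{0,2\}$ and $\pr(X_j=2)=1/(1+\theta_j^2)$. Hence, under $\mu$, $U$ has the distribution of such a sum.

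Given this representation, I would apply the Lindeberg--Feller CLT conditionally on the disorder. The quenched variance $s_n^2 := \la U^2\ra_n-\la U\ra_n^2 = \sum_j \var(X_j)$ satisfies $s_n^2/n \toP \gs_Q^2 > 0$ by Theorem~\ref{thm:Mvar}, so $s_n\to\infty$ in probability. Since $|X_j|\le 2$ uniformly, the truncation indicators $\ind_{|X_j-\E X_j|>\gee s_n}$ vanish identically once $\gee s_n>4$, and the conditional CLT gives $(U-\la U\ra_n)/s_n \tod \N(0,1)$ quenched; Slutsky applied to $s_n/\sqrt n\toP \gs_Q$ then yields the target statement. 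The cleanest way to package this into the stated ``convergence in distribution in probability'' is via characteristic functions: expanding
\begin{align*}
    \log \la e^{it(U-\la U\ra_n)/\sqrt n}\ra_n = \sum_j \log \E\bigl[e^{it(X_j-\E X_j)/\sqrt n}\bigr]
\end{align*}
to second order and invoking the uniform cubic bound $|X_j|^3\le 8$ yields the right-hand side equal to $-t^2 s_n^2/(2n)+O(n^{-1/2})$, which tends in probability to $-t^2\gs_Q^2/2$ by Theorem~\ref{thm:Mvar}.

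Since the $\theta_j$ are intricate random functions of the disorder, a priori one might worry about controlling them individually. The virtue of the Bernoulli representation is precisely that this is unnecessary: the uniform bound $|X_j|\le 2$, together with the scalar variance asymptotic $s_n^2/n\toP\gs_Q^2$ coming from Theorem~\ref{thm:Mvar}, is already enough to drive the Lindeberg--Feller argument. Accordingly, I expect the main obstacle to already be encapsulated in Theorem~\ref{thm:Mvar}; once that is in hand, the quenched CLT for $U$ follows almost mechanically from the Heilmann--Lieb factorization, with only minor care needed to convert the fixed-disorder CLT into the ``in probability'' formulation via the characteristic-function calculation above.
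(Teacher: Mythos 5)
Your proposal is correct, and it rests on exactly the same two pillars as the paper's proof: the Heilmann--Lieb structure of the Lee--Yang zeroes of the tilted partition function, and the quenched variance convergence $n^{-1}(\la U^2\ra_n-\la U\ra_n^{2})\toP \gs_Q^2>0$ from Theorem~\ref{thm:Mvar}. The route differs in packaging. The paper never writes down the Bernoulli representation: it works directly with the quenched moment generating function, using the identity $\log\la \exp(\xi n^{-\half}\widehat U)\ra_n^x=\log Z(x+n^{-\half}\xi)-\log Z(x)-n^{-\half}\xi\,\partial_x\log Z(x)$, Taylor-expands to second order in $x$, and bounds the third-order remainder by $n^{-\half}|\xi|^3\int|\partial_x^2(1+\gl^2e^{-2x})^{-1}|\,d\rho_n(\gl)$, which is $O(n^{-\half})$ because the integrand is uniformly bounded. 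Your version makes the classical Godsil/Lebowitz-type representation $U\equald k_n+\sum_j X_j$ explicit and extracts the same third-order control from $|X_j|\le 2$; the two error bounds are the same quantity seen from two sides, since $\partial_x^3\log Z=\sum_j\kappa_3(X_j)$. What your packaging buys is that the Lindeberg--Feller step is completely standard and the uniform boundedness of the summands is manifest. What the paper's packaging buys is that it extends verbatim to the joint CLT on disjoint blocks (Theorem~\ref{thm:jointclt}), where the bivariate tilting $Z(x_1,x_2)$ does not factor into a product over independent two-valued variables and one must instead control mixed derivatives of $\log Z$ via Lemma~\ref{lem:cutbound}. Two minor remarks: the Lindeberg indicators already vanish once $\eps s_n>2$ (you wrote $4$, which is harmless), and your closing characteristic-function computation is indeed the right way to land on the ``in probability'' formulation --- the paper does the analogous thing with the real Laplace transform together with a tightness and uniqueness remark.
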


Theorem~\ref{thm:uclt} provides a Gaussian central limit theorem for the number of unpaired vertices in the entire matching. We can also examine the behavior of a typical matching on specific sections of $\cG_n$
\begin{thm}[Quenched Joint CLT] \label{thm:jointclt}
	Let $k$ be an integer such that $k/n \to t\in (0,1)$ as $n \to \infty$. Let $\fm$ be a matching chosen from the Gibbs measure $\mu$. Let $U_{[1:k]}$ and $U_{[k+1:n]}$ denote the number of unpaired vertices whose $G_n$--coordinates are in $[1,k]$ and $[k+1:n]$, respectively. We have
	\begin{align*}
		n^{-\half}\left( {{U}_{[1:k]}}-\la {{U}_{[1:k]}}\ra_n, {{U}_{[k+1:n]}}-\la {{U}_{[k+1:n]}}\ra_n\right)\tod \N^2(0,\gs_Q^2\cdot \diag(t,1-t)) \text{ as } n\to \infty
	\end{align*}
	in probability.
\end{thm}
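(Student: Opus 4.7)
The strategy is to prove convergence of the joint moment generating function (MGF) of the pair $(U_{[1:k]}, U_{[k+1:n]})$ under the Gibbs measure to the MGF of two independent centered Gaussians, and then conclude via the Lévy continuity theorem applied pathwise in the disorder. For $(s_1,s_2)\in\dR^2$, introduce the two-parameter partition function $Z_n(s_1,s_2)$ obtained by replacing the vertex weight $\nu_v$ with $\nu_v + s_1$ for vertices with $G_n$-component in $[1,k]$ and with $\nu_v + s_2$ for vertices with $G_n$-component in $[k+1,n]$. A direct computation gives
\[
\bigl\la e^{s_1 U_{[1:k]} + s_2 U_{[k+1:n]}}\bigr\ra_n = \frac{Z_n(s_1,s_2)}{Z_n(0,0)},
\]
so the problem reduces to analyzing $\log Z_n(s_1/\sqrt{n}, s_2/\sqrt{n})$ up to second order in $(s_1,s_2)$.

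The key observation is that $Z_n(s_1,s_2)$ nearly factorizes as $Z_{[1:k]}(s_1)\cdot Z_{[k+1:n]}(s_2)$: the two subgraphs $\cG_{[1:k]}$ and $\cG_{[k+1:n]}$ are themselves cylinder graphs of lengths $k\sim tn$ and $n-k\sim(1-t)n$, and they are linked only by the $|V_H|$ boundary edges between slices $k$ and $k+1$. This boundary contributes at most $O(1)$ to $\log Z_n(s_1,s_2)$. Applying the arguments of Theorems~\ref{thm:Zmeanvar}--\ref{thm:Zclt} to each subcylinder separately yields
\[
n^{-1}\la U_{[1:k]}\ra_n \toP t\, u
\quad\text{and}\quad
n^{-1}\bigl(\la U_{[1:k]}^2\ra_n - \la U_{[1:k]}\ra_n^2\bigr) \toP t\,\gs_Q^2,
\]
with the analogous limits for $U_{[k+1:n]}$ and $t$ replaced by $1-t$. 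Writing $U = U_{[1:k]} + U_{[k+1:n]}$ and comparing with the variance limit of Theorem~\ref{thm:Mvar} forces the cross term
\[
n^{-1}\bigl(\la U_{[1:k]} U_{[k+1:n]}\ra_n - \la U_{[1:k]}\ra_n\la U_{[k+1:n]}\ra_n\bigr)\toP 0.
\]

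Finally, I would Taylor-expand $\log Z_n(s_1/\sqrt{n}, s_2/\sqrt{n})$ to second order around the origin, using Lee-Yang zero localization (Section~\ref{sec:LeeYang}) to bound the cubic-and-higher remainder uniformly: higher Gibbs cumulants of $(U_{[1:k]}, U_{[k+1:n]})$ are controlled by the zero-free region of the modified partition function in the variables $e^{s_1}, e^{s_2}$. Combining the first-order coefficients with the variance/covariance limits above, the centered log-MGF converges in probability to $\tfrac{1}{2}\gs_Q^2(t s_1^2 + (1-t) s_2^2)$, which is the log-MGF of $\N^2(0, \gs_Q^2\cdot\diag(t, 1-t))$. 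The main obstacle is extending the Heilmann-Lieb zero-free framework to the \emph{spatially inhomogeneous} perturbation (distinct shifts $s_1,s_2$) and deriving uniform-in-disorder control of the remainder; this is where the cylinder structure and the fixed width $h=|V_H|$ are essential, since they keep the associated transfer operator finite-dimensional with a spectral gap, guaranteeing that the boundary cut between slices $k$ and $k+1$ contributes only $O(1)$ correction rather than a random term of diverging size.
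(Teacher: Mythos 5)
Your proposal follows essentially the same route as the paper: a two-parameter exponential tilt (equivalently, shifting the vertex weights by $s_1$ on $\cG_{[1:k]}$ and $s_2$ on $\cG_{[k+1:n]}$), near-factorization of $Z_n(s_1,s_2)$ across the bridging layer with an $O(1)$ error, variance limits for each block from Theorem~\ref{thm:Mvar}, and a second-order Taylor expansion of the log-MGF with the remainder and the cross term controlled by Lee--Yang interlacing (this is exactly the content of Lemma~\ref{lem:cutbound}, which bounds all mixed derivatives $\partial_{x_1}^{i_1}\partial_{x_2}^{i_2}R_{n,k}$ by a constant). The only cosmetic difference is that you invoke a transfer-operator/spectral-gap heuristic for the $O(1)$ boundary correction, whereas the paper obtains it directly from the explicit subset expansion of $R_{n,k}$ and the ratio bounds of Lemma~\ref{lem:ratioderbound}; no spectral-gap argument is needed.
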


Theorem~\ref{thm:uclt} has an annealed counterpart, where we examine the fluctuations of the Gibbs averaged number of unpaired vertices arising from the environment.

\begin{thm}[Annealed CLT for $\la U\ra_n$]\label{thm:uhatclt}
	Let $\la U\ra_n$ denote the Gibbs average of the number of unpaired vertices in a typical matching under the Gibbs measure defined in~\eqref{def:Gibbs}. We have
	\begin{align*}
		n^{-\half}\cdot (\la U\ra_n - \E \la U\ra_n ) \tod \N\left(0, \gs_{A}^2\right)
		\text{ as }n\to\infty.
	\end{align*}
\end{thm}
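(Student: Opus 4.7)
The plan is to apply a martingale central limit theorem to a decomposition of $\la U\ra_n - \E\la U\ra_n$ along the $G_n$-direction of the cylinder. Let $\cE_k$ denote the $\sigma$-algebra generated by all edge weights $\go_e$ and vertex weights $\nu_v$ whose $G_n$-component lies in $\{1,\dots,k\}$. Writing
\[
\la U\ra_n - \E \la U\ra_n = \sum_{k=1}^n D_k, \qquad D_k := \E[\la U\ra_n \mid \cE_k] - \E[\la U\ra_n \mid \cE_{k-1}],
\]
one obtains a martingale difference sequence adapted to $(\cE_k)$. The McLeish--Brown martingale CLT requires (i) a Lyapunov/Lindeberg condition on $\{D_k\}$ and (ii) convergence in probability of the conditional quadratic variation $n^{-1}\sum_{k=1}^n \E[D_k^2 \mid \cE_{k-1}]$ to $\gs_A^2$.

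For (i), a standard resampling representation gives $|D_k| \le \E\bigl[\bigl|\la U\ra_n - \la U\ra_n^{(k)}\bigr|\,\bigm|\,\cE_k \vee \tilde\cE_k\bigr]$, where $\la U\ra_n^{(k)}$ denotes the Gibbs average when the column-$k$ weights are replaced by an independent copy. Applying the recurrence~\eqref{def:recur} at the vertices in column $k$, one sees that $\la U\ra_n - \la U\ra_n^{(k)}$ depends Lipschitz-continuously on $(\nu,\go)_k$ and $(\tilde\nu,\tilde\go)_k$ (with a constant depending only on $h$), so $|D_k|$ is dominated by a linear combination of the column-$k$ weights and their copies. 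Combined with $\E(|\nu_v|^{2+\eps}+|\go_e|^{2+\eps})<\infty$, this yields the Lyapunov condition $n^{-(1+\eps/2)}\sum_k \E |D_k|^{2+\eps} \to 0$.

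For (ii), the essential ingredient is exponential decoupling: resampling the weights in column $k$ affects the Gibbs average of any local observable in column $j$ by a quantity decaying exponentially in $|j-k|$. This is a consequence of the transfer-matrix (Jacobi-matrix) representation introduced in Section~\ref{sec:Jacobi} together with the uniform spectral gap coming from the Heilmann--Lieb localization of Lee--Yang zeros (Section~\ref{sec:LeeYang}). Using this, $\E[D_k^2 \mid \cE_{k-1}]$ is well-approximated, up to an exponentially small error in $\min(k,n-k)$, by a bounded measurable function $\Phi_L$ of the weights in a window $[k-L,k+L]$ for any fixed $L$. Stationarity of the i.i.d.~environment and the ergodic theorem then give $n^{-1}\sum_k \Phi_L(\nu,\go;k) \probc \E \Phi_L$; sending $L\to\infty$ after $n\to\infty$ identifies the limit as $\gs_A^2$ through Theorem~\ref{thm:Mvar}, since $n^{-1}\sum_k \E D_k^2 = n^{-1}\var(\la U\ra_n)\to \gs_A^2$.

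The main obstacle I anticipate is quantifying the exponential decoupling in step (ii) with constants that are uniform in the disorder. Lee--Yang localization furnishes the spectral gap at the level of the deterministic Heilmann--Lieb theory, but translating this into a uniform exponential decay of correlations for the disordered transfer matrix requires care--the bounds must survive the randomness of the weights using only the assumed $(2+\eps)$-moments. I would handle this by a two-scale truncation: run the decoupling argument on a truncated environment where the extreme weights are clipped to a slowly growing threshold (so that the spectral gap is uniform on the truncated side), and absorb the contribution from the untruncated tail via the moment assumption and Theorem~\ref{thm:Mvar}.
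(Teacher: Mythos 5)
Step (i) of your plan is sound: the resampling/Lipschitz bound on the martingale differences is essentially the Efron--Stein computation of Lemma~\ref{lem:efron2}, and the Lyapunov condition follows from the $(2+\eps)$-moment assumption. The gap is in step (ii). The convergence of the conditional quadratic variation is made to rest on an ``exponential decoupling'' statement --- that resampling the column-$k$ weights perturbs Gibbs averages of observables in column $j$ by an amount decaying exponentially in $|j-k|$, uniformly enough in the disorder to justify the window approximation $\Phi_L$. Nothing in the paper supplies this. The transfer-matrix/Jacobi representation of Section~\ref{sec:Jacobi} exists only when $H$ is a singleton; for a general cylinder there is no such matrix structure, and the authors explicitly list the decay of correlations as an \emph{open} problem in Section~\ref{sec:disc}. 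The Lee--Yang machinery of Section~\ref{sec:LeeYang} (interlacing, localization, Lemma~\ref{lem:vertexremovalbound}) controls the \emph{total} change of cumulants under removal of vertices --- an $O(1)$ bound with no spatial localization of where that change occurs --- and so cannot be converted into the spatially decaying influence bound you need. Your proposed two-scale truncation is a plan for attacking a hard and genuinely new estimate, not a reduction to known results; as written, step (ii) does not close.

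The paper avoids this issue entirely. It writes $\la U\ra_n=\partial_x\log Z_n(x)\big|_{x=0}$ and differentiates the dyadic block decomposition~\eqref{def:dyadic}: the block contributions $\partial_x\log Z^{v}_{\pi_k(\mva)}$ are \emph{exactly} independent because they live on disjoint sections of the cylinder with disjoint sets of weights, and the cut errors $\partial_x R$, $\partial_x T$ are \emph{deterministically} bounded by Lemma~\ref{lem:cutbound}, so their total contribution is $o(\sqrt{n})$ in $L^2$ after Rosenthal's inequality. One then applies the classical Lyapunov CLT for triangular arrays, with the limiting variance identified by Lemma~\ref{lem:Uvar}. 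If you want to salvage a column-by-column martingale argument, you would need to first prove the uniform correlation-decay input yourself; otherwise the block decomposition with Lemma~\ref{lem:cutbound} is the route that actually works with the tools available.
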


\subsubsection{Height function and Brownian Motion}
The quenched and the annealed Central Limit Theorem together establish the fluctuation behavior of not only the total number of unpaired vertices but also the number of unpaired vertices in sections of $\cG_n$. Motivated by the results in~\cite{Rken1}, we can define a height function associated with a matching in order to analyze the typical behavior.
\begin{definition}\label{def:height}
	Let $U_{[k:l]}$ be defined as in Theorem~\ref{thm:jointclt}. We define the height function $\theta_n:[0,1]\to\dR$ as
	\begin{align*}
		\theta_n(t):=U_{[1:\lfloor nt \rfloor]},
	\end{align*}
	and the scaled centered height function as
	\begin{align*}
		\widehat{\theta}_n(t)=n^{-\half}\cdot (U_{[1:\lfloor nt \rfloor]} - ntu) \text{ for } t\in [0,1].
	\end{align*}
\end{definition}

The law of large numbers suggests that the limiting height function is $tu$ with $u$ as in Theorem~\ref{thm:Mvar}. We will prove this in Section~\ref{sec:BM}. We also characterize the distribution of the scaled height function.

\begin{thm}[Brownian Motion Limit]\label{thm:BM}
	Let $\gs^2:=\gs_Q^2+\gs_{A}^2$ where $\gs_Q,\gs_A$ are as defined in Theorem~\ref{thm:Mvar}. We have
	\begin{align*}
		\bigl( \widehat{\theta}_n(t) \bigr)_{t\in[0,1]}\tod \bigl( \gs B_{t}\bigr)_{t\in [0,1]} \text{ as }n\to \infty
	\end{align*}
	in the sense of finite dimensional distributional convergence, in probability, where
	$\left(B_{t}\right)_{t\in (0,1)}$ is a standard Brownian Motion.
\end{thm}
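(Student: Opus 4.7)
The plan is to prove finite-dimensional convergence by fixing arbitrary $0=t_0<t_1<\cdots<t_k\leq 1$ and establishing joint convergence of $(\widehat\theta_n(t_i))_{i=1}^{k}$ to $(\gs B_{t_i})_{i=1}^{k}$. Writing $I_i:=[\lfloor nt_{i-1}\rfloor+1:\lfloor nt_{i}\rfloor]$, since Brownian motion has independent Gaussian increments it is enough to show that the scaled increments
\begin{align*}
\gD_i^{(n)} := n^{-\half}\bigl(U_{I_i} - |I_i|u\bigr), \qquad i=1,\ldots,k,
\end{align*}
are jointly asymptotically independent and centered Gaussian with variances $\gs^2(t_i-t_{i-1})$.

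The central tool is the additive decomposition $\gD_i^{(n)}=Q_i^{(n)}+A_i^{(n)}+R_i^{(n)}$ into a \emph{quenched}, \emph{annealed}, and \emph{bias} piece:
\begin{align*}
Q_i^{(n)} &:= n^{-\half}\bigl(U_{I_i} - \la U_{I_i}\ra_n\bigr),\\
A_i^{(n)} &:= n^{-\half}\bigl(\la U_{I_i}\ra_n - \E\la U_{I_i}\ra_n\bigr),\\
R_i^{(n)} &:= n^{-\half}\bigl(\E\la U_{I_i}\ra_n - |I_i|u\bigr).
\end{align*}
The remainder $R_i^{(n)}\to 0$ is a bias estimate sharper than Theorem~\ref{thm:Mvar}: one needs $\E\la U_{I}\ra_n=|I|u+o(\sqrt{n})$, which I would extract either from smoothness of the limiting free energy in the monomer activity parameter (via the identity $\la U\ra_n=\partial_{\nu}\log Z_n$ after a uniform shift in the vertex weights) or from the concentration estimates that accompany Theorem~\ref{thm:Zmeanvar}.

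Next, I would obtain the joint convergence of the quenched parts, $(Q_i^{(n)})_{i=1}^k\tod \N^k(0,\gs_Q^2\cdot\diag(t_1-t_0,\ldots,t_k-t_{k-1}))$ conditional on the environment and in probability, by a $k$-fold extension of Theorem~\ref{thm:jointclt}. The cylindrical structure lets us cut $\cG_n$ at the cross-sections $\{\lfloor nt_i\rfloor\}_{i=1}^{k-1}$; only a bounded number of edges cross each cut, so the bipartition argument iterates. Analogously, I would extend Theorem~\ref{thm:uhatclt} to the joint convergence
\begin{align*}
(A_i^{(n)})_{i=1}^k\tod \N^k\bigl(0,\gs_A^2\cdot\diag(t_1-t_0,\ldots,t_k-t_{k-1})\bigr),
\end{align*}
which relies on showing that $\la U_{I}\ra_n$ is determined by the random weights inside $\cG_{I}$ up to boundary corrections of smaller order, so that annealed fluctuations coming from disjoint sub-cylinders are asymptotically independent.

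Finally, I would combine the two parts by conditioning on the environment $\go$. Given $\go$, the $A_i^{(n)}$ are deterministic while the $Q_i^{(n)}$ are asymptotically independent Gaussians with the quenched covariance, so the conditional characteristic function of $(\gD_i^{(n)})_{i=1}^k$ factorizes into a deterministic phase in the $A_i^{(n)}$'s times a Gaussian damping in the test parameters; unconditioning via the joint annealed CLT produces the joint Gaussian limit with diagonal covariance $\gs^2\cdot\diag(t_1-t_0,\ldots,t_k-t_{k-1})$, identifying the increments of $(\gs B_t)_{t\in[0,1]}$. The principal obstacle will be the quantitative boundary decoupling needed both in the $k$-fold quenched CLT and in the asymptotic independence of annealed fluctuations across disjoint sub-cylinders; I expect this to rest on the same Lee--Yang zero localization and transfer-matrix estimates underlying Theorems~\ref{thm:jointclt} and~\ref{thm:uhatclt}.
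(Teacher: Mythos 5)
Your proposal follows essentially the same route as the paper: the same three-way decomposition of each increment into a quenched part, an annealed part, and a deterministic bias, the same reduction of the bias to a sharper-than-LLN estimate ($|\E\la U\ra_m - mu|\le C$, which the paper obtains via the dyadic subdivision in Lemma~\ref{lem:linearGrowth}), the same boundary-decoupling via the cut-error bound of Lemma~\ref{lem:cutbound} to make the annealed contributions from disjoint sections exactly independent, and the same final step of conditioning on the environment so that the characteristic function factorizes into the quenched Gaussian damping times the annealed one. The only soft spot is that your suggested derivations of the bias estimate are vaguer than the paper's dyadic argument, but the overall structure is identical.
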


\subsubsection{CLT for the Ground State Energy}

Our techniques can be adapted to study the Ground State Energy ,
\[
M_n:= \max_{\fm\in \cM}\cH(\fm),
\]
\ie\ the maximum value of the Hamiltonian over all matchings. This may be regarded as the zero-temperature version of the problem considered in this article. We have the following result.

\begin{thm}\label{thm:gse}
 Let $\E(\go_e^2+\nu_v^{2}) < \infty$. There exist $m\in \dR$ and $\gs_{M}\in [0,\infty)$, such that 
 \[
 n^{-1}M_{n}\toP m \text{ and } n^{-1}\var M_{n} \to \gs_{M}^{2} \text{ as } n\to \infty. 
 \]
 Moreover, if 
 $
 \E( |\go_e|^{2+\epsilon}+|\nu_{v}|^{2+\epsilon})
 <\infty
 $ 
 for some $\eps>0$, we have 
 \[
 n^{-\half}\cdot (M_{n}-\E M_{n}) \tod N(0,\gs_{M}^{2}).
 \]
\end{thm}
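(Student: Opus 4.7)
The plan is to mirror the proofs of Theorems~\ref{thm:Zmeanvar} and~\ref{thm:Zclt}, replacing the free energy by its zero-temperature tropical analog. For any $1\leq k<n$, the concatenation of optimal matchings on $\cG_{[1:k]}$ and $\cG_{[k+1:n]}$ is a valid matching on $\cG_{n}$, so
\begin{align*}
    M_{n}\geq M_{[1:k]}+M_{[k+1:n]},
\end{align*}
where the two summands are independent with the distributions of $M_{k}$ and $M_{n-k}$. Taking expectations gives super-additivity of $\E M_{n}$. Combined with the linear upper bound $\E M_{n}\leq \sum_{v}\E(\nu_{v})_{+}+\sum_{e}\E(\go_{e})_{+}=O(n)$, Fekete's lemma yields $n^{-1}\E M_{n}\to m$ for some $m\in \dR$ (finiteness from below coming from the $n=1$ bound). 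Since $M_{n}$ is $1$-Lipschitz in each weight---changing a single $\nu_v$ or $\go_e$ by $\gd$ moves every Hamiltonian, and hence the maximum, by at most $|\gd|$---the Efron--Stein inequality gives $\var(M_{n})\leq \sum_{v}\var(\nu_{v})+\sum_{e}\var(\go_{e})=O(n)$. Hence $n^{-1}M_{n}\toP m$.

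For the variance convergence and the CLT, I would set $\cF_{k}=\gs(\nu_{v},\go_{e}:v,e \text{ in slices } 1,\ldots,k)$ and write $M_{n}-\E M_{n}=\sum_{k=1}^{n}X_{k}$ with $X_{k}:=\E[M_{n}\mid \cF_{k}]-\E[M_{n}\mid \cF_{k-1}]$. Because $M_{n}$ depends on the weights in slice $k$ through only $O(1)$ many $1$-Lipschitz coordinates, $\E|X_{k}|^{2+\eps}$ is uniformly bounded, and the Lindeberg condition for $\{n^{-\half}X_{k}\}$ follows from the $(2+\eps)$-moment hypothesis. To identify the limiting variance, I would use the decoupling identity
\begin{align*}
    M_{n}=M_{[1:k]}+M_{[k+1:n]}+R_{n,k},
\end{align*}
where the remainder $R_{n,k}\geq 0$ comes solely from matchings that exploit edges crossing the slice--$k$/slice--$(k+1)$ interface, and is therefore determined by the weights in a neighborhood of the interface. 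Translation invariance in the $G_{n}$-direction then yields a stationary structure, from which $\E[X_{k}^{2}]\to \gs_{M}^{2}$ for bulk $k$, hence $n^{-1}\var(M_{n})\to \gs_{M}^{2}\in [0,\infty)$, and the conditional variance $\sum_{k}\E[X_{k}^{2}\mid \cF_{k-1}]/n$ converges to $\gs_M^2$ in probability. The martingale CLT then delivers $n^{-\half}(M_{n}-\E M_{n})\tod \N(0,\gs_{M}^{2})$.

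The main obstacle is quantifying the correlation decay between the $X_{k}$'s at zero temperature, where the Heilmann--Lieb apparatus (Lee--Yang interlacing and the Jacobi recurrence of Section~\ref{sec:Jacobi}) used for $\log Z_{n}$ is not directly available. To replace it, I would exploit the fact that on a cylinder the ground state admits a max-plus transfer-matrix representation: $M_{n}$ equals the top entry of an i.i.d.~tropical product $T_{1}\otimes T_{2}\otimes\cdots\otimes T_{n}$ of random max-plus matrices indexed by the finite set of admissible boundary configurations across a single slice of $\cG_{n}$. Convergence of $n^{-1}M_{n}$ then also follows from Kingman's subadditive ergodic theorem applied to this product, and the decorrelation needed for the CLT from a regeneration/blocking argument using the contraction of the tropical dynamics (irreducibility and aperiodicity being inherited from the connectivity of $H$). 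Verifying these contraction estimates with sufficient uniformity given the unbounded random weights is where the bulk of the technical work will lie.
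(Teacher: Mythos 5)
Your first two steps are essentially the paper's: the decomposition $M_n=M_{[1:k]}+M_{[k+1:n]}+\sR_{n,k}$ with $\sR_{n,k}\ge 0$ gives convergence of $n^{-1}\E M_n$ (the paper runs it through Hammersley's lemma using the bound on $\sR_{n,k}$ rather than Fekete plus a linear upper bound, but both work), and your Lipschitz/Efron--Stein variance upper bound is exactly the paper's. The divergence, and the gap, is in how you handle the limiting variance and the CLT.

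The single fact you are missing is the quantitative interface bound: comparing a global ground state $\fm^\star$ with its restrictions to the two sections gives
\begin{align*}
0\le \sR_{n,k}\le \sum_{i=1}^{h}(\go_{k,i}-\nu_{k,i}-\nu_{k+1,i})_{+},
\end{align*}
so $\norm{\sR_{n,k}}_{p}\le C(\norm{\go_e}_p+\norm{\nu_v}_p)$ uniformly in $n$ and $k$. (Note that your statement that $R_{n,k}$ is ``determined by the weights in a neighborhood of the interface'' is false as written --- it depends on all the weights through the optimizer --- but it is \emph{dominated} by an interface-local quantity, which is all that is needed.) With this bound in hand, the theorem follows by verbatim repetition of the finite-temperature arguments: $n^{-1}\var M_n\to\gs_M^2$ by centering the decomposition, Cauchy--Schwarz, and the subadditive lemma exactly as in Lemma~\ref{lem:zvarconv}; and the CLT by the dyadic subdivision of $\cG_n$ into i.i.d.\ blocks with uniformly $L^{2+\eps}$-bounded central and terminal errors, Rosenthal's inequality for the Lyapunov moment bound, and the Lyapunov CLT, exactly as in Lemma~\ref{lem:lyapunov} and Theorem~\ref{thm:Zclt}. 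No correlation-decay input is required. By contrast, the route you propose leaves its key steps unproved: the martingale-difference argument asserts that ``translation invariance'' makes the $X_k$ stationary and that $n^{-1}\sum_k\E[X_k^2\mid\cF_{k-1}]$ converges in probability, but $\cG_n$ is a segment (not a torus), $X_k$ depends on $n$, and the concentration of the conditional variance is precisely the hard part of any martingale CLT for such models; the max-plus transfer-matrix/regeneration program you fall back on requires contraction estimates for random tropical products with unbounded weights that you yourself concede are not established. Finally, note the theorem allows $\gs_M=0$ (no variance lower bound is claimed), so the CLT is to be read as possibly degenerate; your outline implicitly treats $\gs_M^2$ as a genuine limit of per-slice variances, which is not needed.
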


Note that our methods cannot be easily adapted to characterize the scaling limit of a ``optimal'' matching, as we cannot use exponential tilting. Moreover, the optimal matching may not be unique unless we assume continuous distributions for $\go$ and $\nu$. On the issue of the limiting variance $\gs_{M}$, it is clear that a necessary condition for the variance to be non-degenerate is that $\pr(\go_{e}-\nu_{v}-\nu_{u}>0)>0$, otherwise the empty matching is optimal. The author in~\cite{CHAT} establishes some general conditions for obtaining fluctuation lower bounds, and one of the techniques can be adapted for the ground state energy under appropriate conditions on the edge and vertex weights. 

\subsection{Heuristics }
When the monomer fugacity is non-zero, we do not expect long-range correlations between edges or unpaired vertices. This result is easier to see in the one-dimensional case where $|V_{H}|=1$, the presence of an unpaired vertex essentially renders the graph disjoint. The Gibbs measure can be regarded as a product measure of the same defined on the two smaller pieces. The absence of long-range correlations is the mechanism for the central limit theorems and the Brownian motion limit; far apart sections appear as though they are independently sampled. In the cylinder graph case, the presence of a single vertex does not disconnect the graph. However, we may still express the random variables we are trying to prove central limit theorems for as the sums of related i.i.d.~random variables with a perturbation. This decomposition is possible explicitly because of the pseudo-1-dimensional structure. Recall from Definition~\ref{def:RestZ} that $Z_n$ is the partition function associated to $\cG_n$ and $Z_{[1:k]}$ and $Z_{[k+1:n]}$ are the restricted partition functions corresponding to $\cG_{[1:k]}$ and $\cG_{[k+1:n]}$ respectively. We have the trivial bound
\begin{align*}
	Z_{[1:k]}\cdot Z_{[k+1:n]}\le Z_n.
\end{align*}

Let us enumerate the $h$ many edges corresponding to the layer joining $\cG_{[1:k]}$ and $\cG_{[k+1:n]}$ as $e_{k,1},e_{k,2}\ldots e_{k,h}$. We denote the collection of these edges by $\cE_{k}$. Let the weight of edge $e_{k,i}$ be denoted as $\go_{k,i}$ for $i=1,2,\ldots,h$. The vertices adjacent to the edge $e_{k,i}$ are denoted by $(k,i)$ and $(k+1,i)$; and their weights by $\nu_{k,i}$ and $\nu_{k+1,i}$ respectively. Suppose a subset of edges $A \subseteq \cE$ is present in a matching. We denote by $Z_{[1:k]}^{A}$ and $Z_{[k+1:n]}^{A}$ the restricted partition functions on the respective pieces with the vertices incident to $A$ excluded. Applying the recursion defined in~\eqref{def:recur} we clearly have
\begin{align*}
	Z_n=\sum_{A\subseteq \cE}Z_{[1:k]}^{A}Z_{[k+1:n]}^{A}\prod_{e_{k,i} \in A} \exp (\go_{k,i}-\nu_{k,i}-\nu_{k+1,i}).
\end{align*}
In terms of the free energy, we have
\begin{align}\label{def:Decomp}
	\log Z_n=\log Z_{[1:k]} + \log Z_{[k+1:n]} +R_{n,k},
\end{align}
where $R_{n,k}$ is the error arising in the partition function from disconnecting $\cG_n$ into the two components, \ie\
\begin{align}\label{def:error}
	R_{n,k}=\log \left( \sum_{ A \subseteq \cE}\prod_{i : e_{k,i} \in A} e^{\go_{k,i}-\nu_{k,i}-\nu_{k+1,i}}\cdot \frac{Z^{A}_{[1:k]}}{Z_{[1:k]}}\cdot \frac{Z^{A}_{[k+1:n]}}{Z_{[k+1:n]}}\right).
\end{align}
By construction, $R_{n,k}$ is positive. Key to our analysis are moment bounds for $R_{n,k}$. We will show that $R_{n,k}$ and related similar random variables are bounded in norm $p=2+\eps$ for some $\eps>0$.

There is a natural gauge symmetry associated with the monomer-dimer model, which will enable us to transform the weights to make the quenched analysis easier later.

\begin{lem}\label{lem:gauge}
	Consider a vertex $v$ which has weight $\nu_v$ and all adjacent edges $e=(v,w)$ with weights $\go_{(v,w)}$ for all $w\sim v$. Let $y\in \dR$ be a constant. The transformation $\nu_v\to \nu_v+y$ and $\go_{(v,w)}\to \go_{(v,w)}+y$ for all $u \sim v$ leaves the Gibbs measure invariant.
\end{lem}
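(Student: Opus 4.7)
The plan is to verify directly that under the proposed reweighting the Hamiltonian shifts by the same additive constant on every matching, so the Gibbs measure is unchanged. Fix a vertex $v$ and a constant $y\in\dR$, and partition the set of matchings $\cM$ into two classes according to the role of $v$: the class $\cM_{0}$ of matchings in which $v$ is uncovered (a monomer), and the class $\cM_{1}$ of matchings in which $v$ is incident to some (necessarily unique) edge $e=(v,w)\in\fm$. In case $\fm\in\cM_{0}$, the vertex term $\nu_{v}$ appears in $\cH(\fm)$ and no edge incident to $v$ appears, so the reweighting $\nu_{v}\mapsto\nu_{v}+y$ increases $\cH(\fm)$ by exactly $y$, and the edge reweightings $\go_{(v,w)}\mapsto\go_{(v,w)}+y$ contribute nothing because none of those edges lie in $\fm$. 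In case $\fm\in\cM_{1}$, the vertex term $\nu_{v}$ is absent from $\cH(\fm)$ but exactly one term $\go_{(v,w)}$ appears, and only this edge term is affected by the reweighting, so again $\cH(\fm)$ increases by exactly $y$.

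Since $\cM=\cM_{0}\sqcup\cM_{1}$, the map $\fm\mapsto\cH(\fm)$ is shifted by the constant $y$ on all of $\cM$. Consequently $\exp(\cH(\fm))\mapsto e^{y}\exp(\cH(\fm))$ uniformly in $\fm$, and the partition function transforms as $Z\mapsto e^{y}Z$. The common factor $e^{y}$ cancels in the ratio defining $\mu(\fm)=Z^{-1}\exp(\cH(\fm))$, leaving the Gibbs measure unchanged. There is no real obstacle here: the argument is a one-line bookkeeping check once the two cases $\cM_{0}$ and $\cM_{1}$ are separated, and the only thing to note is that the dichotomy is exhaustive precisely because a matching covers each vertex at most once, so at most one edge incident to $v$ can contribute to $\cH(\fm)$.
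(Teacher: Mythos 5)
Your proof is correct and follows essentially the same route as the paper's: both observe that every matching either leaves $v$ uncovered or contains exactly one edge incident to $v$, so $\cH(\fm)$ shifts uniformly by $y$, the partition function picks up a factor $e^{y}$, and this factor cancels in the Gibbs ratio. Your explicit split into $\cM_{0}$ and $\cM_{1}$ is just a slightly more detailed write-up of the same bookkeeping.
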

\begin{proof}
	Let $\cH^y(\fm)$ denote the Hamiltonian with the transformed weights. Note that any matching $\fm$ includes exactly one edge adjacent to $v$, or leaves $v$ unpaired. Thus $\cH^y(\fm)=\cH(\fm)+y$ for all $\fm$. Moreover for the transformed partition function $Z_n^y$, we have
	\begin{align*}Z^y_n=\sum_{\fm \in \cM} \exp \cH^y(\fm)=\exp(y)\sum_{\fm \in \cM}\exp(\cH(\fm))=\exp(y)Z_n.
	\end{align*}
	With $\mu^y$ denoting the transformed Gibbs measure, for all $\fm\in \cM$ we have
	\begin{align*}\mu^y(\fm)=\frac{\exp(\cH^y(\fm))}{Z^y_n}=\frac{\exp(y)\exp(\cH(\fm))}{\exp(y)Z_n}=\mu(\fm).
	\end{align*}
	In particular, the Gibbs measure stays invariant.
\end{proof}
This symmetry will be very useful for us; when carrying out the quenched analysis, we will pass the vertex randomness onto the edges while preserving the Gibbs measure and thus all relevant statistical quantities.

\subsection{Roadmap}

This article is structured as follows. In Section~\ref{sec:technical} we state relevant technical results, in particular, a subadditive theorem due to Hammersley and an $L^{p}$ bound due to Rosenthal. In Section~\ref{section:MFE}, we prove Theorem~\ref{thm:Zmeanvar}, the convergence of the mean free energy, using a combination of subadditivity and variance control. We then establish convergence of the scaled variance and prove Theorem~\ref{thm:Zclt}, a Gaussian central limit theorem for the free energy. We conclude this section with the proof of Theorem~\ref{thm:gse}, the asymptotic behavior of the ground state energy is characterized. Section~\ref{sec:LeeYang} is dedicated to the analysis of the Lee-Yang zeroes and their interlacing property, which is further used to extract bounds on the cumulants of $U$, as well as joint cumulants of $U_{[i:j]}$'s on disjoint sections. We use these bounds in Section~\ref{sec:Ulims} to prove Theorem~\ref{thm:Mvar} which characterizes the mean and variance behavior of $U$, and Theorem~\ref{thm:uclt}, a Gaussian central limit theorem for $U$. We also provide explicit characterizations for the quenched and annealed contributions to the fluctuations. The culmination of these results is given in Section~\ref{sec:BM}, where we prove Theorem~\ref{thm:BM}, convergence of the scaled number of unpaired vertices within a section to Brownian motion. In Section~\ref{sec:Jacobi}, we discuss the connection of the one-dimensional model to the study of Jacobi matrices. We conclude with Section~\ref{sec:disc}, where we describe exciting problems for future consideration.

\section{Technical Results and Notation}\label{sec:technical}
Two analytic results are of prime importance to us, a subadditive theorem as proved by Hammersley and an $L^{p}$ bound for sums of i.i.d. random variables as proved by Rosenthal. Both are stated below as used here.

\begin{thm}[Hammersley~\cite{HAM}]\label{thm:subadditive}
	Let $a_n$ and $b_n$ be sequences such that \begin{align*}a_{n+m}\le a_n+a_{m} +b_{n+m}.\end{align*}
	A sufficient condition for $\frac{a_n}{n}$ to converge to limit $\ell < \infty$ is \begin{align*}\sum_{n\ge 1}\frac{|b_n|}{n^2}\le \infty.\end{align*}
\end{thm}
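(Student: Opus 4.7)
The result is Hammersley's perturbed subadditive lemma, extending Fekete's classical lemma. My plan is to reduce to the exact subadditive case by constructing an auxiliary sequence $\gamma_n \ge 0$ satisfying (i) $\gamma_n/n \to 0$ and (ii) $\gamma_n + \gamma_m \ge \gamma_{n+m} + b_{n+m}$ for all $n, m \ge 1$. Then $\tilde{a}_n := a_n + \gamma_n$ satisfies the exact subadditive inequality $\tilde{a}_{n+m} \le \tilde{a}_n + \tilde{a}_m$, so Fekete's lemma yields $\tilde{a}_n/n \to \tilde{\ell} := \inf_n \tilde{a}_n/n \in [-\infty, \infty)$. Property (i) then forces $a_n/n \to \tilde{\ell}$, and the $n=1$ case gives the bound $\tilde{\ell} \le a_1 + \gamma_1 < \infty$.

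Without loss of generality I would replace $b_n$ by $|b_n|$; the inequality is preserved and the summability hypothesis is unchanged. A concrete candidate for $\gamma_n$ exploiting the summability is
\[
\gamma_n = n \sum_{k \ge n} \frac{|b_k|}{k(k-1)},
\]
which obeys $\gamma_n/n \to 0$ as the tail of a convergent series (equivalent to the hypothesis since $1/k(k-1)$ and $1/k^2$ are comparable). Verifying (ii) reduces to expanding $\gamma_n + \gamma_m - \gamma_{n+m}$ as a telescoping sum and checking the resulting partial sum dominates $|b_{n+m}|$. If this exact form does not suffice (for instance if the constants do not line up), I would work at dyadic scales instead, choosing $\gamma_n = C \sum_{k : 2^k \le n} |b_{2^k}|$ with $C$ large enough that the superadditivity property absorbs the error at each scale, and checking $\gamma_n/n \to 0$ via Cauchy condensation.

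An alternative route avoiding the construction of $\gamma_n$ is to iterate the hypothesis directly. For fixed $k$, writing $n = qk + r$ with $0 \le r < k$, repeated application yields
\[
a_n \le q\, a_k + a_r + \sum_{p=1}^{q} b_{pk+r}.
\]
Dividing by $n$ and sending $n \to \infty$ with $k$ fixed gives $\limsup_n a_n/n \le a_k/k + E(k)$ where $E(k)$ is the $\limsup$ of the error term. Then letting $k \to \infty$ along a subsequence on which $a_k/k \to \liminf_n a_n/n =: \ell$ concludes $\limsup_n a_n/n \le \ell$.

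The main obstacle in either approach is handling the perturbation term carefully: the hypothesis $\sum |b_n|/n^2 < \infty$ does not directly bound partial sums $\sum_{m \le n} b_m$, so it has to be exploited via tails of the series, dyadic slicing, or a diagonal argument choosing $k = k(n)$ growing slowly with $n$ so that the tail $\sum_{m \ge k} |b_m|/m^2$ dominates the relevant error. Once this technical estimate is in place, the remaining steps combine straightforwardly with Fekete's lemma.
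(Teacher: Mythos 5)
The paper offers no proof of this statement---it is quoted directly from Hammersley---so I can only judge your argument on its own terms, and there is a genuine gap: none of the three routes you sketch survives the test case $b_n=n/\log^2 n$, which satisfies $\sum_n |b_n|/n^2=\sum_n 1/(n\log^2 n)<\infty$. Your primary strategy, finding $\gamma_n\ge 0$ with $\gamma_n/n\to 0$ and $\gamma_n+\gamma_m\ge \gamma_{n+m}+|b_{n+m}|$, is not merely a matter of constants failing to line up; it is impossible. Taking $n=1$ in (ii) and telescoping gives $\gamma_M\le M\gamma_1-\sum_{j=2}^{M}|b_j|$, and in the test case $\sum_{j\le M}|b_j|\sim M^2/(2\log^2 M)\gg M\gamma_1$, contradicting $\gamma_M\ge 0$ (and, if you drop nonnegativity, forcing $\gamma_M/M\to-\infty$, killing property (i)). Your concrete candidate $\gamma_n=n\sum_{k\ge n}|b_k|/(k(k-1))$ already fails (ii) when $b$ is supported on a single point $N$: for $n+m=N$ one computes $\gamma_n+\gamma_m-\gamma_N=(n+m-N)\cdot|b_N|/(N(N-1))=0<|b_N|$, because the telescoped difference of tail sums stops at index $n+m-1$ and never sees $|b_{n+m}|$. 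The dyadic variant fails (i): with $|b_{2^k}|=k2^k$ and $b$ zero elsewhere the hypothesis holds, yet $\sum_{2^k\le n}|b_{2^k}|\sim 2n\log_2 n$. Finally, the Fekete-style iteration with $k$ fixed fails because $E(k)=\limsup_n n^{-1}\sum_{p=1}^{q}|b_{pk+r}|$ is $+\infty$ in the test case: $n^{-1}\sum_{p=1}^q |b_{pk}|\ge q/(2\log^2 n)\to\infty$ as $n\to\infty$.

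The obstruction all three attempts run into is the real content of the theorem: the hypothesis $\sum_n|b_n|/n^2<\infty$ controls the accumulated error only along splits of $n$ into \emph{nearly equal} parts, whereas both Fekete's iteration and an additive correction term implicitly rely on highly unbalanced splits ($m$ fixed, $n\to\infty$). The known proofs (Hammersley's, which assumes $b$ nondecreasing, and de Bruijn--Erd\H{o}s's, which does not) split $n$ recursively into two roughly equal halves and \emph{average over the admissible split points} at each stage, which is exactly how $\sum_k|b_k|/k^2<\infty$ gets converted into an $o(n)$ bound on the total error down the binary tree. That balanced-splitting-plus-averaging device is the missing ingredient, and your closing paragraph correctly identifies the difficulty without supplying it. (For this paper's application the issue is invisible, since there $b_n$ is uniformly bounded and your iteration argument does go through.)
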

This condition is necessary as well; however, for our purposes, the sufficiency is adequate. Essentially the error terms must have a growth rate strictly lower than linear order in $n$.
\begin{lem}[Rosenthal~\cite{ROS}]\label{thm:rosenthal}
	Let $p>2$ be fixed and $X_1, X_{2},\ldots, X_n$ be i.i.d.~mean zero random variables with $\norm{X_1}_{p}=A$ and $\norm{X_1}_{2}=B$. Then, there exists a finite positive constant $C_p$, depending only on $p$, such that
	\begin{align*}
		\norm{ X_1+X_2+\cdots+X_n }_p \le C_p n^{\half}.
	\end{align*}
\end{lem}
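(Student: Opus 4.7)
The plan is to prove Rosenthal's inequality via the classical martingale route. The partial sums $S_k := X_1+X_2+\cdots+X_k$ form a mean-zero martingale with respect to the natural filtration $\cF_k:=\sigma(X_1,\ldots,X_k)$. First I would invoke the Burkholder--Davis--Gundy inequality (equivalently, the Marcinkiewicz--Zygmund inequality for i.i.d.~sums), which gives $\norm{S_n}_p \le c_p \cdot \norm{(\sum_{i=1}^n X_i^2)^{1/2}}_p$ for an absolute constant $c_p$ depending only on $p$. This is the key structural reduction: it converts the problem of bounding the $L^p$ norm of a sum into the problem of bounding the $L^{p/2}$ norm of a quadratic variation.

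Second, since $p/2>1$, I would rewrite the right-hand side as $\norm{\sum_{i=1}^n X_i^2}_{p/2}^{1/2}$ and apply Minkowski's inequality in $L^{p/2}(\Omega)$ to estimate $\norm{\sum_{i=1}^n X_i^2}_{p/2} \le \sum_{i=1}^n \norm{X_i^2}_{p/2} = n\cdot \norm{X_1}_p^2 = nA^2$. Chaining the two bounds yields $\norm{S_n}_p \le c_p\cdot A\cdot \sqrt{n}$, which is the claimed inequality with $C_p := c_p A$. Read literally the statement declares that $C_p$ depends only on $p$, but strictly speaking it must also depend on the distribution of $X_1$ through $A=\norm{X_1}_p$; in the applications that follow, $A$ (and likewise $B$) is a fixed model parameter, so absorbing it into $C_p$ is standard and harmless.

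There is essentially no serious obstacle: this is a textbook inequality and the only care needed is to cite or briefly sketch the Burkholder--Davis--Gundy step. A fully self-contained alternative is the symmetrization argument: replace $X_i$ by $X_i-X_i'$ with $X_i'$ an independent copy (losing at most a factor of two in $L^p$ norm by Jensen), randomize the symmetric summands with independent Rademacher signs $\varepsilon_i$, and then apply Khintchine's inequality conditionally on the $X_i, X_i'$'s. Both routes reduce the problem to bounding the quadratic variation $\sum_{i=1}^n X_i^2$ in $L^{p/2}$, after which the Minkowski step is identical. The proof is therefore essentially one citation away from any standard probability reference.
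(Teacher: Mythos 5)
Your argument is correct, and there is nothing in the paper to compare it against: the paper states this lemma as a citation to Rosenthal and gives no proof, so any valid derivation is acceptable. Your route — Marcinkiewicz--Zygmund/Burkholder--Davis--Gundy to reduce $\norm{S_n}_p$ to $c_p\,\norm{\sum_{i=1}^n X_i^2}_{p/2}^{1/2}$, then Minkowski in $L^{p/2}$ (legitimate since $p/2>1$) to get $\norm{\sum_{i=1}^n X_i^2}_{p/2}\le n A^2$ — is a standard and complete proof of the bound $\norm{S_n}_p\le c_p A\sqrt{n}$, and the symmetrization/Khintchine alternative you sketch works equally well. Two remarks. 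First, what you prove is in fact a cruder statement than Rosenthal's full inequality (which keeps the two terms $n^{1/p}\norm{X_1}_p$ and $n^{1/2}\norm{X_1}_2$ separate and is sharp in its dependence on the distribution); your bound never uses $B=\norm{X_1}_2$ at all. That is harmless here, because the form actually invoked in the paper (Lemma~\ref{lem:lyapunov} and the proof of Theorem~\ref{thm:Zclt}) is exactly the $C\sqrt{n}$ bound with the constant allowed to depend on the fixed laws of $\go$ and $\nu$. Second, your observation that the constant cannot depend only on $p$ but must also absorb $A$ is correct — the lemma as printed is slightly imprecise on this point — and since $A$ is a fixed model parameter throughout the paper, absorbing it into the constant is indeed the intended reading.
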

We will repeatedly encounter logarithmic derivatives, and the following formula due to di Bruno is helpful to bound them from above.
\begin{lem}[di Bruno's Formula]\label{lem:MomentCumulant}
	Let $f:\dR \to \dR_{+}$ be $n$--times differentiable, and let $g(t)=\log f(t)$. Let the $n^{th}$ order derivatives of $f$ and $g$ be denoted by $f^{(n)}$ and $g^{(n)}$, respectively. We have,
	\begin{align*}
		 & g^{(n)}(t)=\sum_{\substack{m_1,m_2,\ldots,m_n\ge 0 \\ m_1+2m_2+\cdots+nm_n = n }} (-1)^{\sum_{j=1}^n m_j-1} \cdot \frac{n!\cdot \left(\sum_{j=1}^n m_j\right)!}{\prod_{j=1}^n m_j!\cdot j!^{m_j}}\cdot 
		\prod_{j=1}^n\left(\!\frac{f^{(j)}(t)}{f(t)}\!\right)^{m_j}.
	\end{align*}
\end{lem}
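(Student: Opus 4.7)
\textbf{Proof strategy for Lemma~\ref{lem:MomentCumulant}.} The plan is to derive the identity from the Taylor expansions of $f$ and of $\log(1+w)$. First I would introduce a formal variable $s$ and write
\begin{align*}
\frac{f(t+s)}{f(t)} = 1 + w(s), \qquad w(s) := \sum_{j=1}^{n} \frac{f^{(j)}(t)}{f(t)} \cdot \frac{s^j}{j!} + o(s^n),
\end{align*}
so that $g(t+s) - g(t) = \log(1 + w(s))$. Since $g^{(n)}(t) = n!\,[s^n]\,(g(t+s) - g(t))$, the task reduces to reading off the coefficient of $s^n/n!$ on the right-hand side of this identity.

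Next, I would substitute the Mercator series $\log(1+w) = \sum_{k\geq 1} (-1)^{k-1} w^k/k$ and expand each $w^k$ via the multinomial theorem. Setting $a_j := f^{(j)}(t)/f(t)$ for brevity, the expansion reads
\begin{align*}
w(s)^k = \sum_{\substack{m_1,\ldots,m_n \geq 0 \\ m_1 + \cdots + m_n = k}} \frac{k!}{\prod_{j=1}^n m_j!} \prod_{j=1}^n \left(\frac{a_j \, s^j}{j!}\right)^{m_j} + o(s^n),
\end{align*}
and the coefficient of $s^n$ in $w(s)^k$ selects exactly those tuples that additionally satisfy the weight constraint $\sum_j j\, m_j = n$. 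Pooling contributions from every $k \geq 1$, and using that $k = \sum_j m_j$ is then determined by $\vec m$, the result is a single sum over tuples $\vec m$ with $\sum_j j\, m_j = n$, carrying a prefactor assembled from $n!$ (the $[s^n/n!]$ extraction), $(-1)^{k-1}/k$ (the log series), and $k!/\prod_j m_j!(j!)^{m_j}$ (the multinomial expansion). A direct simplification then matches this prefactor with the expression stated in the lemma.

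The main, and essentially the only, obstacle is the combinatorial bookkeeping of these three constants; no analysis beyond formal manipulation of truncated series is required, since $f$ is assumed $n$-times differentiable at $t$. Two equivalent routes are available if one prefers to bypass the formal power-series calculation: (i) invoke the classical Fa\`a di Bruno formula for the $n$-th derivative of a composition $(h\circ f)^{(n)}$ and specialize to $h = \log$ using $h^{(k)}(y) = (-1)^{k-1}(k-1)!/y^k$; or (ii) proceed by induction on $n$, differentiating the claimed expression for $g^{(n)}$, applying the product rule together with the identity $(f^{(j)}/f)' = f^{(j+1)}/f - (f^{(j)}/f)\,(f'/f)$, and then regrouping the resulting sum so that tuples $(m_1, \ldots, m_{n+1})$ with $\sum_j j\, m_j = n+1$ appear with the correct coefficient. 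All three routes reduce to the same combinatorial identity; only the packaging differs.
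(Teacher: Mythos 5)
The paper itself offers no proof of this lemma: it is quoted as the classical Fa\`a di Bruno / moment--cumulant formula, so any complete derivation you give is ``different'' by default. Your power-series route is the standard one and is sound: writing $f(t+s)/f(t)=1+w(s)$ with $w(s)=\sum_{j=1}^n \frac{f^{(j)}(t)}{f(t)}\frac{s^j}{j!}+o(s^n)$ (legitimate for an $n$-times differentiable $f$ by Taylor's theorem with Peano remainder), expanding $\log(1+w)$ by the Mercator series, expanding $w^k$ by the multinomial theorem, and extracting $n!\,[s^n]$ is a complete argument once the bookkeeping is done; the alternative routes you mention, specializing Fa\`a di Bruno to $h=\log$ or inducting on $n$, are equally viable.

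The one step you should not wave away is the final ``direct simplification.'' Carried out honestly, your computation gives the prefactor $(-1)^{k-1}\cdot\frac{1}{k}\cdot k!\cdot\frac{n!}{\prod_j m_j!\,(j!)^{m_j}}=(-1)^{k-1}(k-1)!\cdot\frac{n!}{\prod_j m_j!\,(j!)^{m_j}}$ with $k=\sum_j m_j$; the factor $1/k$ from the logarithm series must be retained. This does \emph{not} match the coefficient printed in the lemma, which has $\bigl(\sum_j m_j\bigr)!$ where your derivation produces $\bigl(\sum_j m_j-1\bigr)!$. The case $n=2$ decides it: the correct value is $g''=f''/f-(f'/f)^2$, whereas the printed formula yields $f''/f-2(f'/f)^2$. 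So your method is correct, but your claim that the constants ``match the expression stated in the lemma'' is not; what your computation actually shows is that the statement as printed contains a typo, and the version you should state and prove is the $(k-1)!$ one (which is also what specializing Fa\`a di Bruno with $h^{(k)}(y)=(-1)^{k-1}(k-1)!/y^k$ gives). For the way the lemma is used later in the paper --- only to bound $|g^{(n)}|$ from above by bounds on the ratios $|f^{(j)}/f|$ --- the spurious extra factor $k\le n$ is harmless, but as an identity the printed formula is off by that factor, and your write-up should say so rather than assert agreement.
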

This formula is also referred to as the moment cumulant formula in probability, as it captures how the moments and cumulants of a random variable are related.
\subsection{Notation and Assumptions}
For the rest of this article, we will assume the following:
\begin{enumerate}

	\item The degree of a vertex $v$ in any graph $\cG$ will be denoted by $\text{deg}(v)$.

	\item For a graph $\cG$, $d_{max}$ will denote the maximal degree over all the vertices.

	\item Let $X=X(\mathfrak{m})$ denote a random variable depending on a monomer-dimer configuration. We will use $\la X\ra _n$ to denote the average with respect to the Gibbs measure $\mu$and $ \E X$ will denote the global average.
	\item When $X$ is real valued, $\|X\|_{p}$ will denote $\left(\E |X|^{p}\right)^{\frac{1}{p}}$.

	\item $X_n\toP X$ will denote random variables $X_n$ converging to $X$ in probability. $X_n\tod X$ will denote random variables converging to $X$ in distribution.
	\item For $x$ and $y\in \dR$, $x\land y$ denotes $\min\{x,y\}$ and $x\lor y$ denotes $\max\{x,y\}$.
	\item For vectors $\mv\xi$ and $\mv\zeta$, $\mv\xi\cdot \mv\zeta$ denotes the dot/inner product.
	\item For a matrix $\mv{A}$, $\mv{A}^{T}$ will denote the transpose.
	\item For matrices $\mv{A}$ and $\mv{B}$, $\mv{A}\mv{B}$ denotes the matrix product.
	\item $\diag(a_1,a_2,\ldots,a_k)$ will denote the $k\times k$ diagonal matrix with $i$-th diagonal entry given by $a_i$ for all $i$.
\end{enumerate}

\section{Free Energy: Mean and Fluctuation Analysis}\label{section:MFE}
\subsection{Mean and Variance Convergence}

In this section, we will establish bounds on the fluctuation of the mean free energy and its convergence. Before this, we bound the size of the error term $R_{n,k}$ encountered in~\eqref{def:Decomp}.

\begin{lem}\label{lem:errbound}
	Let $p\ge 1$ be fixed. Assume that, $\norm{\go}_p+ \norm{\nu}_p<\infty$. We have
	\begin{align*}
		\norm{R_{n,k}}_p
		\le h(1+\norm{\go}_p+4\norm{\nu}_p).
	\end{align*}
\end{lem}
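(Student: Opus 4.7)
The plan is to bound $R_{n,k}$ pointwise by a sum of $h$ local functions of the weights, then close with Minkowski's inequality. Since the $A=\varnothing$ term in~\eqref{def:error} contributes exactly one to the argument of the logarithm, $R_{n,k}\ge 0$ holds automatically, so only an upper bound is needed.

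The key step will be to establish the sub-probability estimate
\begin{equation*}
\frac{Z^A_{[1:k]}}{Z_{[1:k]}}\le \prod_{v\in \partial_L A} e^{-\nu_v},
\qquad
\frac{Z^A_{[k+1:n]}}{Z_{[k+1:n]}}\le \prod_{v\in \partial_R A} e^{-\nu_v},
\end{equation*}
where $\partial_L A=\{(k,i):e_{k,i}\in A\}$ and $\partial_R A=\{(k+1,i):e_{k,i}\in A\}$ are the endpoints of $A$ on the two sides. I would obtain this by noting that $Z_{[1:k]}$ dominates its restriction to matchings in which every vertex in $\partial_L A$ is a monomer, and that restricted sum equals $\prod_{v\in\partial_L A}e^{\nu_v}\cdot Z^A_{[1:k]}$ by factoring out the monomer weights on $\partial_L A$ and reducing to the principal subgraph. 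Feeding the two ratios into each summand of~\eqref{def:error} and applying the standard subset-product identity $\sum_{A}\prod_{i\in A}x_i=\prod_i(1+x_i)$ will produce
\begin{equation*}
e^{R_{n,k}}\le \prod_{i=1}^h \bigl(1+e^{\omega_{k,i}-2\nu_{k,i}-2\nu_{k+1,i}}\bigr).
\end{equation*}
Taking logarithms and using the elementary bound $\log(1+e^x)\le 1+|x|$ would then deliver the pointwise estimate
\begin{equation*}
R_{n,k}\le \sum_{i=1}^h\bigl(1+|\omega_{k,i}|+2|\nu_{k,i}|+2|\nu_{k+1,i}|\bigr),
\end{equation*}
after which Minkowski's inequality applied to this sum of $h$ identically distributed terms gives $\|R_{n,k}\|_p\le h+h\|\omega\|_p+4h\|\nu\|_p = h(1+\|\omega\|_p+4\|\nu\|_p)$.

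The main obstacle will be the sub-probability bound on $Z^A/Z$; once it is in place, the combinatorial factorisation over subsets and the elementary log inequality are routine. Note that the proposed bound is deliberately crude in $h$: the quantities $\omega_{k,i}-2\nu_{k,i}-2\nu_{k+1,i}$ are i.i.d.~in $i$, so Lemma~\ref{thm:rosenthal} would in principle bring $\|R_{n,k}\|_p$ down from $O(h)$ to $O(\sqrt{h})$ when $p>2$, but since $h$ is a fixed (graph-dependent) constant throughout and no improvement in $h$ is needed in the subsequent applications, the linear-in-$h$ bound is sufficient.
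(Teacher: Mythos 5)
Your proposal is correct and follows essentially the same route as the paper: bound the ratios $Z^A/Z$ by restricting to matchings that leave the endpoints of $A$ unmatched, factor the sum over subsets $A$ into the product $\prod_{i=1}^h\bigl(1+e^{\go_{k,i}-2\nu_{k,i}-2\nu_{k+1,i}}\bigr)$, apply $\log(1+e^x)\le 1+|x|$, and finish with the triangle inequality in $L^p$. Your explicit justification of the sub-probability estimate (via the monomer restriction and the bijection with matchings of the principal subgraph) is exactly the step the paper states without proof, so there is nothing to add.
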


\begin{proof}
	Key to this lemma is the fact that $R_{n,k}>0$. Note that with $\cE$ and $A$ as introduced in~\eqref{def:error} we have,
	\begin{align*}
		Z_{[1:k]}^{A}\le Z_{[1:k]}\prod_{e_{k,i\in A}}\exp(-\nu_{k,i}-\nu_{k+1,i}).
	\end{align*}
	We combine this with the explicit expression for $R_{n,k}$ provided in~\eqref{def:error}. Thus
	\begin{align*}
		R_{n,k}
		\le \log \bigl( 1+ \sum_{A\neq \phi} \prod_{e_{k,i}\in A}\exp(\go_{k,i}-2\nu_{k,i}-2\nu_{k+1,i})\bigr).
	\end{align*}
	Applying the multinomial theorem, we get
	\begin{align*}
		R_{n,k}
		 & \le \log \prod_{i=1}^{h} \bigl(1+\exp(\go_{k,i}-2\nu_{k,i}-2\nu_{k+1,i})\bigr)
		\le \sum_{i=1}^{h} \bigl(1+ \abs{\go_{k,i}-2\nu_{k,i}-2\nu_{k+1,i}}\bigr).
	\end{align*}
	The last inequality follows from $\log (1+e^{z})\le 1+|z|$ for all $z\in \Real$. The proof follows from	the triangle inequality for norms.
\end{proof}

\begin{lem}\label{lem:freeenergy_mean}
	Let $Z_n$ be the partition function as defined in~\eqref{def:Z}. Assume that $\E(|\go|+|\nu|)<\infty$. There exists a finite constant $f$ such that
	\begin{align*}
		n^{-1} \E \log Z_n \to f \text{ as } n\to\infty.
	\end{align*}
\end{lem}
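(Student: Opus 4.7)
The plan is to apply a superadditivity argument via Fekete's lemma (which is the special case of Hammersley's theorem with $b_n \equiv 0$) to the sequence $a_n := \E\log Z_n$.

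First, I exploit the decomposition~\eqref{def:Decomp}: for any $1\le k<n$,
\begin{align*}
\log Z_n = \log Z_{[1:k]} + \log Z_{[k+1:n]} + R_{n,k},
\end{align*}
with $R_{n,k}\ge 0$ as noted in the introduction (the sum defining $R_{n,k}$ in~\eqref{def:error} contains the $A=\eset$ term, which equals $1$). Because the vertex and edge weights are i.i.d., the restricted partition functions $Z_{[1:k]}$ and $Z_{[k+1:n]}$ are independent with the same distributions as $Z_k$ and $Z_{n-k}$, respectively. Taking expectations and dropping the nonnegative term $\E R_{n,k}$ yields
\begin{align*}
a_n \ge a_k + a_{n-k},
\end{align*}
i.e.\ $(-a_n)$ is subadditive. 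By Fekete's lemma, $a_n/n$ converges to $\sup_n a_n/n \in (-\infty,\infty]$.

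Second, I must rule out $+\infty$. Using the crude bound $Z_n \le |\cM|\cdot \exp\bigl(\sum_v |\nu_v|+\sum_e |\go_e|\bigr)$, I obtain
\begin{align*}
\E\log Z_n \le \log|\cM| + N\cdot \E|\nu_v| + |E_{\cG_n}|\cdot \E|\go_e|,
\end{align*}
which is linear in $n$ since $|\cM|\le 2^{|E_{\cG_n}|}$ and $|E_{\cG_n}|=O(n)$. Therefore $a_n/n$ is bounded above and the limit $f := \lim_n a_n/n = \sup_n a_n/n$ is finite. (A matching lower bound, not strictly needed but reassuring, comes from the empty matching: $Z_n\ge \exp\bigl(\sum_v \nu_v\bigr)$, so $a_n/n \ge h\,\E\nu_v > -\infty$.)

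There is no real obstacle in this argument — the key structural input is the nonnegativity of $R_{n,k}$, which is built into the definition~\eqref{def:error}, so no moment control on the error terms is required at this stage. Lemma~\ref{lem:errbound} will only become essential later, when one wants to go beyond the existence of $f$ and obtain fluctuation bounds and the variance limit $\gs_F^2$, for which the finer estimate $\|R_{n,k}\|_p = O(1)$ is needed to control the correction terms in the telescoping decomposition.
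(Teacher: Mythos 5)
Your proof is correct, and it takes a mildly but genuinely different route from the paper's. The paper runs the decomposition~\eqref{def:Decomp} in the \emph{subadditive} direction, writing $\E\log Z_n\le \E\log Z_k+\E\log Z_{n-k}+\norm{R_{n,k}}_1$ and invoking Hammersley's theorem (Theorem~\ref{thm:subadditive}) with $b_n=\sup_k\norm{R_{n,k}}_1$; this requires the uniform $L^1$ bound on $R_{n,k}$ from Lemma~\ref{lem:errbound}. You instead use only the sign $R_{n,k}\ge 0$ to get pure superadditivity $a_n\ge a_k+a_{n-k}$ and apply Fekete, so no moment estimate on the error is needed — but you then must separately rule out $a_n/n\to+\infty$, which your crude bound $\E\log Z_n\le \log|\cM|+N\,\E|\nu_v|+|E_{\cG_n}|\,\E|\go_e|=O(n)$ does (the paper performs an analogous two-sided bound to show $|f|<\infty$, so this step is not extra work relative to theirs). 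Your version is marginally more self-contained for this lemma, while the paper's version front-loads the $\norm{R_{n,k}}_p$ control that is unavoidable later for the variance convergence and the CLT; as you correctly note, your shortcut does not remove the need for Lemma~\ref{lem:errbound} downstream. One small point worth making explicit in a final write-up: each $a_n=\E\log Z_n$ is finite (your two crude bounds give this under $\E(|\go|+|\nu|)<\infty$), which is needed before either Fekete or Hammersley can be applied, and the identification $Z_{[k+1:n]}\equald Z_{n-k}$ with independence of the two blocks rests on the i.i.d.\ weights and the translation structure of $\cG_n$, exactly as in the paper.
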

\begin{proof}
	First, assuming convergence we show that $|f|< \infty$.
	Let $v\in V_{\cG}$. This vertex can either be paired with an adjacent vertex or unpaired. The contribution of $v$ to any configuration can be bounded above by $1+\exp {\nu_v}+\sum_{w\sim v}\exp{\go_{(w,w)}}$. The all monomer configuration has energy given by $\sum_{v\in V}\nu_v$. These yield the upper and lower bounds
	\begin{align*}
		\sum_{v\in V}\nu_v \le \log Z_n \le \sum_{v\in V} \log\bigl(1+\exp (\nu_v)+\sum_{w\sim v}\exp{\go_{(v,w)}}\bigr).
	\end{align*}
	The expectation of the above expression is obviously bounded given the moment condition on $\go_e$ and $\nu_v$. Now, we recall the decomposition introduced in~\eqref{def:Decomp}. We clearly have
	\begin{align*}\E\log Z_n\le \E\log Z_{[1:k]} +\E\log Z_{[k+1:n]} + \norm{R_{n,k}}_1. \end{align*}
	It is sufficient for the first moments of $\nu$ and $\omega$ to exist to guarantee $\norm{R_{n,k}}_1$ being finite and uniformly bounded. Observing that
	\begin{align*} \E \log Z_{[1:k]}=\E \log Z_{k} \text{ and } \E \log Z_{[k+1:n]}=\E \log Z_{n-k} \end{align*}
	immediately yields that $\E\log Z_n$ satisfies the hypothesis of Theorem~\ref{thm:subadditive} with the corresponding $b_n$ being given by $\sup_{1\le k \le n}\norm{R_{n,k}}_1$ which is uniformly bounded above by a constant. This completes the proof of convergence.
\end{proof}

To extend the convergence of $n^{-1}\E\log Z_n$ as $n\to \infty$ to that of the random variable $n^{-1}\log Z_n$ requires an upper bound on the order of the fluctuations. We establish the following bound on the variance.
\begin{lem}\label{lem:efron}
	There exists a finite constant $C$ such that $\var (\log Z_n) \le C n$ for all $n$.
\end{lem}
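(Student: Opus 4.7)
The plan is to apply the Efron--Stein variance inequality to $\log Z_n$ regarded as a function of the mutually independent random weights $\{\nu_v\}_{v\in V}$ and $\{\go_e\}_{e\in E}$. The only input needed is that $\log Z_n$ is $1$-Lipschitz in each coordinate separately, which is a one-line computation. Fixing all weights except $\go_e$, I would split
\[
    Z_n = A_e + e^{\go_e}B_e,
\]
where $A_e$ collects $e^{\cH(\fm)}$ over matchings avoiding $e$ and $B_e$ collects $e^{\cH(\fm)-\go_e}$ over matchings containing $e$; both factors are independent of $\go_e$. Differentiating yields
\[
    \fpar{\log Z_n}{\go_e} = \frac{e^{\go_e}B_e}{A_e + e^{\go_e}B_e} = \mu(e \in \fm) \in [0,1],
\]
and an analogous split $Z_n = e^{\nu_v}C_v + D_v$ according to whether $v$ is unpaired gives $\fpar{\log Z_n}{\nu_v} = \mu(v \notin \fm) \in [0,1]$. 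Consequently, replacing any single weight $x$ with an independent copy $x'$ changes $\log Z_n$ by at most $|x - x'|$.

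Now let $\{\nu_v'\}_{v\in V}$ and $\{\go_e'\}_{e\in E}$ be independent copies of the two weight families, and write $\log Z_n^{(v)}$, $\log Z_n^{(e)}$ for the log-partition functions obtained by swapping in the primed copy of $\nu_v$ or $\go_e$ respectively. The Efron--Stein inequality then gives
\[
    \var(\log Z_n) \le \half\sum_{v\in V}\E\bigl[(\log Z_n - \log Z_n^{(v)})^2\bigr] + \half\sum_{e\in E}\E\bigl[(\log Z_n - \log Z_n^{(e)})^2\bigr],
\]
and combining this with the Lipschitz estimate above and the identity $\half\E(X - X')^2 = \var(X)$ reduces the bound to
\[
    \var(\log Z_n) \le |V|\var(\nu) + |E|\var(\go).
\]
Since $|V| = nh$ and $|E| = (n-1)h + n|E_H|$ are both linear in $n$ with $h = |V_H|$ fixed, and the two variances are finite under the standing second-moment assumption on $\nu$ and $\go$, the claimed bound $\var(\log Z_n) \le Cn$ follows with $C = h\var(\nu) + (h + |E_H|)\var(\go)$.

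I do not anticipate any serious obstacle. The whole argument rests on identifying the partial derivatives of $\log Z_n$ with Gibbs marginals of the natural events $\{e \in \fm\}$ and $\{v \notin \fm\}$, both automatically in $[0,1]$. A martingale bound of Azuma--Hoeffding type along a filtration that reveals the weights one at a time would yield the same order with comparable effort, but the Efron--Stein packaging is the cleanest and is presumably what the label of the lemma is pointing to.
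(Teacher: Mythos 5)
Your proof is correct and follows essentially the same route as the paper: both apply Efron--Stein after observing that $\log Z_n$ is $1$-Lipschitz in each weight, via the decomposition $Z_n=\ga_e+\gb_e e^{\go_e}$ (your $A_e+e^{\go_e}B_e$) and its vertex analogue, the only cosmetic difference being that you phrase the Lipschitz bound through the partial derivative $\mu(e\in\fm)\in[0,1]$ while the paper integrates the same quantity between $\go_e'$ and $\go_e$.
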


\begin{proof}
	The proof will proceed via the Efron-Stein inequality. Let $e=(u,v)$ denote an edge, let $\go_e$ the corresponding edge weight, and let $\nu_u$ and $\nu_v$ be the corresponding vertex weights. Let $Z_n^{e}$ denote the partition function obtained when $\go_e$ is replaced by an independent copy $\go_e^{'}$. Analogously, let $Z_n^{v}$ denote the partition function obtained by replacing $\nu_v$ with independent copy $\nu_v^{'}$
	\begin{align*}\var (\log Z_n)\le \sum_{e\in E} \E \left ( \log Z_n^{e}-\log Z_n\right)^{2} +\sum_{v\in V}\left( \log Z_n-\log Z_n^{v}\right)^{2}.\end{align*}
	For an edge $e=(u,v)$, let $\cG_n^{e}$ denote the graph $G_n$ with only the edge $e$ removed. Let $\cG_n^{u,v}$ denote the principal subgraph obtained on removing vertices $u$ and $v$. We now introduce
	\begin{align*}
		\ga_e:=Z_{\cG_n^{e}} \text{ and } \gb_e:=Z_{\cG_n^{u,v}}.
	\end{align*}
	Clearly, we have
	\begin{align*}
		Z_n=\ga_e +\gb_e\exp(\go_e).
	\end{align*}
	On taking the difference of $\log Z_n$ and $\log Z_n^{e}$,
	\begin{align*}
		\log Z_n^{e}-\log Z_n
		&=\log \left(\ga_e + \gb_e \exp({\go_e'}) \right)-\log\left(\ga_e +\gb_e \exp({\go_e})\right)\\
		&=\int_{\go_e'}^{\go_e}\frac{\gb_e e^z}{\ga_e +\gb_e e^z}\, dz.
	\end{align*}
	On applying the triangle inequality, we get
	\begin{align*}
		|\log Z_n^{e}-\log Z_n|\le |\go_e'-\go_e|.
	\end{align*}
	Now for the vertices, given a vertex $v$ let $\cG_n^{v}$ denote the principal subgraph obtained on the removal of $v$ from $\cG_n$. With two vertices $u$ and $v$, we denote $\cG_n^{u,v}$ to be the principal subgraph obtained on the removal of $u$ and $v$. We will introduce
	\begin{align*}
		\widehat{\ga}_v:=\sum_{u\sim v} \exp\left(\go_{(u,v)} \right)Z_{\cG_n^{u,v}} \text{ and }\widehat{\gb}_v:=Z_{\cG_n^{v}}.
	\end{align*}
	Recurrence~\eqref{def:recur} then yields
	$Z_n=\widehat{\ga}_v+\exp(\nu_v)\widehat{\gb}_v.$
	In a process identical to bounding the influence of the edge weights, we find
	$|\log Z_n^{v}-\log Z_n|\le |\nu_v'-\nu_v|.$
	Squaring, and adding over all $e\in E_{\cG_n}$ and $v \in V_{\cG_n}$ completes the proof.
\end{proof}

Using Chebyshev's inequality with Lemma~\ref{lem:efron}, we get the following obvious corollary. 

\begin{cor}\label{cor:Zmean}
	Under the assumption of Theorem~\ref{thm:Zmeanvar}, we have
	\[
		n^{-1}\left( \log Z_n-\E\log Z_n\right)\toP 0 \text{ as } n\to\infty..
	\]
\end{cor}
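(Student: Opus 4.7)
The plan is a direct, one-step argument: combine the variance bound from Lemma~\ref{lem:efron} with Chebyshev's inequality. There is essentially no obstacle, since all the hard work (the Efron--Stein variance bound) has already been done in the preceding lemma.

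Specifically, I would begin by noting that Lemma~\ref{lem:efron} gives a constant $C < \infty$ such that $\var(\log Z_n) \le C n$ for every $n$. Dividing by $n^2$ yields
\begin{align*}
	\var\!\left( n^{-1} \log Z_n \right) = n^{-2} \var(\log Z_n) \le \frac{C}{n},
\end{align*}
which tends to zero as $n \to \infty$. Then, for any fixed $\eps > 0$, Chebyshev's inequality gives
\begin{align*}
	\pr\!\left( \bigl| n^{-1}(\log Z_n - \E \log Z_n) \bigr| > \eps \right) \le \frac{\var(n^{-1}\log Z_n)}{\eps^2} \le \frac{C}{n\eps^2} \longrightarrow 0.
\end{align*}
This yields the claimed convergence in probability.

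The only point worth checking is that the moment hypothesis of Theorem~\ref{thm:Zmeanvar}, namely $\E(\nu_v^2 + \go_e^2) < \infty$, is indeed what is used to invoke Lemma~\ref{lem:efron} (through the Efron--Stein argument, which requires the edge- and vertex-swap increments to be square-integrable); this is automatic from the pointwise bound $|\log Z_n^e - \log Z_n| \le |\go_e' - \go_e|$ and its vertex analogue established in that lemma. No further technicality arises, so the corollary follows immediately.
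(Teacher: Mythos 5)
Your proposal is correct and is exactly the paper's argument: the paper derives this corollary in one line by combining the variance bound $\var(\log Z_n)\le Cn$ from Lemma~\ref{lem:efron} with Chebyshev's inequality. Nothing further is needed.
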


In order to characterize the limiting fluctuations, we require more steps. We need to show that the limiting variance exists, that it is non-degenerate and then finally prove the central limit theorem. We proceed in this sequence.

\begin{lem}\label{lem:zvarconv}
	Assume that $\E(\go^2+\nu^2)<\infty$. We have
	\begin{align*}
		n^{-1} \var (\log Z_n)\to \gs_{F}^2 \text{ as } n\to \infty.
	\end{align*}
\end{lem}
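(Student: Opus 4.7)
The plan is to establish an almost-subadditive inequality for $a_n := \var(\log Z_n)$ and then invoke Hammersley's theorem (Theorem~\ref{thm:subadditive}). The main tool is the decomposition~\eqref{def:Decomp}, $\log Z_n = \log Z_{[1:k]} + \log Z_{[k+1:n]} + R_{n,k}$. Since the restricted partition functions $Z_{[1:k]}$ and $Z_{[k+1:n]}$ are measurable with respect to disjoint collections of the i.i.d.~weights, they are independent, so $\var(\log Z_{[1:k]} + \log Z_{[k+1:n]}) = a_k + a_{n-k}$.

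Next, I would apply the $L^{2}$ triangle inequality to the centered decomposition. Writing $S_n := \log Z_n - \E\log Z_n$ and $\widetilde R_{n,k} := R_{n,k} - \E R_{n,k}$, one obtains $\|S_n\|_2 \le \sqrt{a_k + a_{n-k}} + \|\widetilde R_{n,k}\|_2$ and the reverse inequality. Lemma~\ref{lem:errbound} with $p=2$ gives $\|R_{n,k}\|_2 \le c_0$ for a constant $c_0$ independent of $n$ and $k$, hence $\|\widetilde R_{n,k}\|_2 \le c_0$. Combining with the bound $a_n \le Cn$ from Lemma~\ref{lem:efron} and squaring both inequalities, I would conclude
$$|a_n - a_k - a_{n-k}| \le 2c_0\sqrt{Cn} + c_0^2 =: b_n = O(\sqrt{n}).$$
Since $\sum_{n\ge 1} b_n/n^2 < \infty$, Hammersley's theorem applied to $a_{n+m} \le a_n + a_m + b_{n+m}$ yields convergence of $a_n/n$ to some limit $\sigma_F^2 \in [0,C]$.

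The crucial ingredient is that $R_{n,k}$ is bounded in $L^{2}$ \emph{uniformly} in $n$ and $k$; this reflects the pseudo-one-dimensional structure of the cylinder graph, as the defect produced by cutting along a single slice involves only $h$ edges and $2h$ vertices, each contributing $O(1)$ to $R_{n,k}$. The one subtle point I would be careful about is that the triangle inequality must be applied \emph{after} centering: without centering, the cross-term in $\var(A+B)$ would be $2\cov(A,B)$ of order $n$, rather than the $O(\sqrt n)$ bound that comes from Cauchy-Schwarz $\|S_n\|_2 \cdot \|\widetilde R_{n,k}\|_2 \le c_0 \sqrt{Cn}$. Positivity of $\sigma_F^2$, which is needed later for the central limit theorem in Theorem~\ref{thm:Zclt} but not for the present lemma, is not addressed by this argument and would require a separate lower bound, for instance by exhibiting an edge whose independent resampling produces a fluctuation of order $1$ on a positive fraction of coordinates.
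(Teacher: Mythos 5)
Your proposal is correct and follows essentially the same route as the paper: the decomposition~\eqref{def:Decomp}, independence of the restricted partition functions, the uniform $L^2$ bound on $R_{n,k}$ from Lemma~\ref{lem:errbound}, the $O(n)$ variance bound from Lemma~\ref{lem:efron} to control the cross term at order $\sqrt{n}$, and Hammersley's theorem. The only cosmetic difference is that you use the $L^2$ triangle inequality on the centered terms where the paper expands the square and applies Cauchy--Schwarz, and you are right that nondegeneracy of $\gs_F^2$ is handled separately (Lemma~\ref{lem:zvarlowerbound}).
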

\begin{proof}
	We begin with
	\begin{align*}
		\log Z_n=\log Z_{[1:k]}+\log Z_{[k+1:n]} + R_{n,k}.
	\end{align*}
	Recall that for a random variable $X$, we denote its centered version as
	\begin{align*}
		\ol{X}:=X-\E X.
	\end{align*}
	The variance of $X$ is then given by $\norm{\ol{X}}_{2}^{2}$. We have
	\begin{align*}
		\ol{\log Z_n}
		=
		\ol{\log Z_{[1:k]}}+\ol{\log Z_{[k+1:n]}} + \ol{R_{n,k}}.
	\end{align*}
	Applying the Cauchy-Schwarz inequality and independence between disjoint blocks, we get
	\begin{align*}
		\norm{\ol{\log Z_n}}_{2}^{2}\le \norm{\ol{\log Z_{k}}}_{2}^{2} + \norm{\ol{\log Z_{n-k}} }_{2}^{2} +\norm{\ol{R_{n,k}}}_{2}(\norm{\ol{\log Z_{k}}}_{2}+\norm{\ol{\log Z_{n-k}}}_{2}+\norm{\ol{R_{n,k}}}_{2}).
	\end{align*}
	Applying Lemmas~\ref{lem:errbound} and~\ref{lem:efron}, we have
	\begin{align*}\norm{\ol{\log Z_n}}_{2}^{2}\le \norm{\ol{\log Z_{k}}}_{2}^{2}+\norm{\ol{\log Z_{n-k}}}_{2}^{2}+C(\sqrt{k}+\sqrt{n-k})\end{align*}
	Applying the concavity of the square root, we then have
	\begin{align*}
		\norm{\log Z_n}_{2}^{2}\le \norm{\log Z_{k}}_{2}^{2}+\norm{\log Z_{n-k}}_{2}^{2}+C'\sqrt{n}.
	\end{align*}
	Thus, the sequence $\sigma_{F,n}^{2}=\norm{\ol{\log Z_n}}_{2}^{2}$ satisfies the hypothesis of the subadditive lemma, application of which completes the proof of convergence.
\end{proof}

\begin{lem}\label{lem:zvarlowerbound}
	There exists a constant $c>0$ such that $ \var (\log Z_n) \ge cn$ for all $n$.
\end{lem}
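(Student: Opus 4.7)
The plan is to lower bound $\var(\log Z_n)$ via the orthogonal ANOVA (Hoeffding) decomposition of $\log Z_n$ over the full collection of independent random weights and extract many individually positive single-coordinate contributions. For each independent random input $W\in\{\nu_v:v\in V\}\cup\{\go_e:e\in E\}$ define the main effect $U_W := \E[\log Z_n \mid W] - \E\log Z_n$. Since the $U_W$'s for distinct $W$'s are pairwise orthogonal,
\begin{align*}
  \var(\log Z_n) \ge \sum_{v\in V}\var\bigl(\E[\log Z_n\mid \nu_v]\bigr) + \sum_{e\in E}\var\bigl(\E[\log Z_n\mid \go_e]\bigr),
\end{align*}
and it suffices to produce a constant $c>0$ and a linear-in-$n$ family of weights whose main effects have variance at least $c$.

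For a fixed vertex $v$, Recurrence~\eqref{def:recur} gives $Z_n = \widehat\alpha_v + e^{\nu_v}\widehat\beta_v$ with $\widehat\alpha_v,\widehat\beta_v>0$ measurable with respect to the weights outside the star of $v$, and hence independent of $\nu_v$. Therefore $g_v(x) := \E[\log(\widehat\alpha_v + e^x\widehat\beta_v)]$ is a smooth, convex, non-decreasing function of $x$ whose derivative $g_v'(x) = \pr(v\text{ unpaired}\mid\nu_v = x)\in[0,1]$ interpolates between $0$ at $x\to-\infty$ and $1$ at $x\to+\infty$. In particular $g_v(\nu_v)$ is non-constant whenever $\nu_v$ is non-degenerate, and the analogous statement for an edge $e$ uses $\partial_{\go_e}\log Z_n = \mu(e\in\fm)$.

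The main obstacle is a \emph{uniform-in-}$(v,n)$ positive lower bound on $\var(g_v(\nu_v))$. I would split into two cases. In the mixing case $g_v'$ lies in a compact sub-interval of $(0,1)$ over the effective support of $\nu_v$ with uniformly positive probability over the other weights, and a direct convexity/spread estimate delivers $\var(g_v(\nu_v))\ge c$. In the opposite pinned case essentially every vertex is paired almost surely across realizations of the other weights; then $\sum_{e\ni v}\E\mu(e\in\fm)\ge 1-\eps$ at each $v$, so by pigeonhole together with $\deg(v)\le d_{\max}$ there is an incident edge $e^\star = e^\star(v)$ with $\E\mu(e^\star\in\fm)\ge (1-\eps)/d_{\max}$, and the analogous edge-weight calculation, using convexity of $\log Z_n$ in $\go_{e^\star}$, gives $\var\bigl(\E[\log Z_n\mid\go_{e^\star}]\bigr)\ge c$. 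Since each edge is incident to at most two vertices, this produces at least $n/2$ distinct edges with main-effect variance $\ge c$, yielding $\var(\log Z_n)\ge cn/2$. Throughout we implicitly use the non-degeneracy of at least one of the weight distributions $\go, \nu$; the lemma is obviously false when both are deterministic.
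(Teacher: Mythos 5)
Your opening reduction is sound and is essentially the paper's own first step: orthogonality of the first-order Hoeffding projections gives $\var(\log Z_n)\ge\sum_W\var\bigl(\E[\log Z_n\mid W]\bigr)$, which is what the paper obtains (after passing to the torus for exchangeability) via Doob's martingale decomposition, the tower property and Jensen. The identification of the main effects through $Z_n=\widehat{\ga}_v+e^{\nu_v}\widehat{\gb}_v$ and of $g_v'$ as an unpairing probability is also correct.

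The gap is exactly at the point you label ``the main obstacle'': you never prove the uniform-in-$(v,n)$ bound $\var(g_v(\nu_v))\ge c$; you describe a case split whose cases are imprecise, not exhaustive, and whose key estimates are asserted rather than derived. Concretely: (i) the dichotomy ``$g_v'$ in a compact subinterval of $(0,1)$'' versus ``every vertex paired a.s.''\ omits the regime $g_v'\approx 1$, and, more seriously, the functions $g_v=g_{v,n}$ depend on $n$, so a fixed model need not fall into one case uniformly in $n$; (ii) in the ``mixing case'' the claimed ``direct convexity/spread estimate'' \emph{is} the statement to be proven; (iii) in the ``pinned case'', $\E\mu(e^\star\in\fm)\ge(1-\eps)/d_{\max}$ is an average over $\go_{e^\star}$ and does not by itself bound the derivative $\partial_x\E[\log Z_n\mid\go_{e^\star}=x]$ from below on a portion of the support of $\go$ that carries both definite probability mass and definite spread, which is what a variance lower bound requires. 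The ingredient that closes this gap in the paper is a local comparison: writing $Z_n=\ga_e+\gb_e e^{\go_e}$, the recursion~\eqref{def:recur} yields $e^{\nu_u+\nu_v}\gb_e\le\ga_e\le e^{\nu_u+\nu_v}\prod_{e'=(x,y)\in\cE_1}(1+e^{\go_{e'}-\nu_x-\nu_y})\,\gb_e$, so the influence factor $\gb_e/(\ga_e+\gb_e)$ is bounded below by an $n$-independent random variable $U$ depending only on weights in a bounded neighborhood of $e$, with $\E U>0$; then $g(\go_e)-g(\go_e')\ge(\go_e-\go_e')_+\,\E U$ on $\{\go_e>\go_e'\}$, and squaring gives the claim. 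Your argument needs this (or an equivalent) explicit bound of $\ga_e/\gb_e$ by a local quantity; without it, it does not go through. The same device applied to $\widehat{\ga}_v/\widehat{\gb}_v\le\sum_{u\sim v}e^{\go_{(u,v)}-\nu_u}$ would in fact handle the vertex weights directly and let you dispense with the case analysis altogether.
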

\begin{proof}
	For the moment, we take $G_n$ to be the one dimensional torus $\dT_n$. To distinguish this from the usual non-transitive case, we denote the partition function as $W_n$. It is equivalent to prove a lower bound for $\var\log W_n$. This is because a modification of~\ref{def:Decomp} yields that $\log Z_{n-1} + R_{n+1,1} = \log W_{n+1}$, where $\norm{R}_{2}$ is bounded above. On centering,
	\begin{align*}
		\norm{\ol{\log Z_{n-1}}}_{2}+\norm{\ol{R_{n+1,1}}}_{2}\ge \norm{\ol{\log W_{n+1}}}_{2}.
	\end{align*}
	Consider the principal subgraph obtained on restriction to the vertices having fixed $H$--coordinate, let us say $k\in V_{H}$. There are $n$ edges corresponding to this layer, and we enumerate them as $\{e_1,e_{2},\ldots, e_n\}$. We denote their respective weights as $\{\go_1,\go_{2},\ldots,\go_n\}$. Let
	\begin{align*}
		\cF_j=\gs\{\go_1,\go_{2},\ldots, \go_j\}
	\end{align*}
	denote the filtration corresponding to the weights of the edges.
	We will use Doob's martingale decomposition to find a lower bound for the variance.

	We have that $\E\left(\log W_n\mid \cF_j\right)$ is a martingale in $j$, and the variance of $\E\left(\log W_n \mid \cF_n\right)$ can be written as the sum of the variances of the martingale differences. The $j^{th}$ martingale difference is
	\begin{align}\label{def:martdiff}
		\E\left(\log W_n\mid \cF_j\right)-\E\left(\log W_n\mid\cF_{j-1}\right).
	\end{align}
	Suppose we replace the weight $\go_j$ of edge $e_j$ with an independent copy $\go_j'$, let $W_n^{(j)}$ denote the new partition function. The expression in~\eqref{def:martdiff} is the same as
	\begin{align*}
		\E\bigl(\log W_n-\log W_n^{(j)}\mid \cF_j\bigr).
	\end{align*}
	Thus,
	\begin{align*}
		\var(\log W_n)
		\ge \var(\E\left(W_n\mid \cF_n\right))
		= \sum_{j=1}^n \norm{\E(\log W_n-\log W_n^{(j)}\mid\cF_j)}^2_{2}.
	\end{align*}
	A combination of the tower property and Jensen's inequality yields
	\begin{align}\label{def:varlb}
		\var(\E\left(\log W_n\mid \cF_n\right))
		\ge \sum_{j=1}^n\norm{ \E(\log W_n -\log W_n^{(j)}\mid\go_j)}^2_{2}
		= \sum_{j=1}^n\var(\E\left(\log W_n \mid\go_j\right)).
	\end{align}
	Shift-invariance of $G_n$ implies edge transitivity; therefore, exchangeability of the random variables we are conditioning on. Concretely, the random variables 
	$
		\E\left(\log W_n\mid \go_i\right)
	$ depends on $i$ only through $\go_i$, for all $i=1,2,\ldots,n$. We write
	\[
	 g_n\left(\go_1\right):=\E\left(\log W_n\mid \go_1\right)
	\]
	as the common conditional expectation function.
	
	The problem of the variance lower bound for the free energy has been reduced to showing that the variance of $g$ is bounded below by a constant, since from~\eqref{def:varlb} we have
	\begin{align}\label{def:reducetog}
		\var(\log W_n)
		\ge n\var(g_n(\go_1)).
	\end{align}
	Recall the $\ga_{e_j}$ and $\gb_{e_j}$ introduced in Lemma~\ref{lem:efron}, so that $W_n=\ga_{e_j} + \gb_{e_j} e^{\go_j}$ and 
	\begin{align}\label{def:perturb}
		\log W_n -\log W_n^{(j)}=\int_{\go_j'}^{\go_j}\frac{\gb_{e_j} e^z}{\ga_{e_j} +\gb_{e_j} e^z}\, dz
	\end{align}
	for all $j=1,2,\ldots,n$. W.l.o.g.~we can work with $j=1$.
	Let $u$ and $v$ be the vertices incident to $e_1$, and let $\cE_1\subset E_{\cG_n}$ denote the collection of edges adjacent to $e_1$. From the definitions of $\ga_{e_1}$ and $\gb_{e_1}$, on using~\eqref{def:recur} we have
	\begin{align*}
	 e^{\nu_u+\nu_v} \cdot \gb_{e_1} 
	 \le \ga_{e_1} 
	 \le
	 e^{\nu_u+\nu_v} \prod_{e=(x,y)\in \cE_1} (1+e^{\go_e-\nu_x-\nu_y})
	 \cdot \gb_{e_1}.
	\end{align*}
	Note that, the random variable 
	\[
	 U:=1/(1+e^{\nu_u+\nu_v} \prod_{e=(x,y)\in \cE_1} (1+e^{\go_e-\nu_x-\nu_y}))
	\]
	is bounded and has strictly positive mean as $\abs{\cE_1}$ is bounded. 
	Observe that 
	\begin{align*}
	 \var( g_n(\go_1))
	 =\frac{1}{2}\E\left(g_n(\go_1)-g_n(\go_1')\right)^{2}
	 =\E\left(\1_{\go_1
	 > \go_1'}
	 \left(g_n(\go_1)-g_n(\go_1')\right)^{2}\right).
	\end{align*}
	Since the integrand in~\eqref{def:perturb} is monotonically increasing in $z$, on the event $\{\go_1> \go_1'\}$, we have
	\begin{align*}
	 \log W_n-\log W_n^{(1)}
	 \ge \frac{\gb_{e_1}}{\ga_{e_1} + \gb_{e_1}} (\go_1-\go_1')_{+} 
	 \ge (\go_1-\go^{'}_1)_{+} \cdot U.
	\end{align*}
	Thus,
	\begin{align*}
		\left(g_n(\go_1)-g_n(\go_1') \right)\1_{\go_1> \go_1'}
		=\1_{\go_1> \go_1'}\E\left(\log W_n-\log W_n^1\mid \go_1,\go_1'\right)
		\ge (\go_1-\go_1^{'})_{+} \E(U).
	\end{align*}
	To complete the proof, we square, take the expectation, and apply~\eqref{def:reducetog}.
\end{proof}

\subsubsection{Proof of Theorem~\ref{thm:Zmeanvar}}
\label{ssec:pfofZmeanvar}
Now we have all the ingredients to prove Theorem~\ref{thm:Zmeanvar}. Convergence of the free energy follows from Lemma~\ref{lem:freeenergy_mean} and Corollary~\ref{cor:Zmean}. The variance result follows from Lemmas~\ref{lem:zvarconv} and~\ref{lem:zvarlowerbound}.
\qed

\subsection{Central Limit Theorem for the Free Energy}
Having shown that the limiting variance exists and is non-degenerate, we are ready to prove a Gaussian central limit theorem for the partition function. We will repeatedly use the decomposition stated in~\eqref{def:Decomp} to express the partition function as an approximate sum of i.i.d.~random variables. We will dyadically cut $\cG_n$ into disjoint blocks of equal size by dropping layers of edges corresponding to edges of $G_n$. The partition functions associated with these disjoint blocks are independent, and we will repeat this process to approximate the free energy $\log Z_n$ by a sum of i.i.d.~random variables. Heuristically, if the length of a block is even, we remove the central layer to obtain two disjoint blocks incurring an error $R$. If the length is odd, we drop the terminal layer of vertices incurring an error $T$ and proceed with the central cut. Note that, after we subdivide $2$ disjoint blocks into $4$, the errors $R_1$ and $R_{2}$ corresponding to cutting each of the original blocks are i.i.d. We now formally describe the subdivision. Consider the dyadic expansion of $n$, \ie
\begin{align*}
	n = \sum_{i=0}^\ell a_i\cdot 2^i \text{ with }a_i=\lfloor n/2^i \rfloor.
\end{align*}

We define two useful operations on $\cX=\{\mva\in\{0,1\}^\infty\mid \mva \text{ has finitely many $1$ entries}\}$. The evaluation map $\pi:\cX\to \dN$ takes the tuple to the associated natural number for which it is the dyadic expansion, \ie
\begin{align*}
	\pi(\mva)=\sum_{i=0}^{\infty}a_i\cdot 2^i
\end{align*}
and the left-shift map $s:\cX\to \cX$ is defined by dropping the first element of the tuple, \ie
\begin{align*}
	s(a_0,a_1,a_2,\ldots) =(a_1,a_2,\ldots). \end{align*}
We denote the $k^{\text{th}}$ iterate of the left-shift map as $s^k$, and denote
\begin{align*}
	\pi_k(\mva):=\pi \circ s^k(\mva)\text{ for } \mva\in\cX.
\end{align*}
The scheme for subdivision is defined for one iteration and extended inductively. Let $\mv{a}$ be the dyadic expansion of $n$. Suppose $a_{0}=0$. The subdivision can take place without the loss of the terminal portion of the block to obtain two blocks of length $\pi_1(\mv{a})$ and an error $R$. If $a_{0}=1$, we drop the terminal layer of vertices incurring an error $T$ and subdivide the block that remains again, obtaining disjoint blocks of size $\pi_1(\mv{a})$. In the first subdivision, we have the following
\begin{align*}
	\log Z_n=\log Z^1_{\pi_1(\mv{a})} + \log Z^{2}_{\pi_1(\mv{a})} + a_{0}T_{0}+R_{0}.
\end{align*}
On the $k^{th}$ subdivison we have disjoint blocks of size $\pi_{k}(\mva)$ as well as corresponding error terms, both of which can be indexed by the binary tree. We will denote the vertices in the $k^{th}$ generation as $V_{k}$. For a given generation $k$ and $v\in V_{k}$, $Z_{\pi_{k}(\mva)}^{v}$ denotes an independent copy of $Z_{\pi_{k}(\mva)}$, $R_{k}^{v}$ the central error arising in subdividing $Z^{v}_{\pi_{k}(\mva)}$ into two copies of $Z_{\pi_{k+1}(\mv{a})}$, and $T_{k}^{v}$ the terminal error associated to the same subdivision. It is clear that for fixed $k$, the $R_{k}^{v}$ are i.i.d., the same for $T_{k}^{v}$. 

In particular, we have
\begin{align}\label{def:dyadic}
	\log Z_n=\sum_{v\in V_{k}} \log Z^{v}_{\pi_{k}(\mva)}+\sum_{j=1}^k\sum_{v\in V_{j-1}} \bigl( R_{j-1}^{v}+a_{j-1}T_{j-1}^{v}\bigr).
\end{align}
The first step of subdivision is illustrated in Figure~\ref{fig:subdivide} for the case where $a_{0}=1$. The terminal layer of vertices and the central layer of edges to remove are highlighted. By construction, the family $\{\{Z^{v}_{\pi_{k}(\mva)}\}_{v\in V_{k}}\}_{k\le l}$ forms a triangular array. We will now use the Rosenthal bound in Theorem~\ref{thm:rosenthal} to show that we have the moment control required to apply the Lyapunov Central Limit Theorem.
\begin{figure}[htbp]
	\includegraphics[width=4in,page=1]{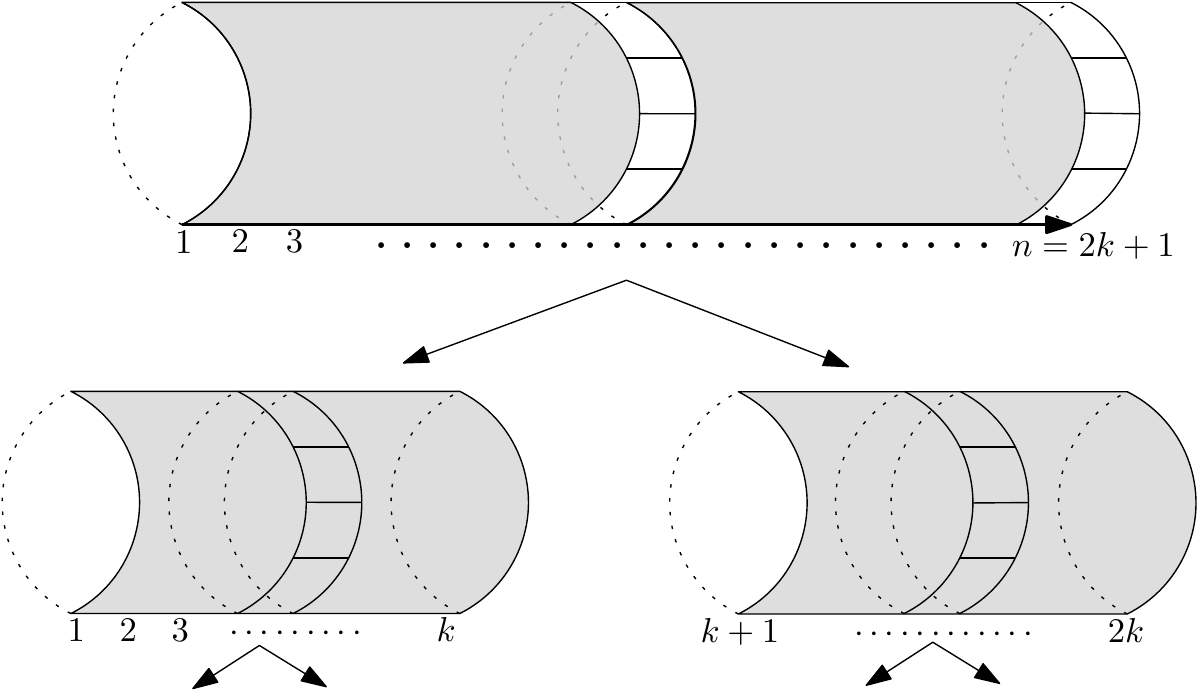}
	\caption{ First step of the subdivision}
	\label{fig:subdivide}
\end{figure}

\begin{lem}\label{lem:lyapunov}
	Let $\eps >0$ be fixed and $\norm{\go}_{2+\eps}+\norm{\nu}_{2+\eps}<\infty$. There exists a fixed constant $C$ such that that
	\begin{align*}
		\norm{\ol{\log Z_n}}_{2+\eps}\le C\sqrt{n}.
	\end{align*}
\end{lem}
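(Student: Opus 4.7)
The plan is to iterate the dyadic decomposition~\eqref{def:dyadic} $k=\lfloor \log_2 n\rfloor$ times so that the resulting blocks have bounded size, thereby writing $\ol{\log Z_n}$ as a sum of many i.i.d.\ centered pieces plus centered error terms that are also i.i.d.\ within each level of the binary tree. Rosenthal's inequality (Lemma~\ref{thm:rosenthal}) applied level by level then delivers the $\sqrt{n}$ bound.

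With this choice of $k$ we have $\pi_k(\mva)\in\{0,1\}$ and $2^k\le n<2^{k+1}$. Centering both sides of~\eqref{def:dyadic} and applying the triangle inequality yields
\begin{align*}
\norm{\ol{\log Z_n}}_{2+\eps} \le \norm{\sum_{v\in V_k} \ol{\log Z^v_{\pi_k(\mva)}}}_{2+\eps} + \sum_{j=1}^k \norm{\sum_{v\in V_{j-1}} \ol{R_{j-1}^v + a_{j-1} T_{j-1}^v}}_{2+\eps}.
\end{align*}
Within each level, the summands come from disjoint portions of $\cG_n$ and depend on disjoint vertex and edge weights, so they form i.i.d.\ families. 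For the main term, since each block at level $k$ contains at most $h$ vertices, the crude two-sided bound $\sum_u \nu_u \le \log Z^v_{\pi_k(\mva)} \le \sum_u \log(1+e^{\nu_u}+\sum_{w\sim u} e^{\go_{(u,w)}})$ together with $\log(1+e^z)\le 1+|z|$ yields a uniform estimate $\norm{\ol{\log Z^v_{\pi_k(\mva)}}}_{2+\eps}\le C_0$. Lemma~\ref{thm:rosenthal} therefore bounds this term by $C\sqrt{2^k}\le C\sqrt{n}$.

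For the error terms, Lemma~\ref{lem:errbound} already gives $\norm{R_{j-1}^v}_{2+\eps}\le C_R$ uniformly in $j$, since the bound depends only on the layer width $h$ and the weight moments, not on the size of the block being cut. An analogous estimate for the terminal errors $T_{j-1}^v$---obtained by iterating the recurrence~\eqref{def:recur} over the $h$ vertices of a terminal layer and once more applying $\log(1+e^z)\le 1+|z|$---gives $\norm{T_{j-1}^v}_{2+\eps}\le C_T$. Since there are $2^{j-1}$ i.i.d.\ summands at level $j$, Lemma~\ref{thm:rosenthal} bounds the $j$-th norm by $C\sqrt{2^{j-1}}$, and summing the geometric series $\sum_{j=1}^k \sqrt{2^{j-1}}=O(\sqrt{2^k})=O(\sqrt{n})$ completes the estimate.

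The main obstacle is establishing the uniform $L^{2+\eps}$ bound on the terminal error $T_{j-1}^v$, which requires an analogue of Lemma~\ref{lem:errbound} for the removal of a full layer of $h$ vertices rather than the central edge cut. This should be routine: iterating~\eqref{def:recur} through the $h$ vertices of one terminal layer produces an explicit expression whose logarithm is controlled termwise by quantities of the form $1+|\go_e|+|\nu_v|$ over a bounded number of weights, and the moment hypothesis $\norm{\go}_{2+\eps}+\norm{\nu}_{2+\eps}<\infty$ then yields the required uniform bound. A secondary technical point is the verification that the summands across each level of the tree are genuinely independent---this follows immediately from the construction of the decomposition, since each block is supported on a disjoint subset of $V$ and $E$, and errors $R_{j-1}^v, T_{j-1}^v$ only involve the weights on the interior cut layer or the terminal layer of the parent block.
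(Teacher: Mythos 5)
Your proof is correct and follows essentially the same route as the paper: iterate the dyadic decomposition~\eqref{def:dyadic} down to blocks of bounded size, bound the leaf terms uniformly via the crude two-sided estimate from the proof of Lemma~\ref{lem:freeenergy_mean}, and apply Rosenthal's inequality level by level to the i.i.d.\ families, summing the resulting geometric series $\sum_j 2^{j/2}=O(\sqrt{n})$. Your explicit observation that the terminal errors $T^v_{j-1}$ require their own uniform $L^{2+\eps}$ bound (Lemma~\ref{lem:errbound} as stated covers only the central cut errors $R$) is a point the paper's proof glosses over, and your sketch of how to obtain it by iterating~\eqref{def:recur} through the $h$ vertices of the terminal layer is the right one.
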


\begin{proof}
	To prove this lemma, we will have to subdivide until we have blocks of constant order size. We begin with~\eqref{def:dyadic} and take $k=\ell -2$. On centering and applying the triangle inequality,
	\begin{align*}
		\norm{\ol{\log Z}_n}_{2+\eps}\le \norm{\sum_{v\in V_{\ell}}\ol{\log Z^{v}}_{\pi_{l-2}(\mv{a})}}_{2+\eps}+\sum_{j=1}^{\ell-2}\norm{\sum_{v\in V_{j-1}}\ol{R^{v}_{j-1}}}_{2+\eps}+\sum_{j=1}^{\ell-2}a_{j-1}\cdot \norm{\sum_{v\in V_{j-1}}\ol{T^{v}_{j-1}}}_{2+\eps}.
	\end{align*}
	Note that each of the sums within the norms are sums of i.i.d.~mean zero random variables, justifying the application of Rosenthal's inequality. Lemma~\ref{lem:errbound} yields uniform bounds for $\norm{T_j}_{2+\eps}$ and $\norm{R_j}_{2+\eps}$ and the proof of Theorem~\ref{thm:Zmeanvar} yields a bound on $\norm{Z_{\pi_{\ell-2}(\mv{a})}}$. Applying Theorem~\ref{thm:rosenthal},
	\begin{align*}
		\norm{\ol{\log Z}_{(a_{\ell}\ldots a_{0})}}_{2+\eps}
		\le 2^{\frac{\ell-2}{2}}C_1+C_{2}\sum_{j=1}^{l-2}(1+a_j)2^{\frac{j}{2}}.
	\end{align*}
	where $C_1$ and $C_{2}$ are constants determined by $\norm{\go}_{2+\eps}$, $\norm{\nu}_{2+\eps}$ and $\eps$. Clearly, we have $1+a_j\le 2$ and by hypothesis, $2^{\ell}\le n\le 2^{\ell+1}$, implying that
	\begin{align*}
		n^{-\half}\norm{\ol{\log Z}_n}_{2+\eps}\le C_1+2^{\frac{3}{2}}C_{2}.
	\end{align*}
	Taking $C$ to be the right hand side constant completes the proof.
\end{proof}

The $(2+\eps)$-th moment condition is required for the Lyapunov condition to hold. We are now ready to prove Theorem~\ref{thm:Zclt}.

\subsubsection{Proof of Theorem~\ref{thm:Zclt}}
	Let $n$ have the dyadic representation given by $(a_{0},a_1,\ldots a_{\ell})$, and let us carry out the subdivison in~\eqref{def:dyadic} to depth $k=\lfloor\ell/2 \rfloor$ yielding blocks of size $m$ where $2^{\frac{\ell}{2}}\le m\le 2^{\frac{\ell+1}{2}}$
	\begin{align*}
		n^{-\half}\ol{\log Z_n}
		=n^{-\half}\sum_{v\in V_{k}}
		\ol{\log Z_{\pi_{k}(\mva)}^{v}}+n^{-\half}\sum_{j=1}^k\sum_{v\in V_{j-1}} \bigl(\ol{R_{j-1}^{v}}+a_{j-1}\ol{T_{j-1}^{v}}\bigr).
	\end{align*}
	It is easy to show that the error terms vanish in the limit, by the same argument used in Lemma~\ref{lem:lyapunov} we have that
	\begin{align*}
		n^{-\half}\sum_{j=1}^k\sum_{v\in V_j} \norm{\ol{R_{j-1}^{v}}+a_{j-1}\ol{T_{j-1}^{v}}}_{2}\le 2^{-k}\cdot C2^{ \frac{k}{2}+1} \to 0.
	\end{align*}
	By Slutzky's theorem, we need only worry about the distributional convergence of
	\begin{align*}
		n^{-\half}\sum_{v\in V_{k}} \ol{\log Z^{v}_{\pi_{k}(\mva)}}.
	\end{align*}
	The explicit relation between $n$ and $\ell$ is given by $\ell=\lfloor {\log n}/{\log 2} \rfloor $, and it is clear that $m=\pi_{k}(\mva)=\lfloor {n}/{2^k}\rfloor$. From these definitions of $\ell$, $k$ and $m$ in terms of $n$, it is also clear that $\sqrt{{n}/({m2^k})} \to 1$. Consider the triangular array defined by $\ol{\log Z^{v}_{m}}$, where $v\in V_{\lfloor {\ell}/{2}\rfloor}$. For fixed $m$, they are i.i.d. as $v$ varies. Lemma~\ref{lem:lyapunov} verifies that the hypotheses of the Lyapunov Central Limit Theorem hold and we may directly apply it to obtain
	\begin{align*}
		{\bigl({m2^{\lfloor\ell/2\rfloor}}\bigr)^{-\half}}\cdot \sum_{v\in V_{\lfloor\frac{\ell}{2}\rfloor}} \ol{\log Z^{v}_{m}} \to \N(0,\sigma_{F}^2),
	\end{align*}
	as desired.\qed
	
\subsection{Ground State Energy}

Here we discuss the asymptotic behavior of the Ground State energy or the free energy in the setting of zero temperature. One can hope to extend the techniques used in the finite temperature case to the zero temperature case, in order to obtain the scaling and distributional limit of the ground state energy. In our model, this mainly requires two ingredients: the error decomposition as in~\eqref{def:Decomp} and a variance control. Note that, given a matching $\fm$ on $\cG_{n}$, we may restrict it to the sections $\cG_{[1:k]}$ and $\cG_{[k+1:n]}$, and obtain matchings denoted by $\fm_{[1:k]}$ and $\fm_{[k+1:n]}$, respectively. We will use $M_{[1:k]}$ and $M_{[k+1:n]}$ to denote the ground state energies on sections $\cG_{[1:k]}$ and $\cG_{[k+1:n]}$, respectively. 
Similar to the decomposition given in~\eqref{def:Decomp}, we define the ``error'' random variable
\[
\sR_{n,k} := M_n - (M_{[1:k]}+M_{[k+1:n]}).
\] 
It is trivial to show that
 \begin{align}
 \sR_{n,k}\ge 0.
 \end{align} 
 
Now, the restriction of any matching $\fm$ to the sections $\cG_{[1:k]}$ and $\cG_{[k+1:n]}$, removes all edges present in the bridging layer $\cE_{k}$. In particular, using any ground state $\fm^\star$, \ie\ a matching satisfying $\cH(\fm^\star)=M_n$, we have
\begin{align*}
 \sR_{n,k} 
 &\le \cH(\fm^{\star}) - \cH(\fm^{\star}_{[1:k]}) - \cH(\fm^{\star}_{[k+1:n]}) \\
 &= \sum_{i:e_{k,i}\in \fm^{\star}} (\go_{k,i}-\nu_{k,i}-\nu_{k+1,i})
 \le \sum_{i} (\go_{k,i}-\nu_{k,i}-\nu_{k+1,i})_+.
\end{align*}
Thus we have the following lemma.

 \begin{lem}
Let $\norm{\go_{e}}_{p}+\norm{\nu_{v}}_{p}<\infty$ for some $p\ge 1$. There exists a constant $C$ depending only on $p$ such that \[
\norm{\sR_{n,k}}_p\le C(\norm{\go_{e}}_{p}+\norm{\nu_{v}}_{p}).
\]
 \end{lem}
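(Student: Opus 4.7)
The plan is to combine the two-sided bound for $\sR_{n,k}$ that has already been derived in the lines immediately preceding the statement with Minkowski's inequality, exploiting that the bridging layer $\cE_k$ consists of exactly $h$ edges, a quantity that is constant in $n$.

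More precisely, first I would record the deterministic bound that the text has just established, namely
\[
0\le \sR_{n,k}\le \sum_{i=1}^{h}(\go_{k,i}-\nu_{k,i}-\nu_{k+1,i})_{+},
\]
and then replace the positive part $(\cdot)_+$ by the absolute value $|\cdot|$ to obtain the cleaner upper bound $\sR_{n,k}\le \sum_{i=1}^{h}\bigl(|\go_{k,i}|+|\nu_{k,i}|+|\nu_{k+1,i}|\bigr)$. Next, applying the triangle inequality for the $L^p$-norm together with the fact that all edge weights $\go_e$ are identically distributed and all vertex weights $\nu_v$ are identically distributed yields
\[
\norm{\sR_{n,k}}_p\le \sum_{i=1}^{h}\Bigl(\norm{\go_{k,i}}_p+\norm{\nu_{k,i}}_p+\norm{\nu_{k+1,i}}_p\Bigr)=h\bigl(\norm{\go_e}_p+2\norm{\nu_v}_p\bigr).
\]
Bounding this by $3h\bigl(\norm{\go_e}_p+\norm{\nu_v}_p\bigr)$ and setting $C:=3h$ (which in this paper's setup is part of the fixed underlying graph $H$, hence a genuine constant from the standpoint of $n$ and $k$) completes the proof.

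There is essentially no serious obstacle here; the only thing to verify is that the non-negativity of $\sR_{n,k}$ is not lost when one passes to the pointwise upper bound, which is already guaranteed by the argument in the text (the restriction $\fm^{\star}_{[1:k]}\cup\fm^{\star}_{[k+1:n]}$ of any optimizer $\fm^{\star}$ is a valid matching on the disconnected union, so that $M_{[1:k]}+M_{[k+1:n]}\ge \cH(\fm^{\star}_{[1:k]})+\cH(\fm^{\star}_{[k+1:n]})$, the deficit being exactly the contribution of the bridging edges of $\fm^{\star}$). Once this is in hand, the remainder is a one-line application of Minkowski's inequality; in particular no Rosenthal-type bound is needed because the number of summands $h$ is independent of $n$.
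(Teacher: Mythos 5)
Your proposal is correct and matches the paper's own argument: the text immediately preceding the lemma establishes $0\le \sR_{n,k}\le \sum_{i}(\go_{k,i}-\nu_{k,i}-\nu_{k+1,i})_{+}$ via the restriction of an optimal matching, and the lemma then follows exactly as you say, by bounding the positive part by absolute values and applying the triangle inequality over the $h$ bridging edges. The only cosmetic remark is that the resulting constant depends on $h$ (a fixed parameter of the graph $H$) rather than on $p$, which is also true of the paper's own derivation.
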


Using the subadditive argument from Theorem~\ref{thm:subadditive}, we have the following corollary.

\begin{cor}
 Let $\norm{\go_{e}}_{1}+\norm{\nu_{v}}_{1}<\infty$, then there exists $m\in \dR$ such that,
 \[
 n^{-1}\E M_{n} \to m \text{ as }n\to \infty. 
 \]
\end{cor}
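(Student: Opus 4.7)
The plan is a direct application of Fekete's subadditive lemma (equivalently Theorem~\ref{thm:subadditive} with $b_n\equiv 0$) to the sequence $a_n:=-\E M_n$, leveraging the decomposition $M_n=M_{[1:k]}+M_{[k+1:n]}+\sR_{n,k}$ with $\sR_{n,k}\ge 0$ together with the uniform $L^{1}$ bound $\norm{\sR_{n,k}}_{1}\le C$ just established in the previous lemma.

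First I would take expectations in the decomposition. Because the vertex and edge weights are i.i.d.\ across the cylinder, the restricted monomer-dimer problems on $\cG_{[1:k]}$ and $\cG_{[k+1:n]}$ are distributionally identical to the unrestricted problems on $\cG_{k}$ and $\cG_{n-k}$, respectively, giving $\E M_{[1:k]}=\E M_{k}$ and $\E M_{[k+1:n]}=\E M_{n-k}$. Combining these identities with $0\le \E\sR_{n,k}\le C$ yields the two-sided bound
\[
\E M_{k}+\E M_{n-k}\le \E M_{n}\le \E M_{k}+\E M_{n-k}+C.
\]
Writing $a_n=-\E M_n$, the left-hand inequality is exactly subadditivity $a_{n}\le a_{k}+a_{n-k}$, to which Fekete's lemma applies and yields $a_n/n\to \inf_{n\ge 1} a_n/n\in[-\infty,\infty)$.

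To rule out the degenerate limit $-\infty$, I would iterate the right-hand inequality with $k=1$: the recursion $a_n\ge a_{n-1}+a_1-C$ gives $a_n\ge na_1-(n-1)C$ by induction, hence $a_n/n\ge a_1-C$ for every $n\ge 1$. Finiteness of $a_1=-\E M_1$ follows from the assumed $L^{1}$ moments on $\nu$ and $\go$ together with the crude bound $\abs{M_1}\le\sum_{v\in V_{\cG_1}}\abs{\nu_v}+\sum_{e\in E_{\cG_1}}\abs{\go_e}$ on the finite graph $\cG_1$. Setting $m:=-\lim_n a_n/n$ completes the proof. No substantive obstacle is expected; the argument is essentially a repackaging of the bound $\norm{\sR_{n,k}}_1\le C$ into the standard subadditive framework, the only minor point being the shift-invariance identification $\E M_{[1:k]}=\E M_k$, which is immediate from the product structure of the weight distribution on the cylinder.
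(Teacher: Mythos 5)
Your proposal is correct and follows essentially the same route as the paper: the decomposition $M_n=M_{[1:k]}+M_{[k+1:n]}+\sR_{n,k}$, the uniform $L^1$ bound on $\sR_{n,k}$, and a subadditive lemma applied to the expected ground state energy. The only cosmetic differences are that you apply Fekete's lemma to $-\E M_n$ (exploiting $\sR_{n,k}\ge 0$ for exact subadditivity) instead of invoking Theorem~\ref{thm:subadditive} with a bounded error sequence, and that you make the finiteness of the limit explicit by iterating the bound with $k=1$, which the paper leaves implicit.
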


Moving to the question of the fluctuations, we establish a variance upper bound using the Efron-Stein inequality. 
\begin{lem}
There exists a constant $C\in (0,\infty)$ such that 
\[
\var{M_{n}}\le C n \text{ for all } n
\]
\end{lem}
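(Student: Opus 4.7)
The plan is to apply the Efron-Stein inequality exactly as in Lemma~\ref{lem:efron}, but exploit the fact that the max operation behaves even more cleanly under a single-coordinate perturbation than the log-partition function does. For each edge $e\in E$, let $M_n^{e}$ denote the ground state energy obtained when $\go_e$ is replaced by an independent copy $\go_e'$; analogously, for each $v\in V$, let $M_n^{v}$ be the ground state energy after replacing $\nu_v$ by $\nu_v'$. The Efron-Stein inequality then gives
\[
\var(M_n) \le \sum_{e\in E} \E\bigl(M_n^{e}-M_n\bigr)^2 + \sum_{v\in V} \E\bigl(M_n^{v}-M_n\bigr)^2.
\]

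The key observation is a pointwise, matching-independent bound on the perturbation. For any fixed matching $\fm\in \cM$, the Hamiltonian $\cH(\fm)$ depends on $\go_e$ only through the indicator $\1_{e\in \fm}$, so $\cH^{e}(\fm)-\cH(\fm)\in\{0,\go_e'-\go_e\}$, and hence $|\cH^{e}(\fm)-\cH(\fm)|\le |\go_e'-\go_e|$ uniformly in $\fm$. Taking maxima over $\cM$ on both sides yields
\[
|M_n^{e}-M_n|\le |\go_e'-\go_e|.
\]
The same reasoning, using that $\cH(\fm)$ depends on $\nu_v$ only through $\1_{v\notin \fm}$, gives $|M_n^{v}-M_n|\le |\nu_v'-\nu_v|$. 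Squaring and taking expectations, $\E(M_n^{e}-M_n)^2\le 2\var(\go)$ and $\E(M_n^{v}-M_n)^2\le 2\var(\nu)$. Summing and using $|V|=nh$ together with $|E|=(n-1)h+n|E_H|=O(n)$ (since $\cG_n=G_n\times H$ with $H$ fixed), we conclude
\[
\var(M_n)\le 2|E|\var(\go)+2|V|\var(\nu)\le Cn
\]
for a constant $C$ depending only on the second moments of $\go$ and $\nu$ and on the fixed graph $H$.

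There is essentially no obstacle here, precisely because the max of a family differing pointwise by at most $\delta$ itself differs by at most $\delta$; this avoids the integral/logarithmic-derivative detour needed in Lemma~\ref{lem:efron}. The only minor subtlety worth noting is that the argmax matching may change when a coordinate is resampled, but this is irrelevant for the bound since our inequality is uniform in $\fm$.
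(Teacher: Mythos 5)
Your proof is correct and follows essentially the same route as the paper: Efron--Stein plus a bound on the single-coordinate influence of each weight, summed over the $O(n)$ edges and vertices. The only cosmetic difference is that the paper bounds the influence via the symmetrization identity $\E(M_n-M_n^{e})^2=2\E\bigl(\1_{\go_e>\go_e'}(M_n-M_n^{e})^2\bigr)$ and monotonicity, whereas you use the direct Lipschitz property of the maximum, $|M_n^{e}-M_n|\le\max_{\fm}|\cH^{e}(\fm)-\cH(\fm)|\le|\go_e-\go_e'|$; both yield the same $Cn$ bound.
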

\begin{proof}
We illustrate the method for the edge weights, as the case of bounding the change with respect to the vertex weights is identical. Let us pick an edge $e \in E_{\cG_{n}}$ with weight $\go_{e}$. We replace $\go_{e}$ with an independent copy $\go_{e}'$. Note that, 
\[ \E(M_{n}-M_{n}^{e})^2=2\E(\1_{\go_e> \go'_e}\cdot(M_{n}-M_{n}^{e})^2)
\]
and it is easy to see that 
\[
0\le \1_{\go_e> \go'_e}\cdot(M_{n}-M_{n}^{e}) \le (\go_e-\go'_e)_+.
\]
Adding over all edges $e$, then repeating the same procedure with the vertex weights completes the proof. 
\end{proof}

We thus have two immediate corollaries.
\begin{cor}\label{cor:Mconv}
 Let $\norm{\go_{e}}_{2}+\norm{\nu_{v}}_{2}<\infty$. Then we have 
 $
 n^{-1} M_{n}\toP m \text{ as } n\to \infty. 
 $
\end{cor}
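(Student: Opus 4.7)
The statement to prove is Corollary~\ref{cor:Mconv}, the convergence in probability $n^{-1}M_n\toP m$ under the second-moment hypothesis. Both ingredients are already in hand: the previous corollary establishes $n^{-1}\E M_n\to m$, and the preceding lemma gives the variance bound $\var(M_n)\le Cn$ for some finite constant $C$ depending only on the second moments of $\go_e$ and $\nu_v$. So the plan is just to combine these two facts via Chebyshev's inequality.

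Concretely, I would first write
\[
 n^{-1}M_n - m = n^{-1}(M_n-\E M_n) + (n^{-1}\E M_n - m),
\]
and observe that the deterministic second term tends to $0$ by the preceding corollary on $n^{-1}\E M_n$. For the first (random) term, I would apply Chebyshev: for any $\eps>0$,
\[
 \pr\!\left(\,\bigl|n^{-1}(M_n-\E M_n)\bigr|>\eps\,\right)
 \le \frac{\var(M_n)}{n^{2}\eps^{2}}
 \le \frac{C}{n\eps^{2}}\longrightarrow 0
 \text{ as }n\to\infty.
\]
Hence $n^{-1}(M_n-\E M_n)\toP 0$, and adding the deterministic mean convergence gives the desired conclusion.

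There is no real obstacle: the only subtlety is making sure the variance lemma is invoked under precisely the $L^{2}$ hypothesis stated in the corollary. Since the Efron--Stein bound used in the variance lemma requires only $\E(\go_e^{2}+\nu_v^{2})<\infty$, this matches the corollary's hypothesis exactly, and no additional moment assumption is needed. The proof therefore reduces to the two displayed lines above.
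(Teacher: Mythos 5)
Your proposal is correct and is exactly the argument the paper intends: combine the mean convergence $n^{-1}\E M_n\to m$ (from the subadditive estimate, which needs only first moments and hence holds under your $L^2$ hypothesis) with the Efron--Stein variance bound $\var(M_n)\le Cn$ via Chebyshev's inequality. Nothing further is needed.
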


\begin{cor}\label{cor:Mvarconv}
 We have $\gs_{M}\in[0,\infty)$ such that 
 \[
 n^{-1}\var M_{n} \to \gs_{M}^2 \text{ as }n\to \infty. 
 \]
\end{cor}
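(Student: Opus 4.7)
The plan is to mimic exactly the strategy used in Lemma \ref{lem:zvarconv} for $\log Z_n$, transplanting it to the ground state energy $M_n$. The two key ingredients are the decomposition $M_n = M_{[1:k]} + M_{[k+1:n]} + \sR_{n,k}$ together with the uniform bound $\norm{\sR_{n,k}}_2 \le C$ from the lemma just above, plus the variance upper bound $\var M_n \le Cn$ from the Efron--Stein estimate. With these, I expect $\var M_n$ to satisfy a near-subadditive recursion to which Hammersley's Theorem \ref{thm:subadditive} applies directly.

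Concretely, first I would center everything: with $\overline{X} := X - \E X$, write
\[
\overline{M_n} = \overline{M_{[1:k]}} + \overline{M_{[k+1:n]}} + \overline{\sR_{n,k}}.
\]
Expanding the $L^2$-norm squared, the crucial point is that $M_{[1:k]}$ and $M_{[k+1:n]}$ depend on disjoint sets of edge and vertex weights, so they are independent and their cross-term vanishes. The remaining cross-terms involve $\overline{\sR_{n,k}}$ and are controlled by Cauchy--Schwarz:
\[
\bigl|\E(\overline{M_{[1:k]}}\cdot \overline{\sR_{n,k}})\bigr| \le \norm{\overline{M_{[1:k]}}}_2\cdot \norm{\overline{\sR_{n,k}}}_2 \le C\sqrt{k}\cdot C',
\]
and similarly with $k$ replaced by $n-k$. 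Combining these and using $\sqrt{k}+\sqrt{n-k} \le \sqrt{2n}$ yields
\[
\var M_n \le \var M_k + \var M_{n-k} + C''\sqrt{n}
\]
for some constant $C''$ independent of $n$ and $k$.

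Now set $a_n := \var M_n$ and $b_n := C''\sqrt{n}$. Since $\sum_{n\ge 1} \sqrt{n}/n^2 < \infty$, Theorem \ref{thm:subadditive} gives $n^{-1}\var M_n \to \gs_M^2$ for some $\gs_M^2 \in [0,\infty)$; finiteness follows from the a priori upper bound $\var M_n \le Cn$, and non-negativity is automatic. This is the claim of Corollary \ref{cor:Mvarconv}.

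I do not anticipate any serious obstacle: the argument is a line-by-line analogue of Lemma \ref{lem:zvarconv}, with $\log Z$ replaced by $M$ and $R_{n,k}$ replaced by $\sR_{n,k}$. The only thing to double-check is the $L^2$ control on $\sR_{n,k}$, but this is supplied by the preceding lemma with $p=2$ under the hypothesis $\E(\go_e^2+\nu_v^2)<\infty$, which is exactly the moment assumption of Theorem \ref{thm:gse}. Note that, in contrast to the free energy case, we do not assert here that $\gs_M^2 > 0$; the statement merely requires $\gs_M^2 \in [0,\infty)$, so no lower bound analogous to Lemma \ref{lem:zvarlowerbound} is needed.
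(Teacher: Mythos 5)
Your proposal is correct and matches the paper's intent exactly: the paper's proof of this corollary is literally ``identical procedure to that of Lemma~\ref{lem:zvarconv},'' i.e.\ the centered decomposition via $\sR_{n,k}$, independence of the two blocks, Cauchy--Schwarz with the uniform $L^2$ bound on $\sR_{n,k}$ and the Efron--Stein bound $\var M_m\le Cm$, followed by Hammersley's subadditive theorem. No gaps.
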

\begin{proof}
Identical procedure to that of Lemma~\ref{lem:zvarconv}. 
\end{proof}

The dyadic subdivision introduced in the proof of Theorem~\ref{thm:Zclt} is easily carried out in this context. Lemma~\ref{thm:rosenthal} may be applied to show that $\norm{\ol{M_{n}}}_{2+\epsilon}\le Cn^{\half}$ whenever the weights satisfy the appropriate moment condition. The Lyapunov condition to prove a central limit theorem holds.

\subsubsection{Proof of Theorem~\ref{thm:gse}}
The proof of the convergence of $n^{-1}M_{n}$ follows from Corollary \ref{cor:Mconv}, the convergence of the variance to $\gs_{M}$ from \ref{cor:Mvarconv}. The $(2+\epsilon)^{\th}$ moment bound follows from a procedure identical to Lemma~\ref{lem:lyapunov}, and the proof of the Central Limit Theorem is identical to Theorem~\ref{thm:Zclt}. We note that we have not established a variance lower bound, if $\gs_{M}=0$, it means that $\|n^{-\half}\cdot (M_{n}-\E M_{n})\|_{2}\to 0$. If on the other hand $\gs_{M}>0$, we have convergence in distribution to $\N(0,\gs_{M}^{2})$
\qed

\section{Exponential Tilting and Lee-Yang Zeroes} \label{sec:LeeYang}
\subsection{Gauge Transformation and Exponential Tilting} We move towards addressing the question of typical behavior of a matching $\fm$ chosen according to $\mu$. We will do this by characterizing the behavior of $U$, the number of unpaired vertices. There is a standard technique in statistical mechanics for computing cumulants of observables via the exponential tilting of the Gibbs measure. Since the observable we are interested in is $U$, let $x\in \dR$ and define the modified Hamiltonian by
\begin{align*}
	\cH_x(\fm):=xU(\fm)+\cH(\fm).
\end{align*}
We then have the modified partition function
\begin{align*}Z_n(x):=\sum_{\fm \in \cM} \exp(\cH_x(\fm))\end{align*}
and finally the tilted measure
\begin{align*}
	\mu_x(\fm):= \frac{\exp(\cH_x(\fm))}{Z_n(x)}.
\end{align*}
Suppose we have a random variable $X:\cM \to \dR$, then the Gibbs average with respect to the tilted measure will be denoted as $\la X\ra_n^x$.
We recover the original measure when $x=0$, and clearly $\la X\ra^{0}_n=\la X\ra_n$. The derivatives of $\log Z_n(x)$ with respect to $x$ yield the cumulants of $U$. In particular, for the first two cumulants, we have
\begin{align}
	\la U \ra^x_n &=\partial_x \log Z_n(x),\label{def:umean}\\
\text{ and }
	\left\la U^{2} \right\ra^x_n-\bigl(\la U\ra_n^x\bigr)^{2} &=\partial_x^{2} \log Z_n(x).\label{def:uvar}
\end{align}
It is equivalent to regard the exponential tilting by $xU(\fm)$ as re-weighting the vertices, for all $u\in V$ $\nu_u$ becomes $\nu_u+x$. In this section, the independence of the edge weights will not be as important, the vertex weights being constant is crucial. By the gauge invariance of the model we may pass the random weights of the vertices onto the edges by successively applying the following gauge transformation at each vertex $u$:
\begin{align*}
	x+\nu_u\to x \text{ and } \go_{(u,v)} \to \go_{(u,v)}-\nu_v \text{ for all } v\sim u.
\end{align*}
The transformed weights on the edges are thus
\begin{align*}
	\tilde{\go}_{(u,v)}=\go_{(u,v)}-\nu_v-\nu_u.
\end{align*}

\subsection{Interlacing and the Empirical Measure} The partition function $Z_n(x)$ is a polynomial in $e^x$. The gauge transformation, in essence is to make this polynomial monic. We define the Lee-Yang zeroes in this context. Let $\tilde{Z}$ denote the gauge transformed partition function which is a monic polynomial of order $N=nh$ in $e^x$. The partition function is real and positive, implying the roots have to exist in conjugate pairs. It is easy to see that for a graph on two vertices, the roots are purely imaginary. Combining~\eqref{def:recur} with an induction argument yields that the roots of $\tilde{Z}_n$ are purely imaginary as well. Thus, we have
\begin{align}
	\tilde{Z}_n(x)=\prod_{i=1}^{N}(e^x+\gl_i\sqrt{-1}).
\end{align}
As an abuse of terminology, moving forward when we refer to the Lee-Yang zeroes, we will be referring to the collection $\{\gl_i\}_{i=1}^{N}$, in non-decreasing order. We recall some results on Lee-Yang zeroes from~\cite{HL}, as mentioned in the introduction. Let $\cG$ denote a weighted graph on $n$ vertices with edge weights $\{\tilde{\go}_e\}_{e\in E_{\cG}}$ and constant vertex weights $x$. Let $u$, $v \in V_{\cG}$ be vertices, and $\cG^{u}$, $\cG^{u,v}$ be the principal subgraphs obtained on the removal of $u$, and both $u$ and $v$ respectively. Then we have the following recurrence relation on the corresponding partition functions
\begin{align}\label{def:recur2}
	Z_{\cG}(x)=\exp(x)\cdot Z_{\cG^{u}}(x)+\sum_{v\sim u}\exp(\tilde{\go}_{u,v})\cdot Z_{\cG^{u,v}}(x).
\end{align}
Let $\{v_1,v_{2},\ldots ,v_{k}\}\subset V_{\cG}$ be a collection of vertices. We sequentially remove them to obtain a sequence of principal subgraphs denoted by $\{\cG^{(j)}\}_{i=0}^k$ with corresponding partition functions $Z^{(j)}(x)$. The recurrence~\eqref{def:recur2} implies an interlacing hierarchy. Let the Lee-Yang zeroes of $\cG^{(j)}$ will be denoted by $\{\gl_{k,j} \}_{k=1}^{n-j}$. We have
\begin{align}\label{def:interlace}
	\gl_{k,j+1}\le \gl_{k,j}\le \gl_{k+1,j+1}.
\end{align}
The interlacing heirarchy is illustrated for the first $5$ layers in Figure~\ref{fig:interlacing}. Also illustrated is the ``cone of comparision'', the directions along which we may use the zeroes in lower levels to obtain lower or upper bounds on the zeroes in the upper levels. This notion will be of prime importance to us. Heilmann and Lieb also proved a localization result for the zeroes in~\cite{HL}, which we state in the form used here. Let $\Delta_u$ denote the weighted degree of a vertex $v$, that is
\begin{align}\label{weightedDegree}
	\Delta_u=\sum_{e\sim u}\exp(\tilde{\go}_e).
\end{align}
Then for all $1\le i\le n$, we have
\begin{align}\label{def:zerobound}
	|\gl_i|\le\sup_{u\in V} \Delta_u.
\end{align}
We note that the distribution of $\Delta_u$ is determined by the degree of $u$, that is if $\text{deg}(u)=\text{deg}(v)$ for vertices $u$ and $v$ then $\Delta_u\equald\Delta_v$. This is because we may write \begin{align*}\Delta_u=\exp(-\nu_u)\sum_{v\sim u}\exp(\go_{(u,v)}-\nu_v)\end{align*}
where all the random variables appearing on the right are independent of each other.

\begin{figure}\label{fig:interlacing}
	\includegraphics[scale=0.6,page=2]{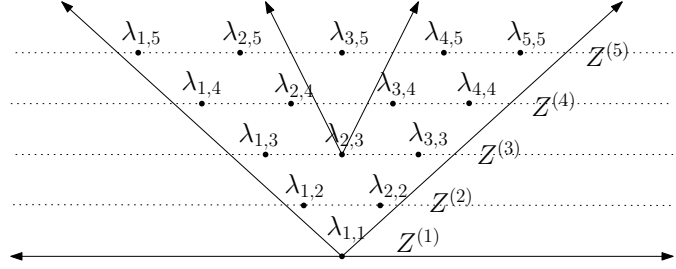}
	\caption{Interlacing shown for the first 5 levels}
\end{figure}

To the roots $\{\gl_i\}_{i=1}^{N}$ we associate the empirical distribution $\rho_n$, a probability measure on the real line given by
\begin{align*}\rho_n=n^{-1}\sum \delta_{\gl_j}.\end{align*} Here, $\delta_x$ denotes the Dirac measure with unit mass centered at $x$. The convergence of relevant thermodynamic quantities such as the cumulants of $U$ can be phrased in terms of the weak convergence of this sequence of empirical counting measures. We have
\begin{align*}
	n^{-1}\log \tilde{Z}_n=\frac{1}{2}\int_{\dR} \log (\gl^2+e^{2x}) d\rho_n(\gl).
\end{align*}
We may rewrite~\eqref{def:umean} and~\eqref{def:uvar} in terms of the empirical distribution of the zeroes as
\begin{align*}
	n^{-1}\la U \ra^x _n & = \int_{\dR} (1+\gl^2e^{-2x})^{-1} d\rho_n(\gl), \\
	\text{ and }
	n^{-1}\left(\left\la U^{2} \right\ra_n-\la U\ra^{2}_n\right)
	 & = \int_{\dR} 2\gl^2e^{-2x}(1+\gl^2e^{-2x})^{-2} d\rho_n(\gl).
\end{align*}
While we may define the tilted measure for all $x \in \dR$, for our purposes it suffices to take $x$ in some compact interval.

\begin{lem}\label{lem:tightness}
	Let $\cG_n$ have maximal degree $d_{max}<\infty$. Then the sequence of empirical measures $\{\rho_n\}$ is tight in probability.
\end{lem}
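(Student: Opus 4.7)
The plan is to combine the Heilmann-Lieb localization bound~\eqref{def:zerobound} with the interlacing hierarchy~\eqref{def:interlace} to reduce tightness of $\{\rho_n\}$ to a tail estimate for the weighted degree $\Delta_u$. Tightness in probability amounts to the following: for every $\gd,\eta>0$ there exists $K>0$, independent of $n$, such that $\pr(\rho_n([-K,K]^c)>\eta)<\gd$. Since $\rho_n$ places mass $n^{-1}$ on each Lee-Yang zero, one has
\[
\rho_n([-K,K]^c)=n^{-1}\cdot\#\{j:|\gl_j|>K\},
\]
and the task reduces to a counting estimate.

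Fix $K>0$ and let $V_K := \{u\in V_{\cG_n}:\Delta_u>K\}$ denote the set of \emph{bad} vertices, and let $\cG_n'$ be the principal subgraph of $\cG_n$ obtained by deleting every vertex in $V_K$. By construction every remaining vertex of $\cG_n'$ has weighted degree at most $K$, so applying~\eqref{def:zerobound} to $\cG_n'$ places all of its Lee-Yang zeros inside $[-K,K]$. I then plan to show, by induction on $|V_K|$ using the one-step interlacing~\eqref{def:interlace}, that after deleting $|V_K|$ vertices the ordered zeros of $\cG_n$ and those of $\cG_n'$ satisfy a Cauchy-type interlacing relation with index shift at most $|V_K|$. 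Consequently at most $2|V_K|$ of the zeros of $\cG_n$ can lie outside $[-K,K]$, with at most $|V_K|$ stragglers at each end of the spectrum. This yields the deterministic bound
\[
\rho_n([-K,K]^c)\le \frac{2|V_K|}{n}.
\]

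To finish, it remains to control $|V_K|/n$ in probability. The random variable $\Delta_u=\sum_{v\sim u}\exp(\tilde{\go}_{(u,v)})$ is a sum of at most $d_{max}$ almost-surely finite random variables, so $\pr(\Delta_u>K)\to 0$ as $K\to\infty$. Grouping the vertices of $\cG_n$ into the finitely many degree classes (possible since $d_{max}<\infty$), the $\Delta_u$ are identically distributed within each class, giving
\[
\E[|V_K|]\le nh\cdot p(K), \qquad p(K):=\max_{d\le d_{max}}\pr(\Delta_u>K\mid \deg u = d),
\]
where $p(K)\to 0$ as $K\to\infty$. A Markov inequality on $|V_K|/n$ then yields $\pr(|V_K|/n>\eta/2)\le 2hp(K)/\eta<\gd$ for $K$ large enough, uniformly in $n$, which combined with the previous display completes the tightness argument.

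The only step requiring genuine care is the iterated interlacing bound in the middle paragraph; the rest is a routine Markov tail estimate. Notably, this approach requires only that $\go_e$ and $\nu_v$ be almost-surely finite, substantially weaker than any exponential moment assumption needed for a direct second-moment bound on the zeros via $\sum_j \gl_j^2 = 2\sum_e e^{\tilde{\go}_e}$.
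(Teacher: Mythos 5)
Your proof is correct and follows essentially the same route as the paper: delete the vertices of large weighted degree, localize the zeros of the resulting principal subgraph via \eqref{def:zerobound}, transfer back to $\cG_n$ by iterating the interlacing \eqref{def:interlace}, and control the number of deleted vertices probabilistically. The only differences are cosmetic: you use a first-moment/Markov bound on $|V_K|$ where the paper uses a second-moment/Chebyshev bound (yours is simpler and suffices for tightness in probability), and with the paper's orientation of \eqref{def:interlace} the straggler count is $2|V_K|$ per end of the spectrum rather than $|V_K|$, which is immaterial since any constant multiple of $|V_K|/n$ completes the argument.
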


\begin{proof}
	We begin by constructing a sequence of principal subgraphs of $\cG_n$ whose corresponding Lee-Yang zeroes satisfy the interlacing condition in~\eqref{def:interlace}.
	Let $K \in (0,\infty)$ denote a cutoff. Recall the definition of $\Delta_u$ from~\eqref{weightedDegree}, the weighted degree of a vertex $u$. We remove all vertices $u$ with $\Delta_u\ge K$ from $\cG_n$, and denote the principal subgraph on the remaining vertices as $\cG^{K}$. Clearly, as $K$ increases, the vertices are added in one by one until we obtain the full graph $\cG$ as $K \to \infty$. Let $N_K$ denote the number of vertices in $\cG^{K}$. Clearly, \begin{align*}N_K=\sum_{v\in V}\1_{\Delta_v<K}.\end{align*}
	Now, let $u$ and $v$ be two distinct vertices. Via a coupling argument, it is not hard to show that if $\text{deg}(u)>\text{deg}(v)$, then $\pr(\Delta_u<K)\le \pr(\Delta_v<K)$. Let $\Delta_{max}$ denote a random variable equal in distribution to the weighted degree of the vertex with maximum degree. We find the following bounds for the mean and variance

	\begin{align*}N\pr\left(\Delta_{max}< K \right)\le \E N_K \le N \pr\left(\tilde{\go}_e< \log K \right).\end{align*}
	Since $N=nh$ and $\abs{ E_{\cG_n} }\le (d_{max}+1)nh/2$,

	\begin{align*}
		\var(N_K)\le \sum_{v \in V} \var(\1_{\Delta_v< K}) + \sum_{v\sim w} \cov(\1_{\Delta_v< K},\1_{\Delta_{w}< K}) \le (d_{max}+3)nh/2.\end{align*}
	These estimates together yield concentration via Chebyshev's inequality:
	\begin{align*} 
	\pr \left(\abs{ N_K-\E N_K }\ge \eps n\right)\le \frac{(d_{max}+3)h}{2\eps^2 n}.
	\end{align*}
	What this means in particular is that for $\eps >0$ with probability approaching $1$, \begin{align*}N_K/n\ge \pr\left(\Delta_{max}<K\right)-\eps.\end{align*}
	We do not go into the detail of the distribution of $\Delta_{max}$. This is because for proving tightness of $\rho_n$ all that is required is that $\Delta_{max}$ is a tight random variable which is clear since $d_{max}$ is bounded. The localization of the zeroes relates the weighted degree of the vertices to the absolute values of the Lee-Yang zeroes. By hypothesis, on $\cG^{K}$ the weighted degrees of all the vertices are bounded above by $K$, thus all Lee-Yang zeroes corresponding to $Z_{\cG^{K}}$ are located in the interval $[-K,K]$. Let us now take $K$ and $K+\delta$ such that exactly one vertex is added, that is $N_{K+\delta}=N_K+1$. Let the Lee-Yang zeroes of the corresponding partition functions be denoted as $\{\gl_{k,K+\delta}\}_{k=1}^{N_{K+\delta}}$ and $\{\gl_{k,K}\}_{k=1}^{N_K}$. The interlacing~\eqref{def:interlace} yields that
	\begin{align*}\gl_{N_K,K}\ge \gl_{N_K,K+\delta}.\end{align*} Extending this inductively, let $\{\gl_{k}\}^{N}_{k=1}$ denote the Lee-Yang zeroes of the partition function corresponding to $\cG_n$. We have
	\begin{align*}
		\gl_{N_K}\le \gl_{N_K,K}\le K.
	\end{align*}

	Thus, $2(n-N_K)/n$ is an upper bound for the fraction of Lee-Yang zeroes of $Z_{\cG_n}(x)$ such that $|\gl|> K$. Given an $\eps > 0$, we can choose $K$ such that $2(n-N_K)/n\le \eps$ with probability approaching 1. This is exactly $\rho_n[-K,K]^{c}$, which establishes tightness.
\end{proof}

The tightness condition allows us to rule out the possibility of the mass of $\rho_{n}$ leaking away to infinity. We may also prove that it is impossible for the mass of $\rho_{n}$ to be entirely concentrated on $0$.

\begin{lem}\label{lem:zeroanticonc}
There exists $\eps,\gd>0$ such that with probability approaching $1$, 
\[
\rho_{n}([\gd,\infty))\ge \eps \text{ for all } n.
\]
\end{lem}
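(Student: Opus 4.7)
The plan is to exploit the Lee-Yang factorisation $\tilde Z_n(x)^2 = \prod_i(e^{2x}+\lambda_i^2)$, which holds because $\tilde Z_n$ is real and positive and the roots come in $\pm$ pairs. At $x=0$ it yields the identity
\[
\int_{\dR} \log(1+\lambda^2)\, d\rho_n(\lambda) = \frac{2}{n}\log \tilde Z_n(0).
\]
If $\rho_n$ were concentrated near zero the left-hand side would be small, so it suffices to establish a linear-order lower bound $n^{-1}\log \tilde Z_n(0) \geq c$ for some fixed $c>0$ with probability tending to $1$, and then combine with the localisation from Lemma~\ref{lem:tightness} to push mass of $\rho_n$ outside a small interval around the origin.

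For the lower bound on $\log \tilde Z_n(0)$ I fix a pairwise vertex-disjoint family of edges $\mathfrak{m}_0 \subset E_{\cG_n}$ of size $k=h\lfloor n/2\rfloor$, e.g.\ $\mathfrak{m}_0 = \{((2i-1,j),(2i,j)) : 1\leq i\leq \lfloor n/2\rfloor,\ j\in V_H\}$. Every subset of $\mathfrak{m}_0$ is again a matching of $\cG_n$, so restricting the sum defining $\tilde Z_n(0)$ to subsets of $\mathfrak{m}_0$ gives
\[
\tilde Z_n(0)\ \geq\ \sum_{S\subseteq \mathfrak{m}_0}\prod_{e\in S} e^{\tilde\omega_e}\ =\ \prod_{e\in\mathfrak{m}_0}\bigl(1+e^{\tilde\omega_e}\bigr),
\]
hence $\log \tilde Z_n(0) \geq \sum_{e\in\mathfrak{m}_0}\log(1+e^{\tilde\omega_e})$. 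Because the edges of $\mathfrak{m}_0$ share no vertex, the summands are i.i.d.; and the key (essentially trivial) observation is that $\log(1+e^x)>0$ for every $x\in \dR$, so the common mean $\alpha := \E\log(1+e^{\tilde\omega_e})$ is strictly positive. It is also finite since $\log(1+e^x)\leq 1+|x|$ and $\E|\tilde\omega_e|<\infty$ by the moment assumption on $\omega_e,\nu_v$. The law of large numbers then gives $n^{-1}\log \tilde Z_n(0) \geq \alpha h/4$ with probability tending to $1$.

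By Lemma~\ref{lem:tightness} there is a fixed $K>0$ with $\rho_n\subseteq [-K,K]$ with probability tending to $1$. On this event, for any $\delta\in (0,1)$,
\[
\int \log(1+\lambda^2)\,d\rho_n \leq h\log(1+\delta^2) + \rho_n(\{|\lambda|>\delta\})\log(1+K^2),
\]
and combining this with the identity above yields $\rho_n(\{|\lambda|>\delta\}) \geq \bigl(\alpha h/2 - h\log(1+\delta^2)\bigr)/\log(1+K^2)$ with probability tending to $1$. Choosing $\delta>0$ small enough that $h\log(1+\delta^2)<\alpha h/4$ makes the right-hand side a strictly positive constant, and the symmetry $\rho_n(\{|\lambda|>\delta\}) = 2\rho_n((\delta,\infty))$ then produces the required lower bound on $\rho_n([\delta,\infty))$. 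The only genuinely delicate point is the linear lower bound on $\log\tilde Z_n(0)$: a single-matching bound produces linear growth only when $\E\omega>2\E\nu$, whereas summing over all subsets of a disjoint-edge family yields a sum whose mean is automatically positive, making the argument robust to the signs of $\E\omega$ and $\E\nu$.
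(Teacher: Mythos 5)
Your route is genuinely different from the paper's: the paper exhibits an explicit principal subgraph made of disjoint dimers whose Lee--Yang zeroes are exactly $\pm e^{\tilde{\go}_e/2}$ and pushes the resulting lower bound on the zeroes through the interlacing hierarchy, whereas you argue through the integral identity $\int\log(1+\gl^2)\,d\rho_n=\tfrac{2}{n}\log\tilde Z_n(0)$. The first half of your argument is correct and clean: restricting the partition sum to subsets of a vertex-disjoint edge family gives i.i.d.\ positive summands $\log(1+e^{\tilde\go_e})$, hence a linear lower bound $n^{-1}\log\tilde Z_n(0)\ge \alpha h/4$ with probability tending to one.

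The gap is in the second half. Lemma~\ref{lem:tightness} gives tightness of $\{\rho_n\}$, i.e.\ for each $\eta>0$ a $K$ with $\rho_n([-K,K]^c)\le\eta$ w.h.p.; it does \emph{not} give that all zeroes lie in $[-K,K]$, and under the standing assumption of only $(2+\eps)$ moments on $\go,\nu$ they need not: the localization bound~\eqref{def:zerobound} controls $|\gl_i|$ by $\sup_u\Delta_u$, a maximum of exponentials of the weights, which can be of size $e^{cn^{1/(2+\eps)}}$. Consequently the step
\[
\int\log(1+\gl^2)\,d\rho_n\le h\log(1+\gd^2)+\rho_n(\{|\gl|>\gd\})\log(1+K^2)
\]
is false in general, because zeroes with $|\gl|>K$ contribute $\log(1+\gl^2)$, not $\log(1+K^2)$. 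This is not cosmetic: a fraction $o(1)$ of zeroes of size $e^{cn^{\kappa}}$ already contributes order one to $\int\log(1+\gl^2)\,d\rho_n$, so your lower bound $\tfrac2n\log\tilde Z_n(0)\ge\alpha h/2$ is perfectly consistent with $\rho_n([\gd,\infty))\to0$, and the conclusion does not follow. Closing the gap requires a uniform-integrability statement --- that $\int_{|\gl|>K}\log(1+\gl^2)\,d\rho_n$ is small w.h.p., uniformly in $n$, for $K$ large --- which neither tightness nor~\eqref{def:zerobound} supplies, and which would itself need a per-zero comparison with the weighted degrees, i.e.\ an interlacing argument of essentially the same nature as the paper's. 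Your proof does go through verbatim when the weights are bounded, since then all zeroes are localized to a fixed compact set; but the lemma is asserted under the general moment hypothesis.
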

\begin{proof}
We focus on a particular principal subgraph of $\cG_n$, which consists of disjoint connected components of size $2$. All vertices $v$ in $\cG_n$ are of the form $v=(g,h)$ where $g\in V_{G}$ and $h\in V_{H}$ are the $G$ and $H$ coordinates, respectively. 
	We fix a $H$-coordinate, say $h^\star\in V_{H}$. Let 
	\[
	 \cV'=\{((i,h^\star),(i+1,h^\star)): i=1\text{ mod }2\}
	\]
	be a collection of $\lfloor n/2\rfloor$ many disjoint edges. 
	We obtain a sequence of principal subgraphs of $\cG_n$ by sequentially removing all vertices not in $\cV'$, until only the vertices present in $\cV'$ are left. We denote the resulting subgraph as $\cG_{\cV'}$, and the size of its vertex set as $N_{\cV'}$. By construction, 
	\begin{align*}
	 N_{\cV'}= 2\lfloor n/2 \rfloor.
	\end{align*}
	Note that each connected component of $\cG_{\cV'}$ is of size $2$, that is just a pair of vertices connected by an edge. Consider $K>0$ such that $p:=\pr(\tilde{\go}_e \ge -K)>0$. Define $\gd:=\exp(-K/2)$. We sequentially remove the vertices adjacent to edges with edge weight $\tilde{\go}<-K$. We denote the resulting subgraph as $\cG_{\cV}$ and the size of its vertex set as $N_{\cV}$. As a standard consequence of the concentration of Binomial random variables, for every $\eps>0$ we have with probability approaching $1$, we have
	\[
		N_{\cV}>N_{\cV'}\cdot (p-\eps).
	\]
	The connected components of $\cG_{\cV}$ have size 2. Thus, the partition function $Z_{\cG_{\cV}}$ factors into a product of the partition functions of the respective connected components. We denote the Lee-Yang zeroes of $Z_{\cG_{\cV}}$ as $\{\gl_{i,\cV}\}_{i=1}^{N_{\cV}}$. Now, if $G'$ is a graph on two vertices with edge weight $\tilde{\go}$ and vertex weights $x$, it is easy to see that the partition function $Z_{G'}$ must be of the form
	\begin{align*}
		Z_{G'}(x)=\exp({2x})+\exp({\tilde{\go}})=\left(\exp({x})+\sqrt{-\exp(\tilde{\go})}\right)\left(\exp(x)-\sqrt{-\exp(\tilde{\go})}\right)
	\end{align*}
	with Lee-Yang zeroes given by $\pm \exp(\tilde{\go}/2)$. 
	Given the product structure of $Z_{\cG_{\cV}}$, it follows that $|\gl_{i,\cV}|\ge e^{-K/2}=\gd$ for all $1\le i\le N_{\cV}$. Let $\ell=N_{\cV}/2$.

	Via the interlacing hierarchy, we obtain that
	\begin{align*}
		\gl_{\ell+N-\cN_{\cV}}\ge \gl_{\ell,\cV}\ge \gd
	\end{align*}
	where $\{\gl_i\}_{i=1}^{N}$ are the Lee-Yang zeroes of $Z_n(x)$. Further, this means that there are at least $N_{\cV}-\ell$ zeroes that exceed $\gd$. With probability approaching $1$, we have
	\begin{align*}
		\rho_n([\gd,\infty))\ge \frac{N_{\cV}}{2N}\ge \frac{2\lfloor n/2\rfloor (p-p/2)}{2nh}>\frac{p}{4h}-\frac{p}{2nh}.
	\end{align*}
	To conclude, we take $\eps=p/(4h)$. 
\end{proof}

\subsection{Cumulant Comparison}

In addition to establishing tightness, interlacing establishes a bound on the influence of removal of a vertex on the cumulants of $U$.
\begin{lem}\label{lem:vertexremovalbound}
	Let $\cG$ be any graph with weighted edges and vertex weights given by $x$, and let $N=|V_{\cG}|$. Let $V'\subset V_{\cG}$ denote a collection of vertices. Let $\cG^{V'}$ denote the principal subgraph obtained on removal of all vertices in $V'$, and let $Z^{V'}(x)$ denote the corresponding partition function. We have a finite constant $C$ depending on $x$ and $i$ such that
	\begin{align*}\abs{ \partial_x^i\log Z(x)-\partial_x^i\log Z^{V'}(x) }\le C|V'|.\end{align*}
\end{lem}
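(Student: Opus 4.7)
The plan is to reduce the bound to the removal of a single vertex and then iterate $|V'|$ times. Fix $u\in V_\cG$; the goal is a constant $C_0$ depending on $x$, $i$, and the Heilmann--Lieb localization bound $K:=\sup_v \gD_v$ from~\eqref{def:zerobound} such that $|\partial_x^i\log Z(x)-\partial_x^i \log Z^{u}(x)|\le C_0$. After the gauge transformation of Lemma~\ref{lem:gauge} the partition function is monic in $e^x$ and factors as $\tilde Z(x)=\prod_{k=1}^N(e^x+i\gl_k)$ with real $\gl_k$ coming in $\pm$ pairs (so that $\log Z$ is real). Consequently
\[\partial_x^i\log Z(x)=\sum_{k=1}^N \tilde g_i(\gl_k), \qquad \tilde g_i(\gl):=\tfrac12\partial_x^i\log(e^{2x}+\gl^2).\]
Since $\tilde g_i$ is $C^\infty$ on $\dR$ and~\eqref{def:zerobound} confines all $\gl_k$ to $[-K,K]$, it has finite total variation on $[-K,K]$; I decompose $\tilde g_i=\tilde g_i^+-\tilde g_i^-$ as a difference of monotone increasing functions, each bounded on $[-K,K]$ in terms of $x$, $i$, $K$.

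For the single-vertex step, let $\gl_1\le\dots\le\gl_N$ and $\mu_1\le\dots\le\mu_{N-1}$ be the zeros of $\tilde Z$ and $\tilde Z^u$. The interlacing~\eqref{def:interlace} allows pairing $\mu_k$ with $\gl_k$ so that $\gl_k$ and $\mu_k$ lie in a common interval of consecutive $\gl$'s; writing
\[\partial_x^i\log Z(x)-\partial_x^i \log Z^{u}(x)=\tilde g_i(\gl_N)+\sum_{k=1}^{N-1}\bigl[\tilde g_i(\gl_k)-\tilde g_i(\mu_k)\bigr],\]
the monotone decomposition makes each bracket sign-definite for $\tilde g_i^\pm$, and the resulting sum telescopes to at most $2\sup_{[-K,K]}|\tilde g_i^\pm|$ in absolute value. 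Combining this with $|\tilde g_i(\gl_N)|$ and the triangle inequality yields the constant $C_0$.

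To finish, iterate the single-vertex estimate over the $|V'|$ vertex removals. Since removing a vertex can only decrease the weighted degree of each remaining vertex, the localization bound $K$ from the original graph $\cG$ remains a valid zero-confinement bound at every intermediate subgraph, and the per-step constant $C_0$ is uniform throughout. The triangle inequality then gives $|\partial_x^i\log Z(x)-\partial_x^i \log Z^{V'}(x)|\le C_0|V'|=C|V'|$. The main obstacle is the single-vertex estimate: the interlacing lives on the real line, but $\tilde g_i$ is not monotone on $[-K,K]$ (for $i=1$, for instance, it peaks at $\gl=0$), so a naive telescoping fails and the bounded-variation decomposition is essential. The stated dependence of $C$ on $x$ and $i$ implicitly includes dependence on $K$, since $K$ is determined by the graph and its weights.
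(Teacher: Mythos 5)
Your overall strategy coincides with the paper's: sequential single-vertex removal, interlacing of the Lee--Yang zeroes between consecutive principal subgraphs, and a bounded-variation/telescoping estimate for the function $\tilde g_i(\gl)=\tfrac12\partial_x^i\log(e^{2x}+\gl^2)$ (which is the paper's $f^i(x,\gl)$). The single-vertex telescoping itself is sound. However, there is a genuine gap in how you obtain the bounded-variation bound: you confine the zeroes to $[-K,K]$ via the Heilmann--Lieb localization with $K=\sup_v\gD_v$, and your constant $C_0$ then depends on $K$ --- you even state explicitly that the constant "implicitly includes dependence on $K$, since $K$ is determined by the graph and its weights." This proves a weaker statement than the lemma requires. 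The lemma's constant must depend \emph{only} on $x$ and $i$, not on the graph or its weights, because downstream (Lemmas~\ref{lem:ratioderbound} and~\ref{lem:cutbound}, and hence the cumulant bounds feeding the CLTs) it is applied to cylinder graphs with i.i.d.\ weights satisfying only moment conditions. There $\sup_v\gD_v$ is a random variable that is unbounded over the realizations (and grows with $n$), so a $K$-dependent constant would not yield the deterministic, $n$-uniform bound $|\partial_{\mv x}^{\mv i}R_{n,k}|\le C$ that the later arguments need.

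The fix is to dispense with localization entirely and bound the total variation of $\tilde g_i(\cdot)$ over all of $\dR$, which is what the paper does: on a compact $\gl$-window $[-M,M]$ (with $M$ depending only on $x$ and $i$) the derivative $\partial_\gl \tilde g_i$ is bounded, so the variation there is at most $2M\norm{\partial_\gl \tilde g_i}_\infty$; outside that window $\tilde g_i$ is eventually monotone and decays to $0$, so the variation in the tails is at most $2\norm{\tilde g_i}_\infty$. Since $\tilde g_i$ is a bounded rational function of $\gl^2e^{-2x}$ vanishing at infinity, its total variation on $\dR$ is finite and depends only on $x$ and $i$. Your telescoping argument then goes through verbatim with the zeroes ranging over all of $\dR$, and the resulting per-step constant has the correct dependence.
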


\begin{proof}
	Let us enumerate the vertices in $V'$ as $\{v_1,v_{2},\ldots, v_{|V'|}\}$. We then sequentially remove the vertices, obtaining a sequence of partition functions whose underlying graphs differ by the removal of a single vertex. We denote the partition function after the $j^{\th}$ removal by $Z^{(j)}(x)$ for all $1\le j\le |V'|$. Clearly, $Z^{(|V'|)}(x):=Z^{V'}(x)$. The Lee-Yang zeroes of $Z^{(j)}(x)$, denoted $\gl_{k,j}$, interlace those of $Z^{(j-1)}(x)$. For ease of notation, we denote
	\begin{align}\label{def:fidef}
		f^i(x,\gl):=\partial_x^{i-1}(1+\gl^2e^{-2x})^{-1} \text{ for } i\ge 1, x,\gl\in\dR.
	\end{align}
	Expressing the $i^{\th}$ cumulant in terms of the empirical measures, we have that
	\begin{align*}
		\partial_x^i \log Z^{(j)}(x)=\sum_{k=1}^{N-j} f^i(x,\gl_{k,j}).
	\end{align*}
	It will be convenient to merge the sequence of zeroes corresponding to $Z^{(j)}$ and $Z^{(j+1)}$ into a single sequence. We use the convention that $\gc_{0,j}=-\infty$ and define the combined sequence $\gc_{k,j}$ such that
	\begin{align*}
		\gc_{k,j}
		:=\begin{cases}
			\gl_{(k+1)/2, j} & \text{ for } k \text{ odd} \\
			\gl_{k/2, j+1} & \text{ for } k \text{ even}.
		\end{cases}
	\end{align*}
	By the interlacing property~\eqref{def:interlace}, the combined sequence $\gc_{k,j}$ is non-decreasing in $k$, and symmetric about $0$. With these definitions we have
	\begin{align}\label{def:reindexinterlacesum}
		\begin{split}
			\abs{ \partial^i_x \log Z^{(j)}(x) -\partial_x^i\log Z^{(j+1)}(x) }
			& =\abs{ \sum_{l=0}^{N-j-1}\bigl( f^i\left(x,\gc_{2l+1,j} \right)-f^i\left(x,\gc_{2l,j} \right)\bigr) } \\
			&
			\le \sum_{k=0}^{2N-2j-1}\abs{ f^i\left(x,\gc_{k+1,j}\right)-f^i\left(x,\gc_{k,j}\right) }.
		\end{split}
	\end{align}
	Note that $\{-\infty,\gc_{0,j},\gc_{1,j},\gc_{2,j},\ldots,\gc_{2N-2j-1,j},\infty\}$ is a tagged partition of $(-\infty,\infty)$.	We aim to bound the variation of $f^i$ with respect to this partition. Fix $K<\infty$, it is easy to see that for $(x,\gl)\in [-K,K]\times \dR$, $|f^i|$ and $|\partial_{\gl}f^i|$ are both bounded. Furthermore for $1\le i\le 3$, $f^i$ decay to $0$ as $|\gl|\to {\infty}$, monotonically for sufficiently large $\gl$.
	Let $M_K$ be chosen such that for $|\gl|>M_K$, $f^i$ monotonically decays to $0$ as $|\gl|\to \infty$ for all $i$ and $x\in [-K,K]$. Note that $\gc_{N-j,j}=0$ by symmetry. We define
	\begin{align*}k^{*}:=\min\{k\mid \gc_{k,j}\ge M_K\}.
	\end{align*}
	By hypothesis, on $[M_K,\infty)$, $f^i$ is bounded and monotonically converges to $0$. Thus,
	\begin{align*}
		\sum_{k=k^{*}}^{2N-2j-1}\abs{ f^i\left(x,\gc_{k+1,j}\right)-f^i\left(x,\gc_{k,j}\right) }\le \norm{f^i}_{\infty}.
	\end{align*}
	Next, on $[0,M_K]$ we have that $\norm{\partial_{\gl}f}_{\infty}$ is finite and thus $f^i$ is of bounded variation. In particular, we have
	\begin{align*}
		\sum_{k=N-j}^{k^{*}}\abs{ f^i\left(x,\gc_{k+1,j}\right)-f^i\left(x,\gc_{k,j}\right) }\le M_K\norm{\partial_{\gl}f}_{\infty}.
	\end{align*}
	Combining the above bounds and using the symmetry about $0$, we finally get
	\begin{align*}
		\abs{ \partial^i_x \log Z^{(j)}(x) -\partial_x^i\log Z^{(j+1)}(x) }\le 2\norm{f}_{\infty} +2M_K\norm{\partial_{\gl}f}_{\infty}.
	\end{align*}
	We apply the triangle inequality to the following sum
	\begin{align*}
		\partial^i_x \log Z -\partial_x^i\log Z^{V'}
		=\sum_{j=0}^{|V'|} 
		\bigl(\partial^i_x \log Z^{(j)} -\partial_x^i\log Z^{(j+1)}\bigr),
	\end{align*}
	to complete the proof.
\end{proof}

When examining the variance of $\la U\ra$ as well as the influence of removal of edges on the joint cumulants of the number of unpaired vertices on disjoint blocks the following ratio bound will be useful.
\begin{lem}\label{lem:ratioderbound}We have $K_{i,V'}$ depending only on $i$, $|V'|$ and $x$ such that
	\begin{align*}\abs{ \partial_x^i(Z^{V'}/Z) }\le K_{i,V'}\abs{ (Z^{V'}/Z) }.\end{align*}

\end{lem}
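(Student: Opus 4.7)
The plan is to pass from $\phi(x) := Z^{V'}(x)/Z(x)$ to $h(x) := \log \phi(x) = \log Z^{V'}(x) - \log Z(x)$, so that $\phi = e^h$, and reduce the desired bound to a bound on the derivatives of $h$, which is precisely the content of Lemma~\ref{lem:vertexremovalbound}. Differentiating the exponential $i$ times using the complete Bell polynomial form of di Bruno's formula (the dual of Lemma~\ref{lem:MomentCumulant}, expressing moments from cumulants rather than the other way around), one obtains
\[
 \partial_x^i \phi \;=\; \phi \cdot B_i\bigl(h^{(1)}, h^{(2)}, \ldots, h^{(i)}\bigr),
\]
where $B_i$ denotes the $i$-th complete Bell polynomial. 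Its explicit form
\[
 B_i(x_1,\ldots,x_i) \;=\; \sum_{\substack{m_1,\ldots,m_i\ge 0\\ m_1+2m_2+\cdots+im_i=i}} \frac{i!}{\prod_{j} m_j!\,(j!)^{m_j}} \prod_{j=1}^i x_j^{m_j}
\]
has only non-negative coefficients.

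Because every coefficient of $B_i$ is non-negative, the triangle inequality applied monomial by monomial yields
\[
 \bigl|\partial_x^i \phi\bigr| \;\le\; |\phi| \cdot B_i\bigl(|h^{(1)}|, |h^{(2)}|, \ldots, |h^{(i)}|\bigr).
\]
We now invoke Lemma~\ref{lem:vertexremovalbound}: for every $j \ge 1$,
\[
 |h^{(j)}(x)| \;=\; \bigl|\partial_x^j \log Z^{V'}(x) - \partial_x^j \log Z(x)\bigr| \;\le\; C_j\,|V'|,
\]
with $C_j$ depending only on $j$ and $x$. Substituting into the previous bound,
\[
 \bigl|\partial_x^i \phi\bigr| \;\le\; |\phi|\cdot B_i\bigl(C_1 |V'|, \ldots, C_i |V'|\bigr) \;=:\; K_{i,V'} \cdot |\phi|,
\]
where $K_{i,V'}$ is a polynomial of degree at most $i$ in $|V'|$ whose coefficients depend only on $i$ and $x$. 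This is exactly the claim.

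There is no serious analytical obstacle here; the bulk of the work has been absorbed into Lemma~\ref{lem:vertexremovalbound}, and the present step is essentially bookkeeping that exploits the positivity of Bell polynomial coefficients so that the triangle inequality can be pushed through without cancellation losses. The only mild point deserving attention is ensuring that the constants $C_j$ of Lemma~\ref{lem:vertexremovalbound} are indeed available for every $j \in \{1,\ldots,i\}$ and are locally uniform in $x$, which they are, as the interlacing-and-telescoping argument used there treats each derivative order separately.
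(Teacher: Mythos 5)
Your proof is correct and takes essentially the same route as the paper: both reduce the claim entirely to Lemma~\ref{lem:vertexremovalbound}, the paper via the recursion $\partial_x^i(Z^{V'}/Z)=\sum_{k=0}^{i-1}\binom{i-1}{k}\partial_x^k(Z^{V'}/Z)\cdot\partial_x^{i-k}\log(Z^{V'}/Z)$ followed by induction, and you via the closed-form complete Bell polynomial expansion of $e^{h}$, which is just the unrolled solution of that same recursion. Your explicit appeal to the non-negativity of the Bell polynomial coefficients cleanly justifies the termwise triangle inequality that the paper leaves implicit in its induction step.
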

\begin{proof}
	Note that, for all $i\ge 1$ we have
	\begin{align*}\partial^i_x(Z^{V'}/Z)
		=\partial_x^{i-1}\left( (Z^{V'}/Z)\partial_x\log (Z^{V'}/Z)\right)
		=\sum_{k=0}^{i-1}\begin{pmatrix} i-1 \\ k\end{pmatrix}\partial^k_x(Z^{V'}/Z)\cdot \partial^{i-k}_x\log (Z^{V'}/Z).
	\end{align*}
	For $i=1$, the Lemma follows immediately from Lemma~\ref{lem:vertexremovalbound} as
	\begin{align*}
		\abs{ \partial_x(Z^{V'}/Z) }=\abs{ (Z^{V'}/Z)\partial_x \log (Z^{V'}/Z) }\le C\abs{ (Z^{V'}/Z) }.
	\end{align*}
	We can now use induction along with Lemma~\ref{lem:vertexremovalbound} and Leibniz rule to complete the proof.
\end{proof}
The interlacing also enables us to examine the spatial statistics of the unpaired vertices. We exponentially tilt the measure so as to compute the cumulants of $U_{[1:k]}$ and $U_{[k+1:n]}$ for $1<k<n$. With $x_1$ and $x_{2}\in \dR$, we define the modified Hamiltonian:
\begin{align*}
	\cH_{x_1,x_{2}}(\fm):=\cH(\fm)+x_1U_{[1:k]}(\fm)+x_{2}U_{[k+1:n]}(\fm)
	,\end{align*}
the modified partition function
\begin{align*}
	Z_n(x_1,x_{2}):=\sum_{\fm\in \cM} \exp(\cH_{x_1,x_{2}}(\fm))
	,\end{align*}
and finally the tilted Gibbs measure
\begin{align*}
	\mu_{x_1,x_2}(\fm):=\frac{\exp(\cH_{x_1,x_{2}}(\fm))}{Z_n(x_1,x_{2})}
	.\end{align*}
As with the case of $Z_n(x)$ an alternative viewpoint is to regard the tilting as a re-weighting of the vertices, where the transformed weights are such that all vertices $u\in V_{\cG_{[1:k]}}$ carry the weight $x_1+\nu_u$ while all vertices $v \in V_{\cG_{[k+1:n]}}$ carry the weight $x_{2}+\nu_v$. The gauge transformation process is also identical, in order to pass the random vertex weights onto the edges. $Z_{[1:k]}$ and $Z_{[k+1:n]}$ will be denoted by $Z_1$ and $Z_{2}$ respectively for ease of notation. Thus,
\begin{align*}
	Z_1=Z_1(x_1) \text{ and } Z_{2}=Z_{2}(x_{2}).
\end{align*}

As introduced in~\eqref{def:Decomp}, $\cE_{k}$ denotes the collection of edges that bridge $\cG_{[1:k]}$ and $\cG_{[k+1:n]}$. We will use $A$ to denote an arbitrary subset of $\cE_{k}$. Let $Z_1^{A}$ denote the partition function on the principal subgraph of $\cG_{[1:k]}$ obtained on removal of all vertices incident to $A$. Recall the remainder term $R_{n,k}$ introduced in~\eqref{def:Decomp}, with notation $\cE_{k}$, $A$ and $Z^{A}$ as above:
\begin{align*}
	R_{n,k}=\log \left( \sum_{ A \subseteq \cE}\prod_{i : e_{k,i} \in A} \exp\left({\tilde{\go}_{k,i}-x_1-x_{2}}\right)\cdot \frac{Z^{A}_1}{Z_1}\cdot \frac{Z^{A}_{2}}{Z_{2}}\right).
\end{align*}
Showing that this error term is small at the level of the free energy is not difficult, the interlacing~\eqref{def:interlace} goes one step further in showing that the derivatives of the error up top order $3$ are small as well. The next lemma will make this precise
\begin{lem}\label{lem:cutbound}
	Let $Z_1$ and $Z_{2}$ be as above. Let $\mv{i}=(i_1,i_{2})$ denote a non-zero multi-index and $\partial_{\mv{x}}^{\mv{i}}:=\partial_{x_1}^{i_1}\partial_{x_{2}}^{i_{2}}$. There exists $C=C(x,\mv{i})<\infty$ such that
	\begin{align*}
	 |\partial_{\mv{x}}^{\mv{i}}R_{n,k}|\le C.\end{align*}
\end{lem}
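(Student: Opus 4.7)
The idea is to set $F(x_1,x_2):=e^{R_{n,k}}=\sum_{A\subseteq \cE_k}\alpha_A(x_1,x_2)$, where
\[
\alpha_A(x_1,x_2):=\prod_{i:e_{k,i}\in A}e^{\tilde{\go}_{k,i}-x_1-x_2}\cdot \frac{Z_1^A(x_1)}{Z_1(x_1)}\cdot \frac{Z_2^A(x_2)}{Z_2(x_2)},
\]
and then bound the mixed partials of $\log F$ via a Fa\`a di Bruno--type expansion combined with the ratio bound of Lemma~\ref{lem:ratioderbound}. Two structural facts drive the argument: $F$ is a sum of at most $2^h$ positive terms (so $|\cE_k|=h$ is a fixed, $n$-independent cardinality); and each $\alpha_A$ factors neatly as a function of $x_1$ times a function of $x_2$ (times the constant $e^{\sum \tilde{\go}_{k,i}}$), which lets us separately differentiate in $x_1$ and in $x_2$.

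First, I would establish the pointwise derivative bound on each summand: for every multi-index $\mv{j}=(j_1,j_2)$ there is a constant $C_{\mv{j}}$, independent of $n$ and $k$, such that $|\partial_{\mv{x}}^{\mv{j}}\alpha_A|\le C_{\mv{j}}\,\alpha_A$ for every $A\subseteq\cE_k$. Applying Leibniz, $\partial_{x_1}^{j_1}\alpha_A$ is a finite sum of products of derivatives of $e^{-|A|x_1}$ (which yield factors $(-|A|)^{k_1}e^{-|A|x_1}$ bounded by $h^{k_1}\,e^{-|A|x_1}$) with derivatives $\partial_{x_1}^{j_1-k_1}(Z_1^A/Z_1)$. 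Lemma~\ref{lem:ratioderbound}, applied to $\cG_{[1:k]}$ with $V'$ taken to be the $\le 2h$ endpoints of edges in $A$, bounds the latter by $K_{j_1-k_1,2|A|}\cdot(Z_1^A/Z_1)$. The analogous inequality holds in $x_2$, and combining by Leibniz for the mixed derivative produces the claimed pointwise bound. Summing over $A$ gives $|\partial_{\mv{x}}^{\mv{j}}F|\le C'_{\mv{j}}\,F$.

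Finally, I would apply Lemma~\ref{lem:MomentCumulant} iteratively (once in each variable) to $\log F$. The resulting expression for $\partial_{\mv{x}}^{\mv{i}}\log F$ is a finite polynomial, with coefficients depending only on $\mv{i}$, in the logarithmic ratios $\{\partial_{\mv{x}}^{\mv{j}}F/F:\mv{0}<\mv{j}\le \mv{i}\}$, each of which is uniformly bounded by the previous step. This yields $|\partial_{\mv{x}}^{\mv{i}}R_{n,k}|\le C(x,\mv{i})$ as required. The main issue to watch is that the constants produced by Lemma~\ref{lem:ratioderbound} must not drift with $n$; this is fine precisely because $|V'|\le 2h$ depends only on the fixed cross-section graph $H$, and because $x_1,x_2$ range over a bounded set when we later specialize to $(0,0)$ and its neighborhood.
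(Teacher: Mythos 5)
Your proposal is correct and follows essentially the same route as the paper: write $e^{R_{n,k}}$ as a sum over $A\subseteq\cE_k$ of positive separable terms, bound each term's derivatives relative to the term itself via Leibniz and Lemma~\ref{lem:ratioderbound} (the paper's $\Psi_{A,i,l}$ computation), deduce $|\partial_{\mv{x}}^{\mv{j}}e^{R_{n,k}}|\le C\,e^{R_{n,k}}$, and convert to a bound on the derivatives of the logarithm via Lemma~\ref{lem:MomentCumulant}. The only differences are cosmetic (term-by-term versus ratio-of-sums bookkeeping), and your observation that the constants depend only on $h$ and not on $n$ matches the paper's use of $K_{i,A}$ with $|A|\le h$.
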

As an immediate consequence of Lemma~\ref{lem:ratioderbound}, there exist constants $K_{i,A}$ such that for $l=1,2$, we have
\begin{align}
	\abs{ \partial^i_{x_{l}}(Z_{l}^{A}/Z_{l}) }\le K_{i,A}\cdot (Z_{l}^{A}/Z_{l}).
\end{align}
With the family of constants $K_{i,A}$ introduced in Lemma~\ref{lem:ratioderbound}, we define
\begin{align*}
	K_i:=\max_{A\subseteq \cE_{k}}\sum_{j=0}^iK_{i-j,A}.
\end{align*}
We now have all the ingredients required to prove Lemma~\ref{lem:cutbound}.

\begin{proof}[Proof of Lemma~\ref{lem:cutbound}]
	We aim to bound the derivatives of the error term
	\begin{align*}R_{n,k}=\log \left( 1+\sum_{A \neq \phi}\prod_{e \in A} \exp (\tilde{\go}_e-x_1-x_{2})\frac{Z^{A}_1Z^{A}_{2}}{Z_1Z_{2}}\right).\end{align*}
	We will be applying Lemma~\ref{lem:MomentCumulant} to
	\begin{align}
		r_{n,k}:=\exp(R_{n,k}).
	\end{align}
	Thus, it suffices to bound the ``moment'' terms
	\begin{align}\label{def:momentformula} m_{\mv{i}}
		:=r_{n,k}^{-1} \cdot \partial^{\mvi}_x r_{n,k}
		=r_{n,k}^{-1} \cdot \partial_x^{\mv{i}}\left(\sum_{A \neq \phi} \exp (\tilde{\go}_{A}-|A|x_1-|A|x_{2})\frac{Z^{A}_1Z^{A}_{2}}{Z_1Z_{2}}\right)
	\end{align}
	where we used $\tilde{\go}_{A}:=\sum_{e\in A}\tilde{\go}_e.$	We define
	\begin{align*}
		\Psi_{A,i,l}(x)
		:=\partial_x^i \left(e^{-|A|x}\cdot Z_{l}^{A}/Z_{l}\right)=\sum_{j=0}^i\begin{pmatrix} i \\ j \end{pmatrix}(-|A|)^{j}\cdot e^{-|A|x}\cdot \partial^{i-j}_x\frac{Z^{A}_{l}(x)}{Z_{l}(x)}.
	\end{align*}
	On applying Lemma~\ref{lem:ratioderbound}, we get
	\begin{align*}|\Psi_{A,i,l}|\le (2|A|)^i\frac{Z_{l}^{A}}{Z_{l}}\exp(-|A|x)\sum_{j=0}^iK_{i-j,A}\le (2h)^i\frac{Z_{l}^{A}}{Z_{l}}\exp(-|A|x)\sum_{k=0}^iK_{i-j,A}.\end{align*}
	We plug the bound in~\eqref{def:momentformula}, to get
	\begin{align*}
	 |m_{\mv{i}}| 
	 &\le (2h)^{i_1+i_{2}}K_{i_1}K_{i_{2}}\frac{\sum_{A\neq \phi} \exp(\tilde{\go}_{A}-|A|x_1-|A|x_{2})\frac{Z_1^{A}Z_{2}^{A}}{Z_1Z_{2}}}{1+\sum_{A \neq \phi} \exp (\tilde{\go}_{A}-|A|x_1-|A|x_{2})\frac{Z^{A}_1Z^{A}_{2}}{Z_1Z_{2}}}
 \le {(2h)}^{|i|}K_{i_1}K_{i_{2}}.
 \end{align*}
 This completes the proof.
\end{proof}
In Section~\ref{sec:Ulims} we will provide a sufficiency condition for the annealed variance of $\la U \ra$ to be bounded away from $0$. The localization of Lee-Yang zeroes to a compact interval when the random weights are bounded is key to this proof. It is convenient to prove a preliminary lemma for this now.

\begin{lem}\label{lem:derlowerbound}
	Let $\cG$ be a graph on $n$ vertices with bounded degree; the edges have bounded weights and the vertices have constant weight $x$. Let $u,v\in V(\cG)$, and $\cG^{u,v}$ be the graph obtained on removal of $u$ and $v$. Let $Z_1$ and $Z_{2}$ denote the corresponding partition functions and $M>0$ be such that the Lee-Yang zeroes are restricted to $[-M,M]$. We have
	\begin{align*}|\partial_x\log Z_1-\partial_x\log Z_{2}|\ge \left(\frac{e^{2x}-M^2}{e^{2x}+M^2}\right)_{+}.\end{align*}
\end{lem}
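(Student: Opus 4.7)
The plan is to translate the lemma into a comparison of sums over Lee--Yang zeros, apply Cauchy interlacing twice (once per removed vertex), and exploit the conjugate-pair symmetry of the zeros. Set $t:=e^{2x}$ and $\phi(\lambda):=\lambda^{2}/(t+\lambda^{2})$, which is even, non-decreasing in $|\lambda|$, satisfies $\phi(0)=0$, and is bounded by $\phi(M)=M^{2}/(t+M^{2})$ on $[-M,M]$. Let $\{\lambda_{k}\}_{k=1}^{n}$ and $\{\mu_{j}\}_{j=1}^{n-2}$ denote the Lee--Yang zeros of $Z_{1}$ and $Z_{2}$, each listed in non-decreasing order. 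Because $t/(t+\lambda^{2})=1-\phi(\lambda)$, the identity $\partial_{x}\log Z=(\text{number of vertices})-\sum_{k}\phi(\lambda_{k})$ gives
$\partial_{x}\log Z_{1}-\partial_{x}\log Z_{2} = 2+\sum_{j=1}^{n-2}\phi(\mu_{j})-\sum_{k=1}^{n}\phi(\lambda_{k})$.

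The key input is the two-step interlacing $\lambda_{k}\le \mu_{k}\le \lambda_{k+2}$ for $k=1,\ldots,n-2$, obtained by applying the one-step interlacing~(\ref{def:interlace}) to each of the two vertex removals in turn. Using the conjugate-pair symmetry, write the non-negative zeros of $Z_{1},Z_{2}$ as $0\le \alpha_{1}\le \cdots\le \alpha_{p}$ and $0\le \beta_{1}\le \cdots\le \beta_{q}$ with multiplicity; since $\phi$ is even and $\phi(0)=0$, we have $\sum_{k}\phi(\lambda_{k})=2\sum_{i}\phi(\alpha_{i})$ and $\sum_{j}\phi(\mu_{j})=2\sum_{i}\phi(\beta_{i})$, with any multiplicity at the origin dropping out. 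A parity-dependent identification of the full sequence with its non-negative half translates $\lambda_{k}\le \mu_{k}\le \lambda_{k+2}$ to $\alpha_{i}\le \beta_{i+1}$ and $\beta_{i}\le \alpha_{i+1}$ for the appropriate ranges of $i$. Pair $\alpha_{i}$ with $\beta_{i+1}$: since $\phi$ is non-decreasing in $|\cdot|$, each such pair contributes $\phi(\alpha_{i})-\phi(\beta_{i+1})\le 0$, while the at most two leftover $\alpha$-terms at the top are each bounded by $\phi(M)$. Hence $\sum_{i}\phi(\alpha_{i})-\sum_{i}\phi(\beta_{i})\le 2\phi(M)$, and therefore
$\partial_{x}\log Z_{1}-\partial_{x}\log Z_{2}\ge 2-4\phi(M)=\frac{2(t-M^{2})}{t+M^{2}}\ge \frac{(t-M^{2})_{+}}{t+M^{2}}$. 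For $t\le M^{2}$ the right-hand side is $0$ and the claim is trivial; for $t>M^{2}$ the left-hand side is strictly positive, so inserting absolute values is harmless.

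The main obstacle is the index chase of the previous paragraph: carefully verifying that the interlacing of the full sorted sequence of zeros translates, under the parity-dependent identification with the non-negative half, into the paired statements $\alpha_{i}\le \beta_{i+1}$ and $\beta_{i}\le \alpha_{i+1}$, keeping track of which indices correspond to negative versus non-negative zeros. Graphs lacking a perfect matching produce a root at $\lambda=0$ of positive multiplicity in $Z_{1}$ or $Z_{2}$, but these are inert because $\phi(0)=0$, so they drop out of both sides of the estimate without requiring any modification of the argument.
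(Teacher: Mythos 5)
Your proposal is correct and follows essentially the same route as the paper: write $\partial_x\log Z$ as a sum of $e^{2x}/(e^{2x}+\lambda^2)$ over Lee--Yang zeroes, reduce to the non-negative zeroes by conjugate symmetry, use the two-step interlacing to pair and telescope all but the two extremal zeroes of $Z_1$, and bound those via the localization $|\lambda|\le M$. Your bookkeeping through $\phi=1-f^1$ even yields the sharper constant $2(e^{2x}-M^2)/(e^{2x}+M^2)$, which the paper's argument also produces implicitly but discards when it restricts to positive roots.
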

\begin{proof}
	Recall the definition of $f^1(x,\gl)$ from~\eqref{def:fidef}. Let $\{\gl_{k}\}$ and $\{\gl_{k}'\}$ be the Lee-Yang zeroes of $Z_1$ and $Z_{2}$. We have
	\begin{align}\label{eq:l451}
		|\partial_x\log Z_1-\partial_x \log Z_{2}|=\abs{ \sum_{k=1}^nf^1(x,\gl_{k})-\sum_{k=1}^{n-2}f^1(x,\gl_{k}') }.
	\end{align}
	It suffices to consider the case of $n$ being even, as if $n$ is odd, $Z_1$ and $Z_{2}$ have a common root located at $0$ which cancels on taking the difference. We may also sum exclusively over $\gl> 0$ since the roots are symmetric. We re-index the roots from $1$ to $l$ where $l=\frac{n}{2}$. By interlacing, we have that $\gl_{k}\le \gl_{k+2}'$ for all $k$. Rearranging, we get
	\begin{align*}
		\eqref{eq:l451} =
		\abs{ \frac{e^{2x}}{e^{2x}+(\gl_{l})^2} +\frac{e^{2x}}{e^{2x}+(\gl_{l-1})^2}-\frac{e^{2x}}{e^{2x}+(\gl_1')^2} + \sum_{k=1}^{l-2}\left( f^1(x,\gl_{k})-f^1(x,\gl_{k+2}')\right) }.
	\end{align*}
	Moreover, interlacing yields that
	\begin{align*}
	 \sum_{k=1}^{l-2}\left( f^1(x,\gl_{k})-f^1(x,\gl_{k+2}')\right)
		\ge 0.
	\end{align*}
	Combining the bounds
	\begin{align*}
	 \frac{e^{2x}}{e^{2x}+(\gl_{l})^2} +\frac{e^{2x}}{e^{2x}+(\gl_{l-1})^2}\ge \frac{2e^{2x}}{e^{2x}+M^2}
	 \text{ and }\frac{e^{2x}}{e^{2x}+(\gl_1')^2} \le 1
	\end{align*}
	 and applying triangle inequality completes the proof.
\end{proof}
\section{Limit Theorems for the Number of Unpaired Vertices} \label{sec:Ulims}
In Section~\ref{sec:LeeYang} we introduced exponential tilting in order to study the cumulants, as well as introduce the Lee-Yang zeroes. In this section, we will be relating the results of Section~\ref{sec:LeeYang} to the model at hand. As such, at the conclusion of calculations here, all parameters depending on $x$ will be evaluated at $x=0$. In the prior section we used the gauge invariance to make sure that the vertex weights were constant for the purpose of defining Lee-Yang zeroes. Here, we will freely move back and forth between the original partition function and the gauge transformed version. We may do this because the respective free energies differ by a constant, which vanishes on taking the derivative with respect to $x$.

\subsection{Law of Large Numbers and Variance Convergence}

The entirety of this section is dedicated to the proof of Theorem~\ref{thm:Mvar}, which is given at the end. We begin with proving the law of large numbers for the number of unpaired vertices for a typical matching $\fm$. The method of proof is entirely analogous to the convergence of the mean free energy. We will establish the convergence of the annealed average using subadditivity, a variance bound using Efron-Stein, and finally convergence of $\la U \ra_n$ using Chebyshev's inequality.

\begin{lem}\label{lem:Umean}
	For every $x\in\dR$, there exists $u(x)\in [0,1]$ such that
	\begin{align*}
		n^{-1}\E\la U \ra^x _n \to u(x) \end{align*}
\end{lem}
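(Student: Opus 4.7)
The plan is to adapt the subadditive strategy of Lemma~\ref{lem:freeenergy_mean} to the first derivative $\E\la U\ra_n^x$ itself, rather than to $\E\log Z_n(x)$. Since $Z_n(x)$ is a polynomial in $e^x$ with strictly positive coefficients, $\log Z_n(x)$ is smooth in $x$, and its derivative $\partial_x \log Z_n(x) = \la U\ra_n^x \in [0, nh]$ is uniformly bounded in the environment. Dominated convergence therefore permits the interchange $\E\la U\ra_n^x = \partial_x \E\log Z_n(x)$.

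I would then apply the decomposition~\eqref{def:Decomp} to the tilted partition function, which is obtained from the untilted one by replacing each vertex weight $\nu_u$ by $\nu_u+x$:
\begin{align*}
\log Z_{n+m}(x) = \log Z_{[1:n]}(x) + \log Z_{[n+1:n+m]}(x) + R_{n+m,n}(x).
\end{align*}
Differentiating in $x$ and taking expectations gives the near-additive identity
\begin{align*}
\E\la U\ra_{n+m}^x = \E\la U\ra_n^x + \E\la U\ra_m^x + \E\,\partial_x R_{n+m,n}(x),
\end{align*}
where the two restricted expectations reduce to $\E\la U\ra_n^x$ and $\E\la U\ra_m^x$ because $\cG_{[1:n]}$ and $\cG_{[n+1:n+m]}$ are isomorphic to $\cG_n$ and $\cG_m$, with i.i.d.\ environment.

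The key step is to control the error $\partial_x R_{n+m,n}(x)$ almost surely and uniformly in $n, m$. This is precisely what Lemma~\ref{lem:cutbound} delivers: applied along the diagonal $x_1=x_2=x$ with the multi-indices $\mv{i}=(1,0)$ and $(0,1)$, it yields $|\partial_x R_{n+m,n}(x)| \le 2C(x,h)$ almost surely. Crucially, the constant $C(x,h)$ is \emph{deterministic} in the environment: its derivation in the proof of Lemma~\ref{lem:cutbound} relies on the bounds of Lemmas~\ref{lem:vertexremovalbound} and~\ref{lem:ratioderbound}, which in turn follow from the pointwise Lee--Yang interlacing inequalities~\eqref{def:interlace} and are independent of the weight realizations. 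Consequently $|\E\,\partial_x R_{n+m,n}(x)| \le 2C(x,h)$.

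With this bound the sequence $a_n := \E\la U\ra_n^x$ satisfies $a_{n+m} \le a_n + a_m + 2C$, and Hammersley's Theorem~\ref{thm:subadditive} gives $n^{-1}a_n \to u(x)$ for some finite limit. The trivial inequality $0 \le \la U\ra_n^x \le nh$ then forces $u(x)\in[0,h]$, consistent with the claimed range $[0,1]$ after the natural per-vertex normalization. The main subtlety I expect is asserting that Lemma~\ref{lem:cutbound}'s constant is truly deterministic in the environment; once that is in hand the subadditive scheme concludes in exactly the same way as in Lemma~\ref{lem:freeenergy_mean}. An alternative route, through pointwise convergence of convex functions $n^{-1}\E\log Z_n(x)\to f(x)$, would give $u(x)=f'(x)$ only at points where $f$ is differentiable, so it is preferable to use the subadditive argument which yields convergence for every $x$.
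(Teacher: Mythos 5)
Your proposal is correct and follows essentially the same route as the paper: differentiate the decomposition~\eqref{def:Decomp} in the tilt parameter, bound $\partial_x R_{n,k}$ by the deterministic constant from Lemma~\ref{lem:cutbound}, and conclude by Hammersley's subadditive theorem. Your additional remarks (justifying the derivative--expectation interchange, noting that the bound of Lemma~\ref{lem:cutbound} is uniform in the environment, and flagging that the $n^{-1}$ normalization naturally gives a limit in $[0,h]$ rather than $[0,1]$) are all sound refinements of the same argument.
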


\begin{proof}
	We begin with the decomposition in~\eqref{def:Decomp} and take the derivative with respect to $x$
	\begin{align*}
		\partial_x \log Z_n =\partial_x \log Z_{[1:k]} +\partial_x \log Z_{[k+1:n]} + \partial_xR_{n,k}.
	\end{align*}
	Lemma~\ref{lem:cutbound} then yields
	\begin{align*}
		\E\la U\ra^x _n\le \E\la U\ra^x _{k} +\E\la U \ra^x _{n-k} + C(1).
	\end{align*}
	The subadditive lemma then yields the convergence, and we have
	\begin{align*}
		u(x):=\lim_{n \to \infty}n^{-1}\cdot \E \la U \ra^x_n
	\end{align*}
	exists for all $x$.
\end{proof}
Having established the correct centering, we move to the fluctuations. We establish the following upper bound on the variance.
\begin{lem}\label{lem:efron2} There exists a finite constant $C$ depending on $x$ such that
	\begin{align*}
		\var \la U\ra^x_n \le C n.
	\end{align*}
\end{lem}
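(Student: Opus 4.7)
The plan follows the template of Lemma~\ref{lem:efron}: apply the Efron--Stein inequality coordinate by coordinate over the independent weights, and show each single-coordinate difference is bounded by a constant depending only on $x$. Since $\la U\ra_n^x = \partial_x \log Z_n(x)$, writing $Z_n^e$ (resp.\ $Z_n^v$) for the partition function obtained by resampling the weight of $e$ (resp.\ $v$),
\[
\var \la U\ra_n^x \le \tfrac{1}{2}\sum_{e\in E}\E\bigl(\la U\ra_n^x-\la U\ra_n^{x,e}\bigr)^2 + \tfrac{1}{2}\sum_{v \in V}\E\bigl(\la U\ra_n^x-\la U\ra_n^{x,v}\bigr)^2,
\]
so with $|V|+|E|=O(n)$ it suffices to prove a uniform $O(1)$ bound on each summand.

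For an edge $e=(u,v)$, the decomposition $Z_n(x)=\alpha_e(x)+e^{\omega_e}\beta_e(x)$ from~\eqref{def:recur}, with $\alpha_e=Z_{\cG_n^e}(x)$ and $\beta_e=Z_{\cG_n^{u,v}}(x)$ both independent of $\omega_e$, expresses $\partial_x\log Z_n(x)$ as a convex combination of $\partial_x\log\alpha_e$ and $\partial_x\log\beta_e$ with only the mixing weights depending on $\omega_e$. Therefore
\[
\bigl|\la U\ra_n^x-\la U\ra_n^{x,e}\bigr|\le\bigl|\partial_x\log\alpha_e-\partial_x\log\beta_e\bigr|,
\]
which is bounded by $2C(x)$ by Lemma~\ref{lem:vertexremovalbound} applied with $V'=\{u,v\}$ to the graph $\cG_n^e$ (noting that removing $u,v$ from $\cG_n^e$ yields $\cG_n^{u,v}$).

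For a vertex $u$, the companion decomposition $Z_n(x)=\hat\alpha_u(x)+e^{\nu_u+x}\hat\beta_u(x)$ with $\hat\beta_u=Z_{\cG_n^u}(x)$ and $\hat\alpha_u=\sum_{w\sim u}e^{\omega_{uw}}Z_{\cG_n^{u,w}}(x)$ (both independent of $\nu_u$) gives, by the same convex-combination argument,
\[
\bigl|\la U\ra_n^x-\la U\ra_n^{x,v}\bigr|\le\bigl|1+\partial_x\log\hat\beta_u-\partial_x\log\hat\alpha_u\bigr|.
\]
Lemma~\ref{lem:vertexremovalbound} directly bounds the distance from $\partial_x\log\hat\beta_u$ to $\partial_x\log Z_n$ by $C(x)$. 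The one mildly delicate point is that $\hat\alpha_u$ is not itself a partition function of a subgraph; however, since
\[
\partial_x\log\hat\alpha_u=\sum_{w\sim u}\frac{e^{\omega_{uw}}Z_{\cG_n^{u,w}}(x)}{\hat\alpha_u(x)}\cdot\partial_x\log Z_{\cG_n^{u,w}}(x)
\]
is a convex combination of the $\partial_x\log Z_{\cG_n^{u,w}}$, each of which is within $2C(x)$ of $\partial_x\log Z_n$ by Lemma~\ref{lem:vertexremovalbound}, the triangle inequality supplies the required uniform bound.

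Summing over the $|E|+|V|=O(n)$ terms in the Efron--Stein bound then yields $\var\la U\ra_n^x\le Cn$. The key observation making the whole argument work is that the constant produced by Lemma~\ref{lem:vertexremovalbound} depends only on $|V'|\le 2$ and on $x$, not on $n$; the one piece requiring care is handling $\log\hat\alpha_u$, and the convex-combination identity above is the conceptual step that circumvents the absence of a subgraph interpretation for $\hat\alpha_u$.
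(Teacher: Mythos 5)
Your proof is correct, and while it shares the overall Efron--Stein skeleton with the paper's argument, the key estimate is obtained by a genuinely different route. The paper controls each single-coordinate increment by differentiating the integral representation $\log Z_n-\log Z_n^{e}=\int_{\go_e'}^{\go_e} e^z(e^z+\ga_e/\gb_e)^{-1}\,dz$ in $x$, invoking Lemma~\ref{lem:ratioderbound} to bound $\partial_x(\ga_e/\gb_e)$ by $K_1\,\ga_e/\gb_e$, and then using $2ab\le (a+b)^2$ to arrive at a bound of the form $C\,|\go_e-\go_e'|$ (and likewise $C\,|\nu_v-\nu_v'|$), so that the final step uses the finite second moment of the weights. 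You instead observe that $\partial_x\log Z_n$ is a convex combination of $\partial_x\log\ga_e$ and $\partial_x\log\gb_e$ (resp.\ of $1+\partial_x\log\widehat\gb_u$ and $\partial_x\log\widehat\ga_u$) in which only the mixing weight depends on the resampled coordinate, and then bound the spread of the two endpoints by a \emph{deterministic} constant via Lemma~\ref{lem:vertexremovalbound}; your further convex decomposition of $\partial_x\log\widehat\ga_u$ over the neighbors $w\sim u$ correctly handles the fact that $\widehat\ga_u$ is not itself a subgraph partition function. Both routes ultimately rest on the interlacing machinery of Section~\ref{sec:LeeYang}, but yours yields a uniform $O(1)$ bound per coordinate that does not consume the moment hypothesis on the weights at this step, which is arguably cleaner; the paper's version is closer in form to Lemma~\ref{lem:efron} and generalizes more directly to settings where only an increment bound in terms of $|\go_e-\go_e'|$ is available.
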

\begin{proof}
	We use the same method as Lemma~\ref{lem:efron}, the only difference is that since we are taking a derivative with respect to $x$, the dependence of the $\ga$ and $\gb$ terms on $x$ needs to be taken into account. Let $e=(v,w)$ be an arbitrary edge and $u$ be an arbitrary vertex. With the same notation as Lemma~\ref{lem:efron},
	\begin{align*}
		\partial_x \log Z_n-\partial_x \log Z_n^{e}
		 & =\partial_x\int_{\go_e'}^{ \go_e}\frac{e^z}{e^z+{\ga_e}/{\gb_e}}\, dz
		 =-\partial_x\left(\frac{\ga_e}{\gb_e}\right)\cdot\int_{\go_e'}^{ \go_e} e^z\left(e^z+{\ga_e}/{\gb_e}\right)^{-2}\, dz.
	\end{align*}
	Analogously for the vertices, we have
	\begin{align*}
		\partial_x\log Z_n-\partial_x\log Z_n^{u}
		=-\partial_x\left( \frac{\hat{\alpha}_u}{\hat{\beta}_u}\right)\cdot \int_{\nu'_u}^{\nu_u} e^z\bigl(e^z+ {\hat{\alpha}_u}/{\hat{\beta}_u}\bigr)^{-2}\, dz.
	\end{align*}
	We illustrate the method for the variance that is contributed by the edge randomness. The vertex case is identical. By applying Lemma~\ref{lem:ratioderbound} and the triangle inequality we have
	\begin{align*}
		|\partial_x \log Z_n-\partial_x \log Z_n^{e}|\le \int_{\go_e\land \go_e'}^{\go_e\lor \go_e'}\frac{K_1e^z\cdot {\ga}/{\gb}}{\left(e^z+{\ga}/{\gb}\right)^2}\, dz.
	\end{align*}
	We now use the fact that when $a,b\ge 0$ then $2ab\le (a+b)^2$. Finally squaring, taking the expectation and summing over all edges complete the proof.
\end{proof}
The following is an easy corollary using Chebyshev's inequality.
\begin{cor}\label{cor:Uconv}
	For any $x\in \dR$, with $u(x)$ as defined in Lemma~\ref{lem:Umean}, we have
	\begin{align*}
		n^{-1}\la U\ra^x_n\toP u(x)\text{ as } n\to\infty.
	\end{align*}
\end{cor}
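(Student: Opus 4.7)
The plan is to combine the convergence of the annealed mean from Lemma~\ref{lem:Umean} with the linear variance bound from Lemma~\ref{lem:efron2} via Chebyshev's inequality, exactly as was done for the free energy in Corollary~\ref{cor:Zmean}. Since $\la U\ra_n^x$ is a deterministic function of the random environment (edge and vertex weights), the only randomness to control is from the weights, and this is precisely what the Efron--Stein estimate gives.

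First I would invoke Lemma~\ref{lem:Umean} to obtain
\[
n^{-1}\E \la U\ra_n^x \To u(x) \quad\text{as } n\to\infty.
\]
Next, Lemma~\ref{lem:efron2} yields a constant $C=C(x)<\infty$ such that $\var \la U\ra_n^x \le Cn$, so
\[
\var\bigl(n^{-1}\la U\ra_n^x\bigr)\le \frac{C}{n}\To 0.
\]
For any fixed $\eps>0$, Chebyshev's inequality gives
\[
\pr\Bigl(\,\bigl| n^{-1}\la U\ra_n^x - n^{-1}\E\la U\ra_n^x\bigr|>\eps\,\Bigr)\le \frac{\var(n^{-1}\la U\ra_n^x)}{\eps^{2}}\le \frac{C}{\eps^{2}\,n}\To 0,
\]
so $n^{-1}\la U\ra_n^x - n^{-1}\E\la U\ra_n^x \toP 0$.

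Finally I would combine this with the deterministic convergence of $n^{-1}\E\la U\ra_n^x$ to $u(x)$ by a standard triangle-inequality/Slutsky argument: for $n$ large enough, $|n^{-1}\E\la U\ra_n^x - u(x)|\le \eps/2$, and the probabilistic bound above forces the random deviation to be at most $\eps/2$ with probability tending to $1$, establishing $n^{-1}\la U\ra_n^x \toP u(x)$. There is no substantive obstacle here, as both ingredients are already in hand; the only care needed is to note that $C$ may depend on $x$ but this is harmless because the statement is pointwise in $x$.
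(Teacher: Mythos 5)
Your proposal is correct and matches the paper's argument exactly: the paper derives this corollary from Lemma~\ref{lem:Umean} (convergence of $n^{-1}\E\la U\ra_n^x$) and Lemma~\ref{lem:efron2} (the Efron--Stein variance bound $\var\la U\ra_n^x\le Cn$) via Chebyshev's inequality, just as in Corollary~\ref{cor:Zmean} for the free energy. Nothing is missing.
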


The convergence of $n^{-1}\la U\ra^x$ is enough to establish the convergence of all higher scaled cumulants, as it implies the weak convergence (in probability) of the sequence $\{\rho_n\}$.
\begin{definition}
	A probability measure $\mu$ on $\dR$ is said to be symmetric if $\mu_n[a,b]=\mu_n [-b,-a]$ for all $a\le b$ in $\dR$.
\end{definition}
Clearly, the family of empirical measures $\{\rho_n\}$ is symmetric. We recall that $n^{-1}\la U\ra_n$ is a linear statistic in the the zeroes. If we let $z$ denote $e^{2x}$ to have
\begin{align*}
	n^{-1}\la U\ra_n^x=\int\frac{z}{z+\gl^{2}}\, d\rho_n(\gl).
\end{align*}
The transform with respect to the function $z/(z+\gl^{2})$ determines the measure. We prove this below.
\begin{lem}\label{lem:doublelaplace}
	Let $\{\mu_n\}_{n\ge 1}$ be a tight sequence of symmetric probability measures on the real line. Consider the following transform of $\mu_n$:
	\begin{align*}
		F_n(z)=\int_{\dR}\frac{z}{z+\gl^{2}}\, d\mu_n(\gl), \text{ } z\in (0,\infty).
	\end{align*}
	If $F_n(z)$ converges pointwise to $F(z)$ as $n\to\infty$, then $\mu_n$ is weakly convergent to a probability measure $\mu$.
\end{lem}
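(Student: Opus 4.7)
The plan is to use Prokhorov's theorem together with a subsequential identification. By tightness, every subsequence of $\{\mu_n\}$ admits a further subsequence $\mu_{n_k}$ converging weakly to some probability measure $\mu^\star$ on $\dR$. It will suffice to show that any such weak subsequential limit is uniquely determined by $F$; standard arguments then force all subsequential limits to coincide and hence $\mu_n \Rightarrow \mu$.

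To identify a weak limit from $F$, observe first that $\gl \mapsto z/(z+\gl^{2})$ lies in $C_b(\dR)$ for each $z>0$, so weak convergence of $\mu_{n_k}$ to $\mu^\star$ gives $F_{n_k}(z) \to \int z/(z+\gl^{2})\, d\mu^\star(\gl)$; combined with the hypothesis this equals $F(z)$ for every $z>0$. Symmetry is preserved under weak limits, so $\mu^\star$ is symmetric and is determined by its pushforward $\tilde\mu^\star$ on $[0,\infty)$ under $\gl \mapsto \gl^{2}$, in terms of which
\[
F(z) = \int_0^\infty \frac{z}{z+t}\, d\tilde\mu^\star(t).
\]
Using the elementary identity $z/(z+t) = z\int_0^\infty e^{-s(z+t)}\, ds$ valid for $z>0$ and $t\ge 0$, and Fubini (integrand nonnegative), we rewrite
\[
\frac{F(z)}{z} = \int_0^\infty e^{-sz}\, \hat{\tilde\mu}^\star(s)\, ds, \qquad \hat{\tilde\mu}^\star(s) := \int_0^\infty e^{-st}\, d\tilde\mu^\star(t).
\]
Thus $F(z)/z$ is the Laplace transform of the bounded continuous function $\hat{\tilde\mu}^\star$ on $[0,\infty)$, evaluated at $z$. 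Injectivity of the Laplace transform on bounded continuous functions recovers $\hat{\tilde\mu}^\star$ from $F$; injectivity of the Laplace transform on finite Borel measures on $[0,\infty)$ then recovers $\tilde\mu^\star$; and symmetry recovers $\mu^\star$ from $\tilde\mu^\star$.

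The main obstacle here is the uniqueness step, but once symmetry is used to push the problem onto $[0,\infty)$ it reduces to standard injectivity of the Laplace transform applied twice; no moment or growth condition is required because $\hat{\tilde\mu}^\star$ is bounded by $1$. Existence of the common limit $\mu$ then follows from the subsequential argument: since every weak subsequential limit equals the unique $\mu^\star$ determined by $F$, tightness plus unique identification of limits implies weak convergence of the entire sequence $\mu_n$.
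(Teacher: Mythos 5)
Your proposal is correct and follows essentially the same route as the paper: tightness plus subsequential identification, the representation $z/(z+\gl^{2})=z\int_0^\infty e^{-s(z+\gl^{2})}\,ds$ with Fubini to express $F(z)/z$ as an iterated Laplace transform, uniqueness of the Laplace transform applied twice, and symmetry to recover $\mu^\star$ from the law of $\gl^{2}$. The only cosmetic difference is that you pass to the weak limit first (using that $\gl\mapsto z/(z+\gl^{2})$ is bounded continuous) and do the Laplace manipulation on the limit measure, whereas the paper carries the transform along the sequence and invokes dominated convergence; the substance is identical.
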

\begin{proof}
	The symmetry is of crucial importance here. Let $L_n$ denote the random variable with distribution $\mu_n$. Symmetry of $\mu_n$ implies that the distribution of $L_n$ can be recovered from the distribution of $L_n^{2}$. The transform $F_n(z)$ is given by
	\begin{align*}
		F_n(z)= \E\frac{z}{z+L_n^{2}}=z\E\int_{0}^{\infty}\exp(-t(z+L_n^{2}))dt, { }z>0.
	\end{align*}
	Applying Fubini's theorem, we obtain that
	\begin{align*}
		F_n(z)=z\int_{0}^{\infty} \exp(-zt)\Gamma_n(t)dt,\text{ }z\ge 0
	\end{align*}
	where
	\begin{align*}
		\Gamma_n(t):=\E\exp(-tL_n^{2})=\int_{\dR}e^{-\gl^{2}t} d\mu_n(\gl),\text{ }t\ge0
	\end{align*}
	is the Laplace transform of the measure $\mu_n$. Since the sequence $\{\mu_n\}$ is tight, there exists a subsequence $\{\mu_{n_{k}}\}$ weakly converging to a probability measure $\mu$. If $\mu_n$ is not weakly convergent, there must be another subsequence $\{\mu_{m_{k}}\}$ with a distinct weak limit $\mu^{*}$. Since $\exp({-zt})$ is bounded and continuous, we have that $\Gamma_{n_{k}}(t)$ and $\Gamma_{m_{k}}(t)$ both converge pointwise as $k \to \infty$ to limits, say $\Gamma(t)$ and $\Gamma^{*}(t)$, respectively. By our original hypothesis, $F_{n_{k}}(z)$ and $F_{m_{k}}(z)$ both converge to $F(z)$ as $k \to \infty$. Applying dominated convergence (we may since $|\Gamma_n|\le 1$), we obtain that
	\begin{align*}
		\int_{0}^{\infty}\exp(-zt) (\Gamma(t)-\Gamma^{*}(t))dt=0,\text{ }\forall z>0.
	\end{align*}
	Uniqueness of the Laplace transform now implies that $\Gamma=\Gamma^{*}$, which in turn implies that $\mu=\mu^{*}$. Thus, $\mu_n$ is weakly convergent with limit $\mu$.
\end{proof}
\begin{cor}
	The sequence of empirical distributions $\rho_n$ weakly converges to a probability measure $\rho$ as $n\to\infty$, in probability.
\end{cor}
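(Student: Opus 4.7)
The plan is to lift the deterministic transform-uniqueness content of Lemma~\ref{lem:doublelaplace} to the random setting via a subsequence argument. Working with the gauge-transformed partition function, the identity
\begin{align*}
 n^{-1}\la U\ra^x _n = \int_{\dR}\frac{e^{2x}}{e^{2x}+\gl^{2}}\,d\rho_n(\gl)
\end{align*}
together with Corollary~\ref{cor:Uconv} immediately gives, for every $z>0$, the pointwise in-probability convergence
\begin{align*}
 F_n(z):=\int_{\dR} \frac{z}{z+\gl^{2}}\,d\rho_n(\gl) \toP F(z):=u\bigl(\tfrac{1}{2}\log z\bigr) \text{ as } n\to\infty.
\end{align*}

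To conclude weak convergence in probability of the random measures $\rho_n$ to a deterministic limit $\rho$, I would use the standard subsequence principle: it suffices to show that every subsequence admits a further subsequence along which $\rho_n$ converges weakly almost surely to one and the same deterministic probability measure. Fix a countable dense set $D\subset (0,\infty)$. Given any subsequence $\{n_k\}$, a diagonal extraction combined with Lemma~\ref{lem:tightness} and the in-probability convergence of $F_{n_k}(z)$ for each $z\in D$ yields a further subsequence $\{n_{k_j}\}$ along which, almost surely, (i) the family $\{\rho_{n_{k_j}}\}$ is tight and (ii) $F_{n_{k_j}}(z)\to F(z)$ for every $z\in D$.

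On the full-measure event where both (i) and (ii) hold, any weak subsequential limit $\rho^{*}$ of $\{\rho_{n_{k_j}}\}$ is a symmetric probability measure on $\dR$ satisfying $\int \frac{z}{z+\gl^{2}}\,d\rho^{*}(\gl)=F(z)$ for $z\in D$, and hence, by continuity and boundedness of $z\mapsto z/(z+\gl^{2})$ together with dominated convergence, for all $z\in(0,\infty)$. The Laplace-transform uniqueness underlying Lemma~\ref{lem:doublelaplace} then forces $\rho^{*}$ to be the unique symmetric probability measure whose transform equals $F$; call this common deterministic measure $\rho$. Thus $\rho_{n_{k_j}}$ converges weakly almost surely to $\rho$, and since this holds along an arbitrary initial subsequence, $\rho_n$ converges weakly to $\rho$ in probability.

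The main obstacle is already dispatched by Lemma~\ref{lem:doublelaplace}, which establishes that the family $\{z\mapsto z/(z+\gl^{2}):z>0\}$ is determining on symmetric probability measures. Once this is granted, the only remaining task is bookkeeping: combining tightness in probability (Lemma~\ref{lem:tightness}) with pointwise in-probability convergence of $F_n$ on a countable dense set to produce the almost sure subsequential statement above.
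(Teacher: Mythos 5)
Your proof is correct and follows essentially the same route as the paper, which presents this corollary as an immediate consequence of Lemma~\ref{lem:doublelaplace} applied to $F_n(z)=n^{-1}\la U\ra_n^{x}$ with $z=e^{2x}$, together with Corollary~\ref{cor:Uconv} and the tightness from Lemma~\ref{lem:tightness}. The paper leaves the subsequence/diagonal-extraction bookkeeping needed to pass from the deterministic statement of Lemma~\ref{lem:doublelaplace} to convergence in probability entirely implicit, and you have supplied it correctly.
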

We thus have a representation for the limiting density of unpaired vertices. With $f^1$ as defined in~\eqref{def:fidef} we have
\begin{align}\label{def:vertexdensity}
	u(x)=\int_{\dR} f^1(x,\gl) d\rho(\gl).
\end{align}
We have $u:=u(0)$. With the mean behavior characterized, we move to the asymptotic behavior of the variance.

\begin{lem}\label{lem:Uvar}
	We have $\sigma_{Q}(x)$ and $\sigma_{A}(x) \in [0,\infty)$ such that pointwise in $x$
	\begin{align*}
		n^{-1}\la\widehat{U}^{2} \ra^x_n\toP \sigma_{Q}^{2}(x)
		\text{ and }
		n^{-1}\var(\la U\ra^x_n) \to \gs^{2}_{A}(x).
	\end{align*}
\end{lem}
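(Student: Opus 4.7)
The plan is to treat the two convergences separately, using the Lee--Yang representation for the quenched part and the subadditivity machinery of Section~\ref{section:MFE} for the annealed part.

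For the quenched limit, I would start from the identity $\la \widehat{U}^{2}\ra_n^{x} = \la U^{2}\ra_n^x - (\la U\ra_n^x)^{2} = \partial_x^{2}\log Z_n(x)$, and then rewrite this via the gauge-transformed Lee--Yang representation (as in the formulas just before Lemma~\ref{lem:tightness}) as
\[
n^{-1}\la \widehat{U}^{2}\ra_n^{x} = \int_{\dR} f^{2}(x,\gl)\, d\rho_n(\gl),
\]
where $f^{2}(x,\gl) = 2\gl^{2}e^{-2x}(1+\gl^{2}e^{-2x})^{-2}$ is bounded, continuous, and vanishes at $\pm\infty$. Tightness of $\{\rho_n\}$ (Lemma~\ref{lem:tightness}) together with the weak convergence of $\rho_n$ to a deterministic probability measure $\rho$ in probability (the corollary following Lemma~\ref{lem:doublelaplace}) then yields convergence in probability of the integral, and I define $\gs_Q^{2}(x) := \int f^{2}(x,\gl)\, d\rho(\gl)\in[0,\infty)$.

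For the annealed limit, I would mimic the subadditive argument of Lemma~\ref{lem:zvarconv}. Differentiating the decomposition~\eqref{def:Decomp} in $x$ gives
\[
\partial_x \log Z_n = \partial_x\log Z_{[1:k]} + \partial_x\log Z_{[k+1:n]} + \partial_x R_{n,k},
\]
the first two summands on the right being independent. By Lemma~\ref{lem:cutbound} (applied with multi-indices $(1,0)$ and $(0,1)$ and summed), $|\partial_x R_{n,k}|$ is uniformly bounded by a constant $C(x)$, so $\norm{\ol{\partial_x R_{n,k}}}_{2}\le 2C(x)$. Combined with the variance bound $\norm{\ol{\partial_x \log Z_m}}_{2}\le C'\sqrt{m}$ from Lemma~\ref{lem:efron2} and the Cauchy--Schwarz inequality applied to the centered decomposition, this produces the subadditive relation
\[
\var(\partial_x\log Z_n) \le \var(\partial_x\log Z_k) + \var(\partial_x\log Z_{n-k}) + C''\sqrt{n}.
\]
Since the error $C''\sqrt{n}$ is summable after division by $n^{2}$, Hammersley's Theorem~\ref{thm:subadditive} delivers a limit $\gs_A^{2}(x)\in[0,\infty)$ for $n^{-1}\var\la U\ra_n^{x}$.

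The main obstacle I anticipate is the quenched part, and specifically the passage from pointwise-in-probability convergence of the transform $F_n(z)$ (which is available from Corollary~\ref{cor:Uconv} by taking $z=e^{2x}$) to weak convergence in probability of $\rho_n$ to a \emph{deterministic} limit $\rho$. Lemma~\ref{lem:doublelaplace} is deterministic, so one must first upgrade it to the random setting by applying it along an almost-surely convergent subsequence (or equivalently, by applying it after conditioning on realizations where the transform converges on a dense countable set of $z$'s). Once this upgrade is in place, bounded convergence finishes the quenched part immediately, and the annealed part is a routine application of subadditivity as above.
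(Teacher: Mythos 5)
Your proposal is correct and follows essentially the same route as the paper: the quenched limit is obtained by writing $n^{-1}\la\widehat{U}^{2}\ra_n^x$ as a bounded continuous linear statistic of the empirical measure $\rho_n$ and invoking its weak convergence in probability, and the annealed limit by differentiating the decomposition~\eqref{def:Decomp}, bounding $\partial_x R_{n,k}$ via Lemma~\ref{lem:cutbound}, and applying Cauchy--Schwarz together with the variance upper bound of Lemma~\ref{lem:efron2} and Hammersley's subadditive theorem. Your side remark about upgrading the deterministic Lemma~\ref{lem:doublelaplace} to the random setting (via almost-surely convergent subsequences) addresses a point the paper passes over silently, but it does not alter the structure of the argument.
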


\begin{proof}
	We establish the convergence of the quenched variance first. We have
	\begin{align*}
		n^{-1}\la {\widehat{U}}^2\ra^x _n
		= \int_{\dR} f^1(x,\gl)\left(1 -f^1(x,\gl)\right)\, d\rho_n(\gl).
	\end{align*}
	The integrand is bounded and continuous. Convergence of the variance is an immediate corollary of the weak convergence in probability of $\rho_n$, \ie\
	\begin{align*}
		n^{-1}\la {\widehat{U}}^2\ra^x _n
		\toP
		\gs_Q^2(x)
		:=\int_{\dR} f^1(x,\gl)\left(1 -f^1(x,\gl)\right)\, d\rho(\gl).
	\end{align*}
	The argument for the convergence of the annealed variance is similar to the proof of the convergence of variance of the free energy in Lemma~\ref{lem:zvarconv}. We write the decomposition
	\begin{align*}\partial_x \log Z_n=\partial_x\log Z_{[1:k]}+\partial_xZ_{[k+1:n]} + R_{n,k}^1\end{align*}
	and denote
	\begin{align*}
		\gs_{A,n}^2(x)=\var(\partial_x\log Z_n).
	\end{align*}
	Computing the variance of both sides and applying Cauchy-Schwarz inequality, we have
	\begin{align*}\gs_{A,n,}^2(x)\le \gs_{A,k,}^2(x)+\gs_{A,n-k}^2(x)+K_1(\gs_{A,k}(x)+\gs_{A,n-k}(x)).\end{align*}
	By Lemma~\ref{lem:efron2}, $\gs^{2}_{A,n}(x)\le C n$.
	As an immediate corollary of the subadditivity lemma, we get that $n^{-1}{\gs^{2}_{A,n}(x)}$ converges as $n\to\infty$, we denote the limit as $\gs^{2}_{A}$.
\end{proof}
To go back to our original model, we have $\sigma_{Q}:=\sigma_{Q}(0)$ and $\sigma_{A}:=\sigma_{A}(0)$. As with the free energy, we now establish lower bounds for $\gs_{Q}$ and $\gs_{A}$ to verify non degeneracy of the limiting law.

\begin{cor}\label{cor:QUvarlowerbound}
	There exists a constant $C>0$ such that with probability approaching $1$,
	\begin{align*}
		\bigl \la (U-\la U\ra_n)^{2}\bigr\ra_n \ge Cn. 
	\end{align*}
\end{cor}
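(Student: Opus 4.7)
The plan is to deduce the corollary directly from the convergence statement already established in Lemma~\ref{lem:Uvar}, namely $n^{-1}\la \widehat{U}^{2}\ra_n \toP \gs_Q^2$. Since convergence in probability to a strictly positive constant $c$ immediately yields that the event $\{\la \widehat{U}^{2}\ra_n \ge (c/2)\cdot n\}$ has probability tending to $1$, the entire content of the corollary reduces to showing that
\[
	\gs_Q^{2} = \int_{\dR} f^{1}(0,\gl)\bigl(1-f^{1}(0,\gl)\bigr) \, d\rho(\gl) = \int_{\dR} \frac{\gl^{2}}{(1+\gl^{2})^{2}}\,d\rho(\gl)
\]
is strictly positive. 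So the one thing to do is show that the limiting empirical measure $\rho$ places positive mass on a set bounded away from both $0$ and $\infty$.

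For the lower bound of the support away from $0$, I would invoke Lemma~\ref{lem:zeroanticonc}, which produces constants $\gd,\eps>0$ such that $\rho_{n}([\gd,\infty))\ge \eps$ with probability approaching $1$. By weak convergence of $\rho_n$ to $\rho$ in probability (and using, for instance, the portmanteau theorem applied to the closed set $[\gd,\infty)$ along a convergent subsequence), this transfers to $\rho([\gd,\infty))\ge \eps$. To keep the mass away from infinity, I would invoke Lemma~\ref{lem:tightness}: by tightness, for every $\eta>0$ there exists $K<\infty$ with $\rho_n([-K,K]^{c})\le \eta$ with probability approaching $1$, which in particular ensures that the weak limit $\rho$ is an honest probability measure and that $\rho([\gd,K])\ge \eps/2$ once $K$ is chosen so that $\rho((K,\infty))\le \eps/2$.

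Given these two ingredients, on the compact interval $[\gd,K]$ the integrand $\gl^{2}/(1+\gl^{2})^{2}$ is continuous and strictly positive, so it admits a positive lower bound $c_0 := \min_{\gl\in [\gd,K]} \gl^{2}/(1+\gl^{2})^{2}>0$. Consequently $\gs_Q^{2} \ge c_0 \cdot \eps/2 > 0$. Taking $C := \gs_Q^{2}/2$, the convergence $n^{-1}\la \widehat{U}^{2}\ra_n \toP \gs_Q^{2}$ then yields $\pr\bigl(\la (U-\la U\ra_n)^{2}\ra_n \ge Cn\bigr) \to 1$, which is the claim.

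The main (and essentially only) obstacle is the passage from the anti-concentration statement for $\rho_{n}$ in Lemma~\ref{lem:zeroanticonc} to a corresponding anti-concentration statement for the weak limit $\rho$; this requires combining the in-probability weak convergence of $\rho_n$ (produced via Lemma~\ref{lem:doublelaplace}) with tightness to justify that no mass is lost at infinity. All remaining steps are straightforward bookkeeping using continuity and positivity of the integrand on a compact set.
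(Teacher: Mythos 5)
Your argument is correct and rests on exactly the same two ingredients as the paper's proof --- Lemma~\ref{lem:zeroanticonc} for mass of $\rho_n$ on $[\gd,\infty)$ and Lemma~\ref{lem:tightness} to confine that mass to a compact set $[\gd,K]$ on which the integrand $\gl^2/(1+\gl^2)^2$ is bounded below. The only real difference is where the bound is applied. The paper works entirely at finite $n$: it writes $n^{-1}\la(U-\la U\ra_n)^2\ra_n$ as an integral against $\rho_n$ and bounds that integral from below directly on the high-probability event $\{\rho_n([\gd,K])\ge \eps/2\}$, so it never needs the limit measure $\rho$, the convergence statement of Lemma~\ref{lem:Uvar}, or any transfer of anti-concentration to the limit. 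You instead route the argument through $\gs_Q^2>0$, which forces you to carry the finite-$n$ anti-concentration over to $\rho$; your handling of this (portmanteau for the closed set $[\gd,\infty)$, giving $\eps\le\limsup_n\rho_n([\gd,\infty))\le\rho([\gd,\infty))$, combined with tightness to prevent mass escaping to infinity, all along an a.s.\ convergent subsequence since the weak convergence is only in probability) is valid, but it is precisely the step you identify as the main obstacle, and the paper's finite-$n$ formulation sidesteps it entirely. Your version does buy something the paper's does not state explicitly, namely the strict positivity of the limiting constant $\gs_Q^2$ itself (which is what Theorem~\ref{thm:Mvar} ultimately asserts), so the detour is not wasted; just be aware that the corollary as stated can be proved more cheaply without identifying the limit.
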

\begin{proof}
Note that, we have
\[
 n^{-1}\bigl \la (U-\la U\ra_n)^{2}\bigr\ra_n 
 = \int 2\gl^2(1+\gl^2)^{-2}\, d\rho_n(\gl).
\]
	From Lemma~\ref{lem:zeroanticonc}, we may find $\eps,\gd>0$ such that with probability approaching 1, $\rho_n[0,\gd)\le 1-\eps$. The tightness proved in Lemma~\ref{lem:tightness} says that given $\eps>0$, we may find an $K$ such that with probability approaching 1, $\rho_n{[K,\infty)}\le \eps/2$. We now use the fact that $\rho_{n}([\gd,K])\ge \eps/2$ to complete the proof. 
\end{proof}

The method to prove the lower bound for the annealed variance $\gs_{A}$ is analogous to Lemma~\ref{lem:zvarlowerbound}.
\begin{lem}\label{lem:uannealedvarlb}
	Let both the vertex and edge weights be compactly supported in $[-K,K]$ for fixed $0<K<\infty$, such that the Lee-Yang zeroes of $Z_n$ are localized to $[-M,M]$ for some fixed $0<M<\infty$ depending on $K$. For $e^x\ge M+\eps$ for some fixed $\eps>0$ there exists $C=C(\eps)> 0$ such that
	\begin{align*}
		\var \la U\ra_n^x \ge C n.
	\end{align*}
\end{lem}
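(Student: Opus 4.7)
The plan is to mirror the martingale argument of Lemma~\ref{lem:zvarlowerbound}, replacing the scalar bound on $\log W_n - \log W_n^{(e)}$ by an analogous bound for $\partial_x \log W_n - \partial_x \log W_n^{(e)}$. The new ingredient will be Lemma~\ref{lem:derlowerbound}, which guarantees a one-sided lower bound for $\partial_x \log \alpha - \partial_x \log \beta$ and thereby prevents cancellations under the conditional expectation. First, replace $G_n$ by the torus $\dT_n$ to gain edge-transitivity in the $n$-direction, denote the corresponding torus-cylinder partition function by $W_n(x)$. By the decomposition~\eqref{def:Decomp} together with the derivative bound in Lemma~\ref{lem:cutbound}, $\partial_x \log W_n$ and $\partial_x \log Z_n$ differ by an $L^2$-bounded quantity, so it suffices to prove $\var(\partial_x \log W_n) \ge cn$ for some $c>0$.

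Fix an $H$-coordinate $h^{\star}$ and enumerate the $n$ edges in that horizontal layer as $e_1,\ldots,e_n$ with weights $\go_1,\ldots,\go_n$. Applying Doob's decomposition along $\cF_j = \gs(\go_1,\ldots,\go_j)$ together with the tower property and Jensen's inequality (exactly as in~\eqref{def:varlb}) gives
\[
\var(\partial_x \log W_n) \ge \sum_{j=1}^{n} \var\bigl(\E[\partial_x \log W_n \mid \go_j]\bigr),
\]
and shift-invariance of the torus reduces the task to proving $\var(\E[\partial_x \log W_n \mid \go_1]) \ge c$.

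For the single-edge estimate, write $e_1=(u,v)$ and set $\alpha = Z_{\cG_n \setminus \{e_1\}}(x)$, $\beta = Z_{\cG_n \setminus \{u,v\}}(x)$, so $W_n = \alpha + \beta e^{\go_1}$. A direct computation yields the perturbation formula
\[
\partial_x \log W_n - \partial_x \log W_n^{(1)} = \int_{\go_1'}^{\go_1} \frac{\alpha \beta e^z \,(\partial_x \log \beta - \partial_x \log \alpha)}{(\alpha + \beta e^z)^2}\, dz.
\]
Since the weights are compactly supported, the Lee-Yang zeros of $\alpha$ lie in a deterministic interval $[-M,M]$ by~\eqref{def:zerobound}, and $\cG_n \setminus \{u,v\}$ is obtained from $\cG_n \setminus \{e_1\}$ by deleting two vertices; Lemma~\ref{lem:derlowerbound} therefore gives $\partial_x \log \alpha - \partial_x \log \beta \ge (e^{2x}-M^2)/(e^{2x}+M^2) > 0$ under the hypothesis $e^x \ge M + \eps$. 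Separately, the recursion~\eqref{def:recur} applied at $u$ and $v$, combined with bounded weights and bounded degree, shows $\alpha/\beta$ lies in a deterministic bounded range, so that $p(z)(1-p(z))$ with $p(z)=\beta e^z/(\alpha+\beta e^z)$ is bounded below by a deterministic $c_1 > 0$ uniformly in $z \in [-K,K]$.

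Combining these two facts, the integrand above has constant (negative) sign with absolute value bounded below by a deterministic $c_0>0$, so on the event $\{\go_1 > \go_1'\}$ one has $\partial_x \log W_n - \partial_x \log W_n^{(1)} \le -c_0 (\go_1 - \go_1')_+$. Because this sign is deterministic, averaging over the remaining randomness preserves the absolute value, yielding $|h_n(\go_1) - h_n(\go_1')| \ge c_0 (\go_1 - \go_1')_+$ where $h_n(\go_1) := \E[\partial_x \log W_n \mid \go_1]$. Squaring, taking expectation and using the non-degeneracy of $\go_1$ then gives $\var(h_n(\go_1)) \ge \tfrac12 c_0^2\, \E[(\go_1 - \go_1')_+^2] > 0$, which completes the argument when combined with the martingale reduction. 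The main obstacle is precisely this sign issue: taking conditional expectation can in principle destroy the lower bound through cancellations, and it is exactly Lemma~\ref{lem:derlowerbound}, under the hypothesis $e^x > M$, that renders the integrand sign-definite and thus keeps the estimate sharp after conditioning.
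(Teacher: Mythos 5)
Your proposal is correct and follows essentially the same route as the paper: reduce to the torus, apply the Doob martingale decomposition along a single layer of edges, differentiate the single-edge perturbation formula in $x$, and use Lemma~\ref{lem:derlowerbound} together with the deterministic bound on $\ga_{e_1}/\gb_{e_1}$ to get a sign-definite lower bound that survives the conditional expectation. Your explicit remark that the sign-definiteness supplied by Lemma~\ref{lem:derlowerbound} is what prevents cancellation under $\E[\,\cdot\mid\go_1]$ is exactly the point the paper's terser "squaring and taking the expectation completes the result" relies on.
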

\begin{proof}
	The proof, as with the case of $\gs_{F}$ is about extracting the bounds on the influence of a single edge. The martingale argument then yields the lower bound. Recall the filtration $\cF_j$ defined in Lemma~\ref{lem:zvarlowerbound}. With the same notation, we have
	\begin{align*}
		\partial_x \log W_n-\partial_x \log W_n^{j}=\partial_x \int_{\go_j'}^{\go_j }\frac{e^z}{e^z+{\ga_{e_j}}/{\gb_{e_j}}}\, dz.
	\end{align*}
	We now apply the triangle inequality and Lemma~\ref{lem:derlowerbound} to obtain
	\begin{align*}
		\abs{ \partial_x\log W_n-\partial_x\log W^{(j)}_n }
		 & = \abs{ \partial_x \left(\frac{\ga_{e_j}}{\gb_{e_j}}\right) }\int_{\go_j^{'}\land \go_j}^{\go_j'\lor \go_j} \frac{e^z}{\left(e^z + {\ga_{e_j}}/{\gb_{e_j}} \right)^2}\, dz \\
		 & \ge \left(\frac{e^{2x}-M^2}{e^{2x}+M^2}\right)_{+} \int_{\go_j^{'}\land \go_j}^{\go_j' \lor \go_j} \frac{e^z}{\left( e^z+{\ga_{e_j}}/{\gb_{e_j}}\right)^2}\, dz \\
		 & \ge \left(\frac{e^{2x}-M^2}{e^{2x}+M^2}\right)_{+}\int_{\go_j^{'}\land \go_j}^{\go_j'\lor \go_j}\frac{e^z}{(e^z+\exp(2x)+(1+M)^{d_{max}})^{2}}\, dz.
	\end{align*}
	Note that the integrand may be rewritten as $y^{-1}\sech^{2}\left(\frac{z+y}{2}\right)$ where \[y=\exp(2x)+(1+M)^{d_{max}}.\]
	For the fixed compact interval $[-K,K]$ which our weights are restricted to, the integrand is uniformly bounded below in $z$. Applying this lower bound, squaring and then taking the expectation completes the result. The non transitive case follows in a procedure identical to that followed in Lemma~\ref{lem:zvarlowerbound}.
\end{proof}
\begin{cor}\label{cor:UAvar}
	Let the support of $\go$ be contained in $[a,b]$ and the support of $\nu$ be contained in $[c,d]$ such that $b-2c<-\log d_{max}$. Then we have a constant $C>0$ such that
	\begin{align*}
		\var \la U\ra_n\ge Cn.
	\end{align*}
\end{cor}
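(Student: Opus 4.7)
The plan is to show that Corollary~\ref{cor:UAvar} follows immediately from Lemma~\ref{lem:uannealedvarlb} once one verifies its hypotheses at $x=0$. There are two things to check: compact support of the weights, and the existence of an $M$ with $1 = e^0 \ge M+\eps$ such that the Lee-Yang zeroes lie in $[-M,M]$. The former is immediate from the hypothesis, taking $K = \max(|a|,|b|,|c|,|d|)$ so that $\go, \nu \in [-K,K]$ almost surely.

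For the localization, I would appeal to the Heilmann--Lieb bound~\eqref{def:zerobound}, which states that every Lee-Yang zero $\gl_i$ satisfies $|\gl_i| \le \sup_{u} \Delta_u$, where $\Delta_u = \sum_{e\sim u} \exp(\tilde{\go}_e)$ is the weighted degree after gauge transformation, with $\tilde{\go}_{(u,v)} = \go_{(u,v)} - \nu_u - \nu_v$. Using $\go_e \le b$ and $\nu_v \ge c$ almost surely, I would bound $\tilde{\go}_e \le b-2c$, which gives $\Delta_u \le \deg(u)\cdot e^{b-2c} \le d_{max}\cdot e^{b-2c}$ for every vertex $u$. Setting $M := d_{max}\cdot e^{b-2c}$, the hypothesis $b-2c < -\log d_{max}$ translates exactly into $M<1$ almost surely.

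Having $M<1$, I would choose $\eps := (1-M)/2 > 0$, so that the inequality $e^x \ge M + \eps$ of Lemma~\ref{lem:uannealedvarlb} is satisfied at $x=0$. Applying the lemma with $x=0$ yields a constant $C>0$ (depending on $\eps$, hence on $a,b,c,d,d_{max}$) such that $\var \la U\ra_n = \var \la U\ra_n^0 \ge Cn$, which is the desired conclusion. The non-transitive boundary case is handled in Lemma~\ref{lem:uannealedvarlb} itself, so no extra work is needed here.

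There is no real obstacle: the content of the corollary is the precise matching of the condition $b - 2c < -\log d_{max}$ with the Heilmann--Lieb localization bound at $x=0$. Everything harder --- the martingale lower bound from Doob's decomposition, the bound on the influence of a single edge, and the use of Lemma~\ref{lem:derlowerbound} --- has already been absorbed into the statement of Lemma~\ref{lem:uannealedvarlb}.
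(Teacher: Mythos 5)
Your proposal is correct and follows essentially the same route as the paper: both verify the hypotheses of Lemma~\ref{lem:uannealedvarlb} at $x=0$ by bounding the gauge-transformed weights $\tilde{\go}_e=\go_e-\nu_u-\nu_v\le b-2c<-\log d_{max}$, deducing $\Delta_u<1$ and hence localization of the Lee-Yang zeroes to $[-M,M]$ with $M<1$ via the Heilmann--Lieb bound~\eqref{def:zerobound}. Your version is merely slightly more explicit in naming $M=d_{max}e^{b-2c}$ and $\eps=(1-M)/2$.
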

\begin{proof}
	Let $e=(u,v)$ be an edge. The gauge transformed weight of $e$ is $\tilde{\go}_e=\go_e-\nu_u-\nu_v$. By hypothesis, this implies that with probability $1$,
	\begin{align*}
		\tilde{\go}_e<-\log d_{max}.
	\end{align*}
	Thus for any vertex $v$,
	\begin{align*}
		\Delta_v=\sum_{e\sim v}\exp(\tilde{\go}_e)<1
	\end{align*}
	This implies that there is an $M<1$ such that all Lee-Yang zeroes associated to $Z_n(x)$ are located in $[-M,M]$. Lemma~\ref{lem:uannealedvarlb} now applies with $x=0.$
\end{proof}

\subsubsection{Proof of Theorem~\ref{thm:Mvar}}

Here we combine the previous results proved in this section, to complete the proof of Theorem~\ref{thm:Mvar}. Convergence of $n^{-1}\la U\ra_{n}$ follows from Corollary~\ref{cor:Uconv}, by taking $x=0$. Variance bounds follow from Lemma~\ref{lem:Uvar}, Corollaries~\ref{cor:QUvarlowerbound} and~\ref{cor:UAvar}. \qed

\subsection{Central Limit Theorems} 

In the previous subsection we established the asymptotic mean behavior of $U$, proved convergence of the respective variances and provided sufficiency conditions for their non degeneracy. We are now ready to prove both the quenched and annealed central limit theorems.

\subsubsection{Proof of Theorem~\ref{thm:uclt}}
Consider the quenched moment generating function $\Gamma^{\widehat{U}}_n(\xi,x)$ of $\widehat{U}$ given by
\begin{align*}
	\Gamma^{\widehat{U}}_n(\xi,x):=\bigl\la \exp(\xi\cdot n^{-\half} \widehat{U})\bigr\ra^x _n
	\text{ for }
	\xi\in \dR \text{ fixed}.
\end{align*}
We have,
\begin{align*}
	\Gamma^{\widehat{U}}_n(\xi,x)
	 & =\frac{1}{Z(x)}\sum_{\fm \in \cM}\exp\left( \xi\cdot n^{-\half} \widehat U(\fm)+x U(\fm)\right)\prod_{e \in \fm}\exp(\tilde{\go}_e)\\
	&=\exp \bigl( n^{-\half}\xi\cdot \la U\ra^x _n \bigr)\cdot {Z\bigl(x+n^{-\half}\xi\bigr)}/{Z(x)}.
\end{align*}
Clearly,

\begin{align*}\log \Gamma^{\widehat{U}}_n(\xi,x)=\log Z\bigl(x + n^{-\half}\xi\bigr)-\log Z(x)-n^{-\half}\xi \cdot \partial_x \log Z(x).\end{align*}
Using Taylor's theorem upto second order, we get
\begin{align*}\log \Gamma^{\widehat{U}}_n(\xi,x)= n^{-1} \xi^{2} \cdot\partial_x^{2}\log Z(x )+\cR(\xi,x,n),\end{align*}
where the error term $\cR$ can be written in terms of the third derivative with respect to $x$
\begin{align*}
	|\cR|\le n^{-\half} \cdot |\xi|^{3}\int \abs{ \partial_x^2 (1+\gl^2e^{-2x})^{-1}}\, d\rho_n(\gl).
\end{align*}
It suffices to consider $\xi$ from some fixed compact interval. The integrand is a bounded function in both $x$ and $\gl$, therefore the error decays to $0$ in the limit. We have that
\[
 \log \Gamma^{\widehat{U}}_n(\xi)\toP \frac{1}{2}\gs_{Q}^2(x)\xi^{2} \text{ for all }\xi.
\]
This is exactly the cumulant generating function of the Gaussian with variance $\sigma_{Q}^{2}(x)$. Weak convergence is guaranteed by the fact that we have $\bigl\la \cosh {n^{-\half}\widehat{U}}\bigr\ra^x _n$ bounded in probability, which implies that the laws of $\widehat{U}$ are tight in probability. Uniqueness follows from the uniqueness of the Laplace transform. To conclude, we take $x=0$.\qed\\

We move towards the joint Central Limit Theorem. For this purpose we need to use the exponential tilting introduced for Lemma~\ref{lem:cutbound}. Thus, $Z=Z(x_1,x_{2})$, $Z_{[1:k]}=Z_{[1:k]}(x_1)$ and $Z_{[k+1:n]}=Z_{[k+1:n]}(x_{2})$. Let $\mv{x}$ denote $(x_1,x_{2})^{T}.$

\subsubsection{Proof of Theorem~\ref{thm:jointclt}}

In complete analogy to proof of the Central Limit Theorem, we now examine the behavior of the MGF of the vector $n^{-\half}\widehat{\mv{U}}$, that is
\begin{align*}
	\Gamma^{\widehat{\mv{U}}}_n(\mv{\xi},\mv{x})=\left\la \exp(\mv{\xi}\cdot n^{-\half}\mv{\widehat{U}}) \right\ra^{\mv{x}} _n
\end{align*}
where
$
	\mv{\xi}:=(\xi_1,\xi_{2})^{T}\text{ and }\mv{U}:=(U_{[1:k]},U_{[k+1:n]})^{T}.
$
Taking the logarithm of $G_n$ and Taylor expanding, we get
\begin{align*}
	 & \Gamma^{\widehat{\mv{U}}}_n(\mv{\xi},\mv{x}) \\
	 & \quad =\log Z \bigl(x_1+n^{-\half}\xi_1,x_{2}+n^{-\half}\xi_{2}\bigr)-\log Z(x_1,x_{2})-n^{-\half}\xi_1\la U_{[1:k]}\ra^{\mv{x}}_n - n^{-\half}\xi_{2}\la U_{[k+1:n]}\ra^{\mv{x}}_n \\
	 & \quad
	=n^{-1}\cdot \mv{\xi}^{T} \mv{\gS}(n) \mv{\xi} +n^{-3/2}\cR(\xi_1,\xi_{2},\nu).
\end{align*}
Here, $\mv{\gS}(n)$ is the matrix of second derivatives, and has the form \begin{align*}\mv{\gS}_n=\diag(\partial_x^{2}\log Z_{[1:k]},\partial_x^{2} \log Z_{[k+1,n]})+D^{2}R_{n,k}\end{align*}
where $D^{2}R_{n,k}$ denotes the Hessian matrix of the remainder term $R_{n,k}$ with respect to $x_1$ and $x_{2}$. We may apply Lemma~\ref{lem:cutbound} to conclude that \begin{align*}n^{-1}\norm{D^{2}R_{n,k}}\toP 0\end{align*} for any choice of matrix norm $\norm{\cdot}$. Theorem~\ref{thm:Mvar} can then be applied to yield
\begin{align*}
	n^{-1}\mv{\gS}_n\toP \sigma_{Q}^{2}\cdot \diag(t,1-t)
	.\end{align*}
As for the remainder term, the pure third derivatives can be bounded as per the same argument in theorem~\ref{thm:uclt}. As for the mixed derivatives, Lemma~\ref{lem:cutbound} yields that $|\partial_x^{\mv{i}}\log Z|\le C(i_1,i_{2})$ since the mixed derivative of the separated free energies is zero.
In the limit, we have that pointwise for all $\mv{x}$
\begin{align*}
	\log \Gamma^{\widehat{\mv{U}}}_n \toP \mv{\xi}^{T} \mv{\gS} \mv{\xi}.
\end{align*}
This establishes the joint Central Limit Theorem, as the tightness can be established again via an argument analogous to Theorem~\ref{thm:uclt}. We conclude with setting $\mv{x}=0$.\qed\\

Moving to the annealed Central Limit Theorem for $\la U\ra_n$, we must proceed via the same dyadic subdivison route as in Theorem~\ref{thm:Zclt}.

\subsubsection{Proof of Theorem~\ref{thm:uhatclt}}
	The procedure is identical to the proof of Theorem~\ref{thm:Zclt}, all that is required is to systematically replace each $Z$, $R$ and $T$ with the respective $x$ derivatives and apply Lemma~\ref{lem:cutbound}. We therefore omit a complete proof.\qed

\section{Limiting Height Function and Brownian Motion}\label{sec:BM}
In this section, we characterize the limiting behavior of the height function $\theta_n(t)=U_{[1:\lfloor nt\rfloor]}$. This is towards understanding the structure of a typical matching $\fm$ chosen with respect to $\mu$. We first establish the mean behavior, and then characterize the fluctuations. The law of large numbers results (both quenched and annealed) combine to yield that
\begin{align*}
 \pr\left(\abs{ \theta_n(t)-\E\theta_n(t) }\ge \eps n\right)\to 0.
\end{align*}
The limiting height function $\theta(t)$ is given by \begin{align*}
 \theta(t)=\lim_{n\to \infty} {\theta_n(t)}/{n},\text{ }t\in [0,1].
\end{align*} 
We begin this section by characterizing $\theta$.

\begin{lem}\label{lem:linearGrowth}
	Let $u$ be as given in~\eqref{def:vertexdensity}. There exists a finite constant $C$ such that for all $m\in \dN$
	\begin{align*}|\E\la U\ra _{m}- m\cdot u|\le C\end{align*}

\end{lem}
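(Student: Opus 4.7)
The plan is to upgrade the convergence $a_m/m \to u$ (from Lemma~\ref{lem:Umean} at $x=0$, with $a_m := \E\la U\ra_m$) into an $O(1)$ bound $|a_m - mu| \le C$ by exploiting the almost-additivity of $\{a_m\}_{m\ge 1}$ inherited from the block decomposition~\eqref{def:Decomp}. The key input is Lemma~\ref{lem:cutbound}, which provides the uniform-in-$(n,k)$ control on the first tilt-derivative of the error term $R_{n,k}$ needed to make the error in the decomposition summable.

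Concretely, I would differentiate the identity $\log Z_n(x) = \log Z_{[1:k]}(x) + \log Z_{[k+1:n]}(x) + R_{n,k}(x)$, viewed under the single-$x$ exponential tilt, and evaluate at $x=0$. Using $\partial_x \log Z(x)|_{x=0} = \la U\ra$ together with block exchangeability (namely, $\E\la U\ra_{[1:k]} = \E\la U\ra_k$, and analogously for the complement), this yields $a_n = a_k + a_{n-k} + \E(\partial_x R_{n,k})|_{x=0}$. Lemma~\ref{lem:cutbound} with $\mv{i} = (1,0)$ and $\mv{i}=(0,1)$ gives $|\partial_x R_{n,k}|\le C$ almost surely, hence $|a_n - a_k - a_{n-k}| \le C$ uniformly in $1 \le k \le n-1$. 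Writing $b_m := a_m - mu$, the same inequality transfers to $b$: $|b_n - b_k - b_{n-k}| \le C$.

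I would conclude by iterating with $k=n$: the estimate $|b_{2n} - 2b_n| \le C$ yields, by straightforward induction, $|b_{2^j n} - 2^j b_n| \le C(2^j-1)$, so $|b_{2^j n}/2^j - b_n| \le C$ for every $j \ge 0$. Since $a_m/m \to u$ forces $b_m/m \to 0$ as $m\to\infty$, for any fixed $n$ the quantity $b_{2^j n}/2^j = n \cdot (b_{2^j n}/(2^j n))$ tends to $0$ as $j\to\infty$, whence $|b_n| \le C$.

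There is no substantial obstacle here: the genuinely analytic content is the pointwise bound on $\partial_x R_{n,k}$ already established in Lemma~\ref{lem:cutbound}, and what remains is the elementary observation that an almost-additive sequence whose Cesaro averages converge must stay at bounded distance from its asymptotic linear interpolation.
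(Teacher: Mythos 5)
Your proof is correct. It rests on the same analytic input as the paper's --- the uniform bound $|\partial_x R_{n,k}|\le C$ from Lemma~\ref{lem:cutbound}, which gives the almost-additivity $|a_n-a_k-a_{n-k}|\le C$ for $a_m:=\E\la U\ra_m$ (exactly the inequality the paper isolates in Lemma~\ref{lem:Umean}) --- but the execution is genuinely different and cleaner. The paper redeploys the full dyadic-subdivision machinery of Theorem~\ref{thm:Zclt} for an arbitrary $n$: it expands $\E\partial_x\log Z_n$ over the binary tree into $2^k$ blocks of size $m=\pi_k(\mva)$, accumulates central and terminal errors of total size $C'2^k$, and then sends the tower height to infinity above a fixed $m$, which forces bookkeeping for the mismatch $\gc_{n,p}=1-2^pm/n$ and for the terminal layers dropped whenever a dyadic digit equals $1$. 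You instead fix $n$ and only ever double, so every cut is exact and no terminal errors or digit corrections arise; the induction $|b_{2^jn}-2^jb_n|\le C(2^j-1)$ for $b_m:=a_m-mu$, followed by division by $2^j$ and the observation that $b_m/m\to 0$ (Lemma~\ref{lem:Umean} at $x=0$), is the standard Fekete-type upgrade and closes the argument in a few lines. One small point worth making explicit: for the single-parameter tilt, $\partial_x R_{n,k}$ is $(\partial_{x_1}+\partial_{x_2})R_{n,k}$ evaluated on the diagonal $x_1=x_2$, so the resulting constant is $2C$ rather than $C$; you implicitly acknowledge this by invoking both multi-indices $(1,0)$ and $(0,1)$.
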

\begin{proof}

	Recall the dyadic subdivision as introduced in Section~\ref{section:MFE} along with the accompanying notation, and let $n=\sum_{i=0}^{l}a_i2^i$. Recall from~\eqref{def:dyadic}
	\begin{align*}
		\log Z_n=\sum_{v\in V_{k}} \log Z^{v}_{\pi_{k}(\mva)}+\sum_{j=1}^k\bigl(\sum_{v\in V_{j-1}}(R_{j-1}^{v}+a_jT_{j-1}^{v})\bigr).
	\end{align*}
	Taking the derivative, the expectation, and then absolute value of both sides,
	\begin{align*}
		\abs{ \E\partial_x\log Z_n -\E \sum_{v\in V_{k}} \partial_x\log Z^{v}_{\pi_{k}(\mva)} }
		\le \sum_{j=1}^k\sum_{v\in V_j}\E\abs{ \partial_x R_{j-1}^{v} +a_j\partial_xT_{j-1}^{v} }.
	\end{align*}
	Applying Lemma~\ref{lem:cutbound}, we get
	\begin{align*}
		\abs{ \E\partial_x\log Z_n -\E \sum_{v\in V_{k}} \partial_x\log Z^{v}_{\pi_{k}(\mva)}}
		\le C\sum_{i=1}^k2^i=C'2^k.
	\end{align*}
	Dividing both sides by $n$ and simplifying, we have
	\begin{align*}
		\abs{ \frac{1}{n}\E\partial_x\log Z_n- \frac{2^k}{n}\E \partial_x\log Z_{m} }\le C'\frac{2^k}{n}
	\end{align*}
	where $m=\pi_{k}(\mva)$. Let $p=l-k$ and
	\begin{align*}
		\gc_{n,p}:=1-\frac{2^{p}m}{n}.
	\end{align*}
	It is easy to show that
	\begin{align*}
		|\gc_{n,p}|\le \frac{2}{m}\le 1.
	\end{align*}
	Multiplying the entire expression by $m$ and applying the triangle inequality, we get

	\begin{align*}
		\abs{ m\frac{1}{n}\E\partial_x\log Z_n- \E \partial_x\log Z_{m} }-|\gc_{n,p}|\E\partial_x\log Z_{m}\le C'.
	\end{align*}
	Let
	\begin{align*}
		|\gc_n|:=\sup_{p}|\gc_{n,p}|\le \frac{2}{m}.
	\end{align*}
	Rearranging the terms and using the fact that $0\le \partial_x \log Z_{m}\le m$ (there are at most $m$ unpaired vertices), we obtain

	\begin{align*}
		\abs{ m\cdot \frac{\E \partial_x\log Z_n}{n} - \E \partial_x \log Z_{m} }\le C' +2.
	\end{align*}
	Note that letting $p\to \infty$ implies that $n \to \infty$ while keeping $m$ fixed. Thus, we have
	$|m\cdot u-\E\la U\ra _{m}|\le C'+2.$
	This completes the proof.
\end{proof}
\begin{cor}
	The limiting height function $\theta(t)$ is given by $\theta(t)=u\cdot t$
\end{cor}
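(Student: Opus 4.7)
The plan is to combine the deterministic mean estimate of Lemma~\ref{lem:linearGrowth} with the concentration of $U_{[1:k]}$ around its mean recorded at the start of this section. Since by definition $\theta_n(t)/n = U_{[1:\lfloor nt\rfloor]}/n$, it suffices to show $n^{-1}U_{[1:\lfloor nt\rfloor]}\toP ut$ for each fixed $t\in[0,1]$.

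First I would establish the mean convergence $n^{-1}\E\theta_n(t)\to ut$ using the two-variable exponential tilting set up for Lemma~\ref{lem:cutbound}. Writing $k=\lfloor nt\rfloor$ and differentiating the identity
\[
\log Z_n(x_1,x_2) = \log Z_{[1:k]}(x_1) + \log Z_{[k+1:n]}(x_2) + R_{n,k}(x_1,x_2)
\]
with respect to $x_1$ at $(0,0)$ yields
\[
\la U_{[1:k]}\ra_n = \la U\ra_k + \partial_{x_1}R_{n,k}(0,0),
\]
where $\la U\ra_k$ denotes the Gibbs average of unpaired vertices for the model on the shorter cylinder $\cG_{[1:k]}$ of length $k$ and the middle term in the decomposition drops out because it is $x_1$-independent. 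Lemma~\ref{lem:cutbound} bounds the error term uniformly, $\abs{\partial_{x_1}R_{n,k}(0,0)}\le C$, while Lemma~\ref{lem:linearGrowth} applied to the cylinder $\cG_k$ gives $\abs{\E\la U\ra_k - ku}\le C$. Taking expectations and combining yields $\abs{\E\theta_n(t) - \lfloor nt\rfloor\cdot u} \le 2C$, so dividing by $n$ produces $n^{-1}\E\theta_n(t)\to tu$.

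Second I would invoke the concentration estimate $\pr(\abs{\theta_n(t)-\E\theta_n(t)}\ge\eps n)\to 0$ from the opening of this section, which follows from an obvious adaptation of the Efron--Stein argument of Lemma~\ref{lem:efron2} giving $\var\la U_{[1:k]}\ra_n\le Cn$, the quenched control $n^{-1}(U_{[1:k]}-\la U_{[1:k]}\ra_n)\toP 0$ inherited from Theorem~\ref{thm:Mvar}, and Chebyshev's inequality. Slutsky's theorem then combines the mean convergence of the previous step with this vanishing fluctuation to give $n^{-1}U_{[1:\lfloor nt\rfloor]}\toP tu$, which is the corollary.

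No genuine obstacle is expected: every tail and moment bound needed has already been proved in the preceding sections. The only point that requires care is that $\la U_{[1:k]}\ra_n$ is a Gibbs average under the full-graph measure $\mu$ on $\cG_n$, not under the restricted measure on $\cG_{[1:k]}$ to which Lemma~\ref{lem:linearGrowth} directly applies, and the two-variable tilting decomposition is precisely the bridge between them with $O(1)$ error uniform in the location of the cut.
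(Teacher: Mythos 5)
Your proposal is correct and follows essentially the same route the paper intends: the paper leaves the corollary's proof implicit, relying on the concentration statement at the opening of Section~\ref{sec:BM} together with Lemma~\ref{lem:linearGrowth}, and the bridge you supply via the two-variable tilting decomposition and Lemma~\ref{lem:cutbound} (to pass from $\la U_{[1:k]}\ra_n$ to $\la U\ra_k$ with $O(1)$ error) is exactly the step the authors use later in the proof of Theorem~\ref{thm:BM}. Nothing is missing.
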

With the centering term calculated, we are now ready to characterize the limiting fluctuations about ${\theta}(t)$.

\subsection{Proof of Theorem~\ref{thm:BM}}
	Let $0\le t_1<t_{2}< \ldots<t_{k}\le 1$. To verify the distributional convergence, we need to establish three properties
	\begin{enumerate}
		\item\textbf{Starting at Zero}: $\widehat{\theta}_n(0)=0$
		\item \textbf{Joint Normality}: $\widehat{\theta}_n(t_{i+1})-\widehat{\theta}_n(t_i) \tod \N(0,(t_{i+1}-t_i)\gs^{2})$ for all $1\le i<k$
		\item \textbf{Independent Increments}: $\{\widehat{\theta}_n(t_{i+1})-\widehat{\theta}_n(t_i)\}_{i=1}^{k-1}$ are asymptotically jointly independent.
	\end{enumerate}
	For simplicity,we consider the case $k=3$ with two disjoint time intervals. The method of proof can easily be adapted to establish the general case. The first property is trivial. We will prove the latter two together. Consider the random vector
	\begin{align*}
		\mv{\Theta}_n=n^{-\half}\left({\widehat{\theta}}_n(t_{3})-{\widehat{\theta}}_n(t_{2}),{\widehat{\theta}}_n(t_{2})-{\widehat{\theta}}_n(t_1) \right)^T.
	\end{align*}
	The next step is applied to both components and is illustrated with the first component. Observe that
	\begin{align*}
		{\widehat{\theta}}(t_{3})-{\widehat{\theta}}(t_{2})=
		n^{-\half}\left(U_{[1:\lfloor t_{2}n\rfloor]}-U_{[1:\lfloor t_{3}n\rfloor] }-n (t_{2}-t_{3})u\right).
	\end{align*}
	Lemma~\ref{lem:linearGrowth} tells us that $n(t_{2}-t_{3})u$ can be replaced by $\E \la U \ra _{\lfloor t_{3}n\rfloor-\lfloor t_{2}n \rfloor}$ by incurring at most a constant error, which we denote by $R_{t_{2},t_{3},n}$. Further, $U_{[1:\lfloor t_{3}n\rfloor]}-U_{[1:\lfloor t_{2}n\rfloor]}=U_{[\lfloor t_{2}n \rfloor :\lfloor t_{3}n\rfloor]}$ So, we are left with
	\begin{align*}
		n^{-\half}\left(U_{[\lfloor t_{2}n \rfloor:\lfloor t_{3}n \rfloor]}-\E \la U \ra _{\lfloor t_{3}n\rfloor-\lfloor t_{2}n \rfloor}\right)+n^{-\half}{R_{t_{2},t_{3},n}}.
	\end{align*}
	We now separate into quenched and annealed components as
	\begin{align*}
		n^{-\half}\left(U_{[\lfloor t_{2}n \rfloor:\lfloor t_{3}n\rfloor]}-\la U_{[\lfloor t_{2}n \rfloor:\lfloor t_{3}n\rfloor]}\ra _n\right) + n^{-\half}\left(\la U_{[\lfloor t_{2}n \rfloor:\lfloor t_{3}n\rfloor]}\ra _n-\E \la U \ra _{\lfloor t_{3}n\rfloor-\lfloor t_{2}n \rfloor}\right)+n^{-\half}R_{t_{2},t_{3},n}.
	\end{align*}
	Applying Lemma~\ref{lem:cutbound} we can replace $\la U_{[\lfloor t_{2}n \rfloor:\lfloor t_{3}n\rfloor]}\ra _n$ with $\la U\ra _{\lfloor t_{3}n\rfloor -\lfloor t_{2} n \rfloor}$ by incurring a constant bounded error, which we absorb into $R_{t_{2},t_{3},n}$. Correspondingly, we may also replace $\la U_{[\lfloor t_1n \rfloor:\lfloor t_{2}n\rfloor]}\ra _n$ with $\la U\ra _{\lfloor t_{2}n\rfloor -\lfloor t_1 n \rfloor}$. Since the weights on their respective sections are independent, it follows that $\la U\ra _{\lfloor t_{3}n\rfloor -\lfloor t_{2} n \rfloor}$ and $\la U\ra _{\lfloor t_{2}n\rfloor -\lfloor t_1 n \rfloor}$ are independent. We now separate $\mv{\Theta}$ into quenched and annealed components. More precisely, we define
	\begin{align*}
		\mv{\Theta}^Q_n
		 & :=n^{-\half}\left(U_{[\lfloor t_{2}n \rfloor:\lfloor t_{3}n\rfloor]}-\la U_{[\lfloor t_{2}n \rfloor:\lfloor t_{3}n\rfloor]}\ra _n,U_{[\lfloor t_1n \rfloor:\lfloor t_{2}n \rfloor]}-\la U_{[\lfloor t_1n \rfloor:\lfloor t_{2}n\rfloor]}\ra _n\right)^{T} \\
		\mv{\Theta}^{A}_n
		 & :=n^{-\half} \left(\la U\ra _{\lfloor t_{3}n\rfloor -\lfloor t_{2} n \rfloor}-\E\la U\ra _{\lfloor t_{3}n\rfloor -\lfloor t_{2} n \rfloor} ,\la U\ra _{\lfloor t_{2}n\rfloor -\lfloor t_1 n \rfloor}-\E\la U\ra _{\lfloor t_{2}n\rfloor -\lfloor t_1 n \rfloor}\right)^{T} \\
		\text{and }
		\mv{R}_n
		 & :=n^{-\half}(R_{t_{2},t_{3},n},R_{t_1,t_{2},n}).
	\end{align*}
	Clearly,
	\begin{align*}
		\mv{\Theta}_n=\mv{\Theta}^Q_n +\mv{\Theta}^{A}_n + \mv{R}_n.
	\end{align*}
	Let
	\begin{align*}
		\mv{\xi}= (\xi_1,\xi_{2})^{T},\text{ } \mv{\zeta}=(\zeta_1,\zeta_{2})^{T} \text{ and } \mv{\gc}=(\gc_1,\gc_{2})^{T}\in \dR^{2}.
	\end{align*}
	We work with the characteristic function
	\begin{align*}
		\Phi_n\left( \mv{\xi},\mv{\zeta},\mv{\gc} \right):=\E \exp \left( \sqrt{-1}\mv{\xi}\cdot \mv{\Theta}^Q_n +\sqrt{-1}\mv{\zeta}\cdot \mv{\Theta}^{A}_n +\sqrt{-1}\mv{\gc}\cdot \mv{R}_n\right).
	\end{align*}
	Observe that
	\begin{align*}
		 & |\E\exp(\sqrt{-1}\mv{\xi}\cdot \mv{\Theta}_n^{Q} +\sqrt{-1}\mv{\zeta}\cdot \mv{\Theta}_n^{A}+\sqrt{-1}\mv{\gc}\cdot \mv{R}_n)-\E\exp(\sqrt{-1}\mv{\xi}\cdot \mv{\Theta}_n^{Q} +\sqrt{-1}\mv{\zeta}\cdot \mv{\Theta}_n^{A})| \\
		 & \qquad\le \E|1-\exp\sqrt{-1}\mv{\gc}\cdot \mv{R}_n|
		\le \norm{\mv{\gc}\cdot \mv{R}_n}_{\infty} \le n^{-\half}\cdot C \cdot |\mv{\gc}|.
	\end{align*}
	Thus, for all values of $\mv{\gc}$,
	\begin{align}\label{def:BMerror}
		|\Phi_n(\mv{\xi},\mv{\zeta},\mv{\gc})-\Phi_n(\mv{\xi},\mv{\zeta},0)|\to 0 \text{ as } n\to \infty.
	\end{align}
	We may effectively ignore the remainder term and drop the argument $\mv{\gc}$ from $\Phi^{}$. We tackle the quenched term $\mv{\Theta}_n^{Q}$ next. It is clear that
	\begin{align*}
		\Phi_n\left( \mv{\xi},\mv{\zeta} \right)
		=
		\E\bigl( \left\la \exp(\sqrt{-1} \mv{\xi}\cdot \mv{\Theta}^{Q}_n) \right\ra \exp(\sqrt{-1} \mv{\zeta}\cdot\mv{\Theta}^{A}_n)\bigr).
	\end{align*}
	Let
	\begin{align*}
		\mv{\gS}^{Q}
		:=\gs^{2}_{Q}\cdot\diag(t_3-t_2, t_2-t_1).
	\end{align*}
	By Theorem~\ref{thm:jointclt} the following is uniformly bounded and
	\begin{align*}
		\abs{ \left\la \exp(\sqrt{-1} \mv{\xi}\cdot \mv{\Theta}^Q_n) \right\ra - \exp (-\mv{\xi}^{T} \mv{\gS}^{Q} \mv{\xi}) }\toP 0 \text{ as }n\to \infty.\end{align*}
	Thus the dominated convergence theorem tells us that
	\begin{align}\label{def:BMquenched}
		|\Phi_n(\mv{\xi},\mv{\zeta})-\exp(-\mv{\xi}^{T} \mv{\gS}^{Q} \mv{\xi})\E\exp (\sqrt{-1}\mv{\zeta}\cdot \mv{\Theta}_n^{A})|\to 0.
	\end{align}
	To find the limit of $\Phi_n$, the final step is to evaluate the limit of
	$
		\E \exp( \sqrt{-1}\mv{\zeta}\cdot \mv{\Theta}^{A}_n).
	$
	Let
	\begin{align*}
		\mv{\gS}^{A}
		=\gs^{2}_{A}\cdot\diag(t_3-t_2, t_2-t_1).
	\end{align*}
	By construction, the components of the annealed vector are independent of each other, and by Theorem~\ref{thm:uhatclt} we have
	\begin{align} \label{def:BMannealed}
		\E \exp(\sqrt{-1} \mv{\zeta} \cdot \mv{\Theta}^{A}_n) \to \exp(-\mv{\zeta}^{T}\mv{\gS}^{A} \mv{\zeta}).
	\end{align}
	Combining equations~\eqref{def:BMerror},~\eqref{def:BMquenched} and~\eqref{def:BMannealed}, we conclude that
	\begin{align*}
		\Phi_n(\mv{\xi},\mv{\zeta},\mv{\gc})
		\to
		\exp(-\mv{\xi}^{T}\mv{\gS}^{Q} \mv{\xi} -\mv{\zeta}^{T} \mv{\gS}^{A}\mv{\zeta}).
	\end{align*}
	What we have proved is that in distribution in probability $\mv{\Theta}^{Q}_n$ converges to a jointly Gaussian vector $\mv{\Theta}^{Q}$ with covariance matrix $\mv{\gS}^{Q}$, $\mv{\Theta}^{A}_n$ converges to jointly Gaussian vector $\mv{\Theta}^{A}$ with covariance matrix $\mv{\gS}^{A}$, moreover $\mv{\Theta}^{Q}$ and $\mv{\Theta}^{A}$ are independent of each other.
	Thus, $\mv{\Theta}^{Q}+\mv{\Theta}^{A}$ is a jointly Gaussian vector with covariance matrix
	$
		(\gs_{Q}^{2}+\gs_{A}^{2})\cdot \diag(t_{2}-t_{3},t_1-t_{2})
	$
	which verifies that the increments of $\widehat{\theta}$ are Gaussian with the correct covariance structure.\qed 


\section{Connection to Jacobi Operators}\label{sec:Jacobi}
The case where $H$ is a singleton is interesting as the monomer-dimer model becomes exactly solvable. To see this, note that the recurrence relation for the partition function may be written as
\begin{align*}
	Z_{n+1}=\exp({\nu_{n+1}})Z_n+\exp({\go}_{n,n+1})Z_{n-1}
\end{align*}
This is a three step recurrence, which means we may write it in determinant form. Let $\go_{n,n+1}$ be denoted as $\go_n$. Consider the following Jacobi matrix,
\begin{align}\label{def:Jacobi}
	\mv{A}_n:=
	\begin{pmatrix}
		\sqrt{-1}e^{\nu_1} & e^{ \go_1/2} & \hdots & 0 \\ e^{\go_1/2} & \ddots & \ddots & \vdots \\ \vdots & \ddots & \sqrt{-1}e^{\nu_{n-1}} & e^{\go_{n-1}/2} \\ 0 & \hdots & e^{\go_{n-1}/2} & \sqrt{-1}e^{\nu_n}
	\end{pmatrix}=\mv{\gO}_n +\mv{V}_n,
\end{align}
where $\mv{\gO}_n$ is the weighted adjacency matrix of the graph, and $\mv{V}_n$ is the diagonal part of $\mv{A}_n$ which may be interpreted as an on sit potential. Just by expanding the determinant along the last row we have that
\begin{align*}
	\det \mv{A}_{n+1}:=\sqrt{-1}e^{\nu_n}\det\mv{A}_{n-1}-e^{\go_n}\det\mv{A}_{n-1}
\end{align*}
It is easy to show that the phase of $\det \mv{A}_n$ is periodic; in fact we have that
\begin{align*}
	\arg \det\mv{A_n}=\exp\left(\sqrt{-1}\cdot {n\pi}/{2}\right) \text{ for all }n.
\end{align*}
One can easily check that $Z_n$ and $|\det\mv{A}_n|$ satisfy the same recurrence, with the same initial conditions. Thus, $Z=|\det \mv{A}_n|$. The determinant structure is very useful for explicit computation. For instance, the probability that vertex $k$ is unpaired is given by $e^{\nu_{k}}[\mv{A}_n^{-1}]_{k,k}$.
The gauge transformation as seen in Lemma~\ref{lem:gauge} can also be cast into matrix form, by means of rescaling the rows and coulumns. The transformation $\nu_i \to 0$, $\go_i\to \go_i-\nu_i$ and $\go_{i-1}\to \go_{i-1}-\nu_i$ maybe realized as
\begin{align}\label{def:matrixgauge}
	\mv{A}_n\to \mv{D}_n(i)\mv{A}_n\mv{D}_n(i),
\end{align}
where $\mv{D}_n(i)$ is a diagonal matrix with $[D_n(i)]_{jj}=1$ for all $j\neq i$ and $[D_n(i)]_{ii}=\exp(-\nu_i/2)$. Clearly the order in which the transformations are applied is irrelevant as the diagonal matrices commute with each other. The exponential tilting of the model can be implemented by multiplying only the diagonal entries of $\mv{A}_n$ by $e^x$. If we are tilting the model so as to shift the wight of vertex $i$ by a factor of $x$, this may be achieved via the linear transformation
\begin{align*}
	\mv{A}_n\to \mv{A}_n+\sqrt{-1}(e^x-1)\mv{\Pi}_i\mv{A}_n\mv{\Pi}_i
\end{align*}
where $\mv{\Pi}_i$ denotes the diagonal matrix with $[\mv{\Pi}_i]_{ii}=1$ and all other entries $0$. It is clear to see that the tilting operations and the gauge transformations commute with each other and thus the order in which they are applied is irrelevant. It is interesting to note that the gauge operations employed here may be regarded as a change of inner product so as to transform $\mv{A}_n$ to a normal matrix.
Now consider the tilted, gauge transformed matrix, which we denote as $\tilde{\mv{A}}_n(x)$. Explicitly, this is given by
\begin{align*}
	\tilde{\mv{A}}_n(x)
	=
	\begin{pmatrix}
		\sqrt{-1}e^x & e^{ \tilde{\go}_1/2} & \hdots & 0 \\
		e^{\tilde{\go}_1/2} & \ddots & \ddots & \vdots \\
		\vdots & \ddots & \sqrt{-1}e^x & e^{\tilde{\go}_{n-1}/2} \\
		0 & \hdots & e^{\tilde{\go}_{n-1}/2} & \sqrt{-1}e^x
	\end{pmatrix}
	=\tilde{\mv{\gO}}_n+\sqrt{-1}e^x\mv{I}_n.
\end{align*}
It is now immediately clear what the Lee-Yang zeroes of the partition function are, the $\gl$ are exactly the eigenvalues of $\tilde{\mv{\gO}}$. Our analysis can be rephrased in the language of spectral theory. In particular, the interlacing and boundedness of the Lee-Yang zeroes all have analogous forms in the study of eigenvalues of Jacobi matrices. The Gibbs average of the number of unpaired vertices may be given by an expression related to the resolvent of $\tilde{\mv\gO}_n$
\begin{align*}\la U\ra_n=e^{2x}\tr\left[(\tilde{\mv\gO}^{2}+e^{2x}\mv{I}_n)^{-1}\right].
\end{align*}
The limiting free energy of the monomer-dimer model can be related to the Lyapunov exponent corresponding to $\tilde{\mv{\gO}}_n$, denoted as $\gc(x)$, in a form analogous to the Thouless formula. We have that
\begin{align*}
	\lim_{n\to \infty} \frac{1}{n}\log Z_n=\gc(0)-\E\nu.
\end{align*}
Our central limit theorem for the free energy may also be interpreted as a central limit theorem for the Lyapunov exponent, and the central limit theorem for the number of unpaired vertices as that for the resolvent. For more details about the spectral theory, we refer to~\cite{GerT1}.

\section{Further Questions}\label{sec:disc}

\subsection{CLT for Determinants of Random Band Matrices}

It is not hard to adapt the methods for cylinder graphs for the so-called $h$-band graphs, \ie\ the line graph where vertices $i$ and $j$ are adjacent iff $|i-j|\le h$. As per Section~\ref{sec:Jacobi}, a natural question is if one can apply these methods to the \textit{band matrices}, weighted adjacency matrices of band graphs.
\begin{definition}
	An $N\times N$ matrix $\mv{A}$ is said to be a band matrix with band $h$ if $[\mv{A}]_{ij}=0$ for all $|i-j|>h$. In particular, Jacobi matrices are band matrices with band $1$.
\end{definition}

We do not have a similar combinatorial interpretation of the $h$--band determinant. However, it is interesting to see if one can identify the correct scaling and centering and then prove a Gaussian central limit theorem for the $\log$ determinant of $h$--band matrices, using the methods of Theorem~\ref{thm:Zclt}. The issue that arises is that the error control required will be far more subtle as the signs of the permutations have to be taken into account, a problem entirely bypassed in the tridiagonal case.

\subsection{Correlation Structure}

An important question yet to be answered is that of the correlation structure and the precise decay. We avoided analyzing the correlation by comparing our partition function to the product of smaller partition functions and showed that the error is small. From the study of random tridiagonal operators, especially in the context of Anderson Localization, the correlation structure of the matching at the local level can be carried out in the case of $|H|=1$. However, the matrix structure is absent in all other cases. Characterizing the correlation between unpaired vertices explicitly and analyzing the decay is an essential next step. Analysis of the $k$-point correlation structure would also help strengthen our result for Brownian motion convergence. We have proved convergence in the sense of finite-dimensional distributions; however, tightness at the process level remains open.

\subsection{Higher Dimensional Lattices}

Several of our results, most notably the variance bounds and the tightness of the Lee-Yang zeroes in the disordered setting, extend to more general classes of graphs. The bounded degree assumption is the only restriction. However, the subadditivity that we used to establish convergence of mean free energy and the unpaired vertex density is absent in any situation other than the 1-dimensional case described here. Any attempt to extend these results to higher-dimensional lattices such as $\dZ^2$ will require an alternative method to prove the convergence of the mean free energy. One method is to try and work with the $d$--dimensional box of side length $2^n$, which we denote as $B_n$, and show that $\log Z_{B_n}$ satisfies a subadditivity condition. However, the error control is highly non-trivial in this case, as using a bound like ours results in errors of the same order as that of the partition function.

%

\vskip.1in
\noindent{\bf Acknowledgments.}
We would like to thank Felix Christian Clemen, Gayana Jayasinghe, Grigory Terlov and Qiang Wu for enlightening discussions.

\bibliography{1dm.bib}
\bibliographystyle{plain}
\end{document}